\documentclass[microtype]{gtpart}
\usepackage{latexsym}
\usepackage{amsmath,amsthm,amsfonts,amssymb}
\usepackage[T1]{fontenc}
\usepackage[latin1]{inputenc}
\usepackage{aeguill}
\usepackage{url}
\usepackage{enumerate}
\usepackage{mdwlist}
\usepackage[a4paper,textwidth=15cm,textheight=25cm]{geometry}
\usepackage{xspace,color}
\usepackage{export}
\usepackage{hyperref}

%
\usepackage{pinlabel}
\usepackage[all]{xy}


\title{Residual properties of automorphism groups   of (relatively) hyperbolic groups}

%
\author{Gilbert Levitt}
\givenname{Gilbert}
\surname{Levitt}
\address{Laboratoire de Math\'ematiques Nicolas Oresme\\
Universit\'e de Caen et CNRS (UMR 6139)\\
\newline BP 5186\\
14032 CAEN Cedex 5\\
France}
\email{levitt@unicaen.fr}

%
\author{Ashot Minasyan}
\address{Mathematical Sciences \\ University of Southampton
\\ Highfiled Southampton SO17 1BJ \\ UK}
\email{aminasyan@gmail.com}

\keyword{relatively hyperbolic groups}
\keyword{outer automorphism groups}
\keyword{residually finite}
\subject{primary}{msc2000}{20F67}
\subject{primary}{msc2000}{20F28}
\subject{secondary}{msc2000}{20E26}


%
\newtheorem{thm}{Theorem}[section]
\newtheorem{cor}[thm]{Corollary}
\newtheorem{prop}[thm]{Proposition}
\newtheorem{lem}[thm]{Lemma}
\newtheorem{df}[thm]{Definition}
\newtheorem{as}[thm]{Assumption}
\theoremstyle{remark}

\newtheorem*{algo*}{Algorithm}
\newtheorem*{rem*}{Remark}
\newtheorem{rem}[thm]{Remark}
\newtheorem*{example*}{Example}
\newtheorem{example}[thm]{Example}

\newcommand{\es}{\emptyset}
\newcommand{\m} {^{-1}}
\newcommand {\calb} {{\mathcal {B}}}
\newcommand {\calc} {{\mathcal {C}}}
\newcommand {\cale} {{\mathcal {E}}}
\newcommand {\calh} {{\mathcal {H}}}
\newcommand {\calp} {{\mathcal {P}}}
\newcommand {\calq} {{\mathcal {Q}}}
\newcommand {\calt} {{\mathcal {T}}}
\newcommand{\Out} {{\mathrm{Out}}}
\newcommand{\Aut} {{\mathrm{Aut}}}
\newcommand{\Inn} {{\mathrm{Inn}}}
\newcommand {\N} {{\mathbb {N}}}
\renewcommand {\Z} {{\mathbb {Z}}}
\newcommand{\inc}{\subset}
\newcommand {\Hy} {{\mathbb {H}}}
\renewcommand{\O}{\mathcal{O}}
\newcommand{\G}{G}
\newcommand{\llangle}{\langle\hspace{-2.5pt}\langle}
\newcommand{\rrangle}{\rangle\hspace{-2.5pt}\rangle}
\newcommand{\Inc}{\mathrm{Inc}}


\begin{document}
\begin{abstract}
We show that $\Out(G)$ is residually finite if $G$ is one-ended and hyperbolic relative to virtually polycyclic subgroups. More generally, if $G$ is one-ended and hyperbolic relative to proper residually finite subgroups,  the group of outer automorphisms preserving the peripheral structure is
residually finite. We also show that $\Out(G)$ is virtually residually $p$-finite for every prime $p$ if $G$ is one-ended and  toral relatively hyperbolic, or infinitely-ended and virtually residually $p$-finite.
\end{abstract}

\maketitle

\section{Introduction}
A group $G$  is \emph{residually finite} if, given any $g\ne1$, there exists a homomorphism $\varphi$ from $G $ to a finite group such that $\varphi(g)\ne 1$.
Residual finiteness is an important property of groups.   It is equivalent to the statement that $G$ embeds into its profinite completion. Well known theorems of Mal'cev claim that finitely generated residually finite groups
are Hopfian, and finitely presented residually finite groups have solvable word problem.
Many groups are known to be residually finite (in particular,  finitely generated linear groups), but it is a big open question whether all (Gromov) hyperbolic groups are residually finite.

It is a standard and classical fact \cite {GBaumslag-63} that the automorphism group $\Aut(G)$ is residually finite if $G$ is finitely generated  and residually finite,
but this is not true for the outer automorphism group $\Out(G)$. Indeed, any finitely presented group may be represented as $\Out(G)$ with $G$ finitely generated  and  residually finite \cite{BW}.

A special case of our main theorem is:

\begin{cor}\label{main1}
If $G$ is  one-ended, and hyperbolic relative to a   family $\calp=\{P_1,\dots, P_k\}$ of virtually polycyclic groups, then $\Out(G)$ is residually finite.
\end{cor}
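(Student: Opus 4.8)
The plan is to deduce the corollary from the main theorem announced in the abstract: if $G$ is one-ended and hyperbolic relative to a family of proper residually finite subgroups, then the group $\Out(G;\mathcal{P})$ of outer automorphisms preserving the peripheral structure is residually finite. The corollary follows once we (i) put ourselves in the situation where the main theorem applies, and (ii) check that in the present setting $\Out(G;\mathcal{P})$ is, up to finite index, all of $\Out(G)$.

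First I would normalise the peripheral family. Virtually polycyclic groups are residually finite (Hirsch, Mal'cev), so the residual finiteness hypothesis is automatic. We may assume each $P_i$ is a proper subgroup of $G$: if some $P_i = G$ then $G$ is itself virtually polycyclic, a case belonging to the classical theory of polycyclic groups. A non-virtually-cyclic virtually polycyclic group is never properly relatively hyperbolic --- for instance its asymptotic cones have no cut points (Drutu--Sapir, Behrstock--Drutu--Mosher), or, dynamically, an amenable geometrically finite convergence group is virtually cyclic or parabolic. Hence, using Osin's theorem that finite and virtually cyclic members may be deleted from a peripheral family without destroying relative hyperbolicity, I would pass to the subfamily $\mathcal{P}' \subseteq \mathcal{P}$ of its infinite, non-virtually-cyclic members (possibly empty). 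Then $G$ is still one-ended and hyperbolic relative to the family $\mathcal{P}'$ of proper residually finite subgroups, so the main theorem gives that $\Out(G;\mathcal{P}')$ is residually finite.

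Next I would argue that $\Out(G;\mathcal{P}') = \Out(G)$, or at any rate has finite index in it. Every member of $\mathcal{P}'$ is virtually polycyclic of Hirsch length at least $2$, hence not properly relatively hyperbolic by the criterion above; combined with the facts that $G$ is one-ended (so in particular not virtually cyclic, and not splitting over a finite subgroup) and $G \notin \mathcal{P}'$, this makes the relatively hyperbolic structure $(G,\mathcal{P}')$ canonical. Concretely, every automorphism of $G$ carries maximal parabolic subgroups to maximal parabolic subgroups, hence permutes the conjugacy classes of the members of $\mathcal{P}'$; so every outer automorphism preserves the peripheral structure $\mathcal{P}'$, i.e. $\Out(G;\mathcal{P}') = \Out(G)$, and the latter is residually finite. (If the convention builds into $\Out(G;\mathcal{P}')$ that each conjugacy class be fixed individually, the above only shows that $\Out(G;\mathcal{P}')$ has finite index in $\Out(G)$; one then concludes using that $\Out(G)$ is finitely generated --- which comes from the canonical JSJ-type decomposition of the one-ended relatively hyperbolic group $G$ underlying the main theorem --- together with the standard fact that a finitely generated group possessing a residually finite normal subgroup of finite index is itself residually finite.)

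All the real work sits in the main theorem. Within this corollary the only genuine step is the canonicity of the peripheral structure, and its key input --- which is also what lets $\mathcal{P}$ be replaced by the canonical $\mathcal{P}'$ without changing $G$ or $\Out(G)$ --- is the fact that non-virtually-cyclic virtually polycyclic groups are not properly relatively hyperbolic, together with Osin's reduction. I expect pinning down the precise form of this canonicity statement to be the main thing requiring attention.
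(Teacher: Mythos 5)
Your proposal follows essentially the same route as the paper's proof: drop the virtually cyclic members of $\calp$ (Osin's theorem), dispose of the case $G$ virtually polycyclic by appealing to Wehrfritz's result, apply Theorem~\ref{gener} to conclude that $\Out(G;\calp)$ is residually finite, and then use the non-relative-hyperbolicity (NRH) of the remaining $P_i$ to show that $\Out(G;\calp)$ has finite index in $\Out(G)$ (the paper cites \cite[Lemma~3.2]{M-O-fixed_sbgp} for this, characterising the $P_i$ up to conjugacy as maximal virtually polycyclic non-virtually-cyclic subgroups). One small correction: your parenthetical worry in the last step is a red herring. You do not need $\Out(G)$ to be finitely generated, nor do you need $\Out(G;\calp)$ to be normal: any group containing a residually finite subgroup of finite index is residually finite, because any finite-index subgroup of any group contains a finite-index normal subgroup (its normal core, which has index dividing $n!$), and residual finiteness passes from a finite-index normal subgroup to the whole group unconditionally.
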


This is new even if $G$ is a  torsion-free hyperbolic group. Work by Sela implies that a finite index subgroup of $\Out(G)$ is   virtually a central extension of
a free abelian group by a direct product of mapping class groups (see \cite{GD}). Though mapping class groups are known to be residually finite
following work by Grossman based on conjugacy separability \cite{A-K-T, Grossman}, this is not enough to deduce residual finiteness of $\Out(G)$
because the extension may
fail to split (i.e., be a direct product); see the example discussed  in the introduction of \cite{GD}.

To complement Corollary \ref{main1}, recall that $\Out(G)$ is residually finite if $G$ is residually finite and has infinitely many ends \cite{M-O-norm_aut}.
On the other hand,  $\Out(G*F_2)$ contains $G$ (with $F_2$   the free group of rank $2$),
 so it is not residually finite if $G$ is not. Thus
one-endedness  cannot be dispensed with in Corollary \ref{main1}.   This also gives a direct way of proving the following fact, which may otherwise be obtained by combining small cancellation
theory over hyperbolic groups with the results from \cite{M-O-norm_aut}.

\medskip
\begin{prop}\label{prop:equiv}  The following are equivalent:
\begin{itemize}
  \item[(i)] Every hyperbolic group is residually finite.

  \item[(ii)] For every hyperbolic group $G$, the group $\Out(G)$ is residually finite.
\end{itemize}
\end{prop}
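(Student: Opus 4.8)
The plan is to establish the two implications separately; the genuine content is supplied by Corollary~\ref{main1} and by the result of \cite{M-O-norm_aut} on infinitely-ended groups recalled above, so the argument is mostly a matter of assembling these with the observations already made in the introduction.

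\emph{Proof of (ii) $\Rightarrow$ (i).} Let $G$ be an arbitrary hyperbolic group. Then $G*F_2$ is again hyperbolic, since a free product of finitely many hyperbolic groups is hyperbolic; so by (ii) the group $\Out(G*F_2)$ is residually finite. As noted above, $\Out(G*F_2)$ contains a copy of $G$: one may realise $g\in G$ by the automorphism of $G*\langle a,b\rangle$ fixing $G$ pointwise, fixing $b$, and sending $a\mapsto ag$, and this automorphism is non-inner when $g\ne1$ because no nontrivial element of $G*F_2$ commutes with $b$ and with all of $G$. Since subgroups of residually finite groups are residually finite, $G$ is residually finite, and as $G$ was arbitrary, (i) follows.

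\emph{Proof of (i) $\Rightarrow$ (ii).} Let $G$ be a hyperbolic group, and argue according to the number of ends of $G$. If $G$ is finite or two-ended, then $\Out(G)$ is finite by elementary considerations about finite and virtually cyclic groups, hence residually finite. If $G$ is one-ended, then $G$ is hyperbolic relative to the empty family of peripheral subgroups, which is vacuously a family of virtually polycyclic groups, so Corollary~\ref{main1} applies and $\Out(G)$ is residually finite. Finally, if $G$ is infinitely-ended, then by hypothesis (i) the hyperbolic group $G$ is residually finite, so $\Out(G)$ is residually finite by the result of \cite{M-O-norm_aut} recalled in the introduction. In every case $\Out(G)$ is residually finite, which is (ii).

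I expect no serious obstacle: the heavy lifting has already been done in Corollary~\ref{main1} and in \cite{M-O-norm_aut}, and the proposition is essentially a bookkeeping argument that splits the class of hyperbolic groups by number of ends. The only points that need a little care are the two degenerate cases (finite and two-ended), where neither main result applies but where $\Out(G)$ is finite by classical facts, and the verification that the embedded copy of $G$ in $\Out(G*F_2)$ really consists of outer automorphisms — which is exactly where the two free generators of $F_2$ are used.
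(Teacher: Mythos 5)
Your proof is correct and is precisely the ``direct way'' the paper alludes to just before stating the proposition: (ii)$\Rightarrow$(i) via the embedding of $G$ into $\Out(G*F_2)$, and (i)$\Rightarrow$(ii) by splitting on the number of ends, invoking Corollary~\ref{main1} with empty peripheral family in the one-ended case and \cite{M-O-norm_aut} in the infinitely-ended case.
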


Virtual polycyclicity of the $P_i$'s is used in two ways in Corollary \ref{main1} (see  Subsection~\ref{pfcor}).   It ensures that $P_i$ is residually finite, and also  that automorphisms of
$G$ respect the peripheral
structure: $P_i$ is mapped to a conjugate of a $P_j$ (this only holds if no $P_i$ is virtually cyclic, but such a restriction causes no loss of generality, see Subsection \ref{defrhyp}).
In fact, the peripheral  structure is preserved  if every $P_i$ is small
(i.e., it does not contain the free group $F_2$) or, more generally,  NRH
(non relatively hyperbolic) -- see \cite{M-O-fixed_sbgp}
for a proof and a list of  examples of NRH groups.

 Since the peripheral structure is not always preserved,
we restrict to the subgroup $\Out(G;\calp)$ of $\Out(G)$ defined  as the group of classes of automorphisms mapping each $P_i$ to a conjugate.

 \begin{thm}
 \label{gener}
 Let $G$ be a group hyperbolic relative to  a family of proper finitely generated subgroups $\calp=\{P_1,\dots,P_k\}$.
 If $G$ is one-ended relative to $\calp$, and every $P_i$ is residually finite, then $\Out(G; \calp)$ is residually finite.
 \end{thm}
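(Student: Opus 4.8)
The plan is to run everything through the canonical elementary JSJ decomposition of $G$ relative to $\calp$, as developed by Guirardel and Levitt. Since $G$ is one-ended relative to $\calp$, it has no splitting over a finite group in which the $P_i$ are elliptic, so this JSJ only involves infinite elementary (i.e.\ virtually cyclic or parabolic) edge groups; moreover it can be taken to be \emph{canonical} — for instance as the tree of cylinders of the elementary JSJ deformation space relative to $\calp$ — so that, since $\Out(G;\calp)$ preserves the parabolic structure, it acts on the Bass--Serre tree $T$ of this decomposition. (If there is no such splitting at all, $\Out(G;\calp)$ is already finite and there is nothing to prove.) The quotient graph of groups $\Gamma=G\backslash T$ is finite, so the kernel $\Out^1\le \Out(G;\calp)$ of the induced action on the finite graph $\Gamma$ — equivalently, the classes fixing every vertex group and every edge group up to conjugacy — has finite index; as residual finiteness passes between a group and a finite-index subgroup, it suffices to show that $\Out^1$ is residually finite.

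First I would dispose of the simpler quotient. For a group of outer automorphisms acting trivially on the graph there is the standard exact sequence of Guirardel--Levitt
\[ 1 \longrightarrow \calt \longrightarrow \Out^1 \stackrel{\rho}{\longrightarrow} \prod_{v\in V(\Gamma)} \Out\bigl(G_v;\Inc_v\bigr), \]
where $\Inc_v$ is the finite family of $G_v$-conjugacy classes of incident edge groups and $\calt$ is the group of twists. I would check that $\operatorname{im}\rho\cong \Out^1/\calt$ is residually finite by examining the three types of vertices of the canonical JSJ: a rigid vertex group has \emph{finite} relative outer automorphism group $\Out(G_v;\Inc_v)$ (this is where one-endedness relative to $\calp$ is used, through the rigidity theorems for relatively hyperbolic groups); a quadratically hanging (surface-type) vertex contributes a mapping class group of a compact surface with boundary, which is residually finite by Grossman's theorem; and a peripheral vertex group is conjugate to some $P_i$, for which $\Out(P_i;\Inc_v)$ is residually finite because $P_i$ is finitely generated and residually finite — so $\Aut(P_i)$ is residually finite by Baumslag's theorem — and the incident edge groups are infinite, which one uses to keep control after quotienting out the inner automorphisms. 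Being a subgroup of a finite product of residually finite groups, $\operatorname{im}\rho$ is residually finite.

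The main obstacle is the kernel $\calt$ and the way it sits inside $\Out^1$: the displayed extension need \emph{not} split — this is precisely the phenomenon behind the example cited in the introduction — and $\calt$, a quotient of a product of twist subgroups, is not a priori residually finite, so residual finiteness of $\Out^1/\calt$ does not by itself suffice. My plan to get past this is to construct a single homomorphism $f\colon\Out^1\to R$ into a residually finite group $R$ whose restriction to $\calt$ is \emph{injective}. This is where residual finiteness of the $P_i$ (and not merely of the mapping class groups) really enters: a twist along an edge $e$ is encoded by an element of the centralizer, inside an adjacent vertex group, of the edge group, and in the tree of cylinders these adjacent groups are either maximal virtually cyclic (hence polycyclic, so residually finite) or contained in some $P_i$ (residually finite by hypothesis), so one can try to assemble compatible finite-index quotients of the vertex and edge groups into an $\Out^1$-equivariant map $f$ that sees every nontrivial twist. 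Granting such an $f$, residual finiteness of $\Out^1$ is immediate: a nontrivial class outside $\calt$ is detected by a finite quotient through $\rho$ and the residual finiteness of $\operatorname{im}\rho$, while a nontrivial class in $\calt$ is detected through $f$ and the residual finiteness of $R$. Producing $f$ coherently — i.e.\ showing that the non-split part of $\Out^1$ is nevertheless visible in finite quotients — is the heart of the matter and the step I expect to require the most work.
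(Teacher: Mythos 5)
Your high-level plan — pass to the canonical elementary JSJ relative to $\calp$, restrict to a finite-index subgroup $\Out^1$ acting trivially on the graph, and split the problem between the twist group $\calt$ and the image under $\rho$ — is indeed the scaffolding the paper uses (Sections 4 and 5), and you correctly diagnose that the non-split extension is the crux. But there are two genuine gaps.

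The first is that your claimed residual finiteness of $\operatorname{im}\rho$ does not hold as stated. For a peripheral vertex $v$ with $G_v$ conjugate to $P_i$, Baumslag's theorem gives residual finiteness of $\Aut(P_i)$, but $\Out(P_i;\Inc_v)$ is a subgroup of the \emph{quotient} $\Out(P_i)=\Aut(P_i)/\Inn(P_i)$, and this quotient need not be residually finite even when $P_i$ is — that is precisely the $\Aut$-versus-$\Out$ distinction the paper flags in the introduction (via Bumagin--Wise, any finitely presented group arises as $\Out$ of a finitely generated residually finite group). Your remark about ``keeping control'' via the infinite incident edge groups is not a proof and I do not see how to make it one; there is no reason for $\Inn(P_i)$ to be separable in $\Aut(P_i;\Inc_v)$. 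The paper sidesteps this entirely by never trying to make $PMCG(G_v)\leqslant\Out(G_v)$ residually finite for elementary $v$: it works with the groups $PMCG^\partial(G_v)$, which \emph{are} residually finite for finitely generated residually finite $G_v$ (this is Lemma \ref{rfd} and is the reason for introducing $\partial$ in the first place), and shows that the kernel of the epimorphism $\lambda:\prod PMCG^\partial(G_v)\to\Out^r(G)$ is separable.

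The second, which you correctly identify as ``the heart of the matter and the step I expect to require the most work,'' is the actual construction of a residually finite target that sees all the twists. Leaving this as a plan means the proof is not yet a proof, and the mechanism the paper uses is substantive and not visible from your outline. Concretely: the paper first replaces each rigid vertex group $G_v$ by the free amalgam $G'_v=\bigl(*_{e\in E_v}G_e\bigr)*\bigl(\Z\times Z(G_v)\bigr)$ over $Z(G_v)$, which preserves the centralizers $Z_{G_v}(G_e)$ and hence the twist group, while producing a residually finite group $G'$ with infinitely many ends; then $\Out^r(G)$ embeds into $\Out(G')$, and \cite{M-O-norm_aut} applies. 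When there are no rigid vertices, a further argument via Dehn fillings (Theorem \ref{thm:periph_fil}) together with conjugacy separability of finitary fillings of the QH vertex groups and Grossman's method (Lemmas \ref{lem:cs->rf_out}, \ref{lem:comm_aut_def}, \ref{lem:non-comm}) is needed; and the trivial JSJ case (rigid $G$, or $G$ finite-by-surface) must also be handled separately. None of these ingredients is ``assemble compatible finite-index quotients of the vertex and edge groups'' in the naive sense, and the finitary-filling step in particular requires the peripheral filling theorem of Osin to know that the amalgamation structure survives in the quotient.
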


Being \emph{one-ended relative to $\calp$} means that  $G$ does not split over a finite group with each $P_i$ contained in a vertex group (up to conjugacy). This is   weaker than having at most one end.

 If every $P_i$ is NRH, then $\Out(G; \calp)$ has finite index in $\Out(G)$, so  we deduce residual finiteness of $\Out(G)$.

The following example shows that it is indeed necessary to assume that all peripheral
subgroups $P_i$ are residually finite  in Theorem \ref{gener}.

 \begin{example}
Let $H$ be a torsion-free non-residually finite group  with trivial center (such as the Baumslag-Solitar group $BS(2,3)$). Let $K$ be a one-ended torsion-free
hyperbolic group (e.g., the fundamental group of a closed
hyperbolic surface), and let $\langle k\rangle\leqslant K$ be a maximal cyclic subgroup.
Let $G$ be the amalgamated product $(H\times\langle k\rangle)*_{\langle k\rangle}K$.
Then $G$ is one-ended, torsion-free, and hyperbolic relative to $\calp=\{H\times \langle k \rangle\}$ (see \cite{Osin-comb}). Non-trivial elements   $h \in H$ define twist
automorphisms (they act as conjugation by $h$ on $H\times \langle k \rangle$
and trivially on $K$), which give rise to non-trivial outer automorphisms in $\Out(G;\calp)$ because $H$ has trivial center.
Thus $H$ can be embedded into $\Out(G;\calp)$, and so $\Out(G;\calp)$ is not residually finite.
\end{example}

In the last section of the paper we consider residual $p$-finiteness. If $p$ is a prime,   $G$  is \emph{residually $p$-finite} if, given any   non-trivial element $g \in G$,
there exists a homomorphism $\varphi$ from $G $ to a finite $p$-group such that $\varphi(g)\ne 1$.
A group is \emph{virtually  residually $p$-finite} if some finite index subgroup is residually $p$-finite. Evidently residual $p$-finiteness implies residual nilpotence.
And if a group is virtually residually $p$-finite for at least two distinct primes $p$, then it is virtually torsion-free.

It is well-known that free groups are residually $p$-finite for any prime $p$, and
it is a classical result that a finitely generated linear group is residually $p$-finite for all but finitely many $p$'s (cf. \cite{Wehr-book}).
Lubotzky \cite{Lub} proved that for a finitely generated virtually residually $p$-finite group $G$, the group $\Aut(G)$ is also virtually residually $p$-finite, which is a natural analogue of Baumslag's result \cite{GBaumslag-63}.
Another theorem from \cite{Lub} states that, if $F$ is a free group of finite rank, then $\Out(F)$ is virtually residually $p$-finite for any prime $p$. The latter result was later extended by Paris \cite{Paris} to
fundamental groups of compact oriented surfaces. The next two theorems generalize these results to certain relatively hyperbolic groups:

 \begin{thm} \label{resp}
If $G$ is a one-ended toral relatively hyperbolic group, then $\Out(G)$ is virtually residually $p$-finite for every
prime number $p$.
\end{thm}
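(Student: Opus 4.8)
The plan is to run the argument for Theorem~\ref{gener} while keeping track of the prime~$p$. Since $G$ is torsion-free and each peripheral subgroup $P_i$ is finitely generated abelian, hence NRH, the peripheral structure is preserved by $\Aut(G)$, so $\Out(G;\calp)$ has finite index in $\Out(G)$; as virtual residual $p$-finiteness is inherited by and from finite-index subgroups, it suffices to treat $\Out(G;\calp)$. Being one-ended, $G$ is one-ended relative to $\calp$, so it admits a canonical (hence $\Out(G;\calp)$-invariant) JSJ decomposition $\Gamma$ over abelian subgroups relative to $\calp$; its edge groups are finitely generated abelian, and its vertex groups are of three types: rigid, quadratically hanging (with underlying compact surface $\Sigma_v$), and abelian ($\cong\Z^{n_v}$). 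Replacing $\Out(G;\calp)$ by the finite-index subgroup $\Out^1$ acting trivially on $\Gamma$ (fixing every vertex and edge and acting trivially on every edge group), the standard exact sequence for automorphisms of a graph of groups becomes
\[
1 \longrightarrow \calt \longrightarrow \Out^1 \xrightarrow{\ \ \rho\ \ } \prod_{v} \Out(G_v;\Inc_v),
\]
where $\calt$ is the group of twists and $\Inc_v$ is the family of incident edge groups together with the peripheral structure induced on $G_v$. In the toral case $\calt$ is finitely generated free abelian (edge groups and their centralizers in the incident vertex groups are finitely generated abelian, and $G$ is torsion-free); $\Out(G_v;\Inc_v)$ is \emph{finite} when $v$ is rigid (so, crucially, the rigid vertex groups --- which may carry all of the potential pathology of $G$ --- contribute nothing after passing to finite index); it is commensurable with the mapping class group of $\Sigma_v$ when $v$ is quadratically hanging; and it is commensurable with an arithmetic subgroup of $GL_{n_v}(\Z)$ when $v$ is abelian.

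Next I would check that each surviving factor is virtually residually $p$-finite for every prime~$p$. The twist group $\calt\cong\Z^N$ is residually $p$-finite for every~$p$. The mapping class group of a compact orientable surface is virtually residually $p$-finite for every~$p$ by Paris~\cite{Paris} (extending Lubotzky's theorem~\cite{Lub} on $\Out(F_n)$ and the surface-group case). An arithmetic subgroup of $GL_n(\Z)$ contains with finite index the principal congruence subgroup $\Gamma(p)=\ker(GL_n(\Z)\to GL_n(\Z/p))$, which is residually $p$-finite (its congruence filtration has elementary abelian $p$-quotients). A finite direct product of such groups is again virtually residually $p$-finite, so after one further passage to a finite-index subgroup we are reduced to an extension
\[
1 \longrightarrow \Z^N \longrightarrow E \longrightarrow Q \longrightarrow 1 ,
\qquad Q \ \text{residually $p$-finite},
\]
with $E\leqslant\Out^1$ of finite index, and we must show that $E$ is virtually residually $p$-finite.

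This last step is where I expect the real difficulty to lie, since an extension of a residually $p$-finite group by a finitely generated residually $p$-finite group is in general not even virtually residually finite (the same obstruction prevents one from deducing Corollary~\ref{main1} directly from Sela's structure theorem; compare the Deligne-type central extensions of arithmetic groups). To get around it I would use extra structure of the twist extension. First pass to a further finite-index subgroup so that the conjugation action $Q\to GL_N(\Z)=\Aut(\Z^N)$ takes values in $\Gamma(p)$; then $[\Z^N,E]\subseteq p\Z^N$, so $\Z^N$ is ``almost central'' in $E$, and the action of $Q$ on each finite quotient $\Z^N/p^k\Z^N$ factors through a finite $p$-group. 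One then studies the lower $p$-central series $(\gamma_i^p(E))_i$, whose quotients $E/\gamma_i^p(E)$ are finite $p$-groups since $E$ is finitely generated; because $Q$ is residually $p$-finite one gets $\bigcap_i\gamma_i^p(E)\subseteq\Z^N$, and the remaining point is to show $\bigcap_i(\gamma_i^p(E)\cap\Z^N)=1$, equivalently $\gamma_i^p(E)\cap\Z^N\subseteq p^{i-1}\Z^N$ for all~$i$. Granting $[\Z^N,E]\subseteq p\Z^N$, this reduces to controlling the ``transgression'' contribution to $\gamma_2^p(E)\cap\Z^N$ coming from commutators of commuting lifts and from $p$-th powers, i.e.\ to showing that the relevant obstruction --- essentially the reduction mod~$p$ of the extension class of $E$ over~$Q$ --- vanishes after a suitable finite-index passage; this structural fact about twist automorphisms (their extension class is ``geometric'' and virtually tame) is what I regard as the technical heart of the argument. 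Once it is in place, $E$ has a finite-index subgroup that embeds into its pro-$p$ completion, hence is residually $p$-finite, and therefore $\Out(G)$ is virtually residually $p$-finite for every~$p$.
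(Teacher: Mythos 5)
Your proposal correctly identifies the JSJ decomposition, the finite image on rigid vertices, the contribution of surface and abelian vertices, and above all the genuine difficulty: the twist extension
\[
1 \longrightarrow \Z^N \longrightarrow E \longrightarrow Q \longrightarrow 1 ,
\]
which is exactly the obstacle that prevents a direct deduction from Sela's structure theorem. But at that point your argument stops being a proof: you write that showing $\gamma_i^p(E)\cap\Z^N\subseteq p^{i-1}\Z^N$ amounts to showing that ``the extension class \dots is `geometric' and virtually tame,'' call it ``the technical heart,'' and then conclude ``once it is in place \dots.'' That is precisely the step that needs an idea, and you have not supplied one. There is no general principle making a $\Z^N$-by-(residually $p$-finite) extension virtually residually $p$-finite, even when the conjugation action is congruence-trivial, so this really is a gap.

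The paper does not attempt to analyze the extension $1\to\calt\to\Out^1\to\prod_v\Out(G_v;\Inc_v)\to1$ at all. Instead it refuses to split the twists off from the vertex automorphisms: it works with the groups $PMCG_p^\partial(G_v)$, which package the action on $G_v$ together with the twist data, writes $\Out_p^r(G)$ as a quotient $P/Z$ with $P=\prod_{v\in V_E\cup V_{QH}}PMCG_p^\partial(G_v)$ and $Z=\ker\lambda_{E,QH}\leqslant\prod_v\calt_v$, and then needs only that $Z$ is pro-$p$ closed in $P$ (Lemma~\ref{alg2}). The decisive geometric input that your proposal lacks is Lemma~\ref{omcg} and its $p$-version Lemma~\ref{lem:omcg2}: if $G_v$ is a QH surface group, then for $n$ a large power of $p$ the quotient $PMCG_p^\partial(G_v)/n\calt_v$ is residually $p$-finite. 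This is proved by filling each boundary component of the surface $\Sigma_v$ by a cone point of order $n$; the resulting closed orbifold is hyperbolic (for $n$ large), its fundamental group $O_n$ is residually $p$-finite (Lemma~\ref{lem:O_n-resp}), has trivial center, and the induced map kills exactly $n\calt_v$ on the twist factor, so any nontrivial element of $PMCG_p^\partial(G_v)/n\calt_v$ survives either in $PMCG_p(G_v)$ (residually $p$-finite by Paris) or in $PMCG_p^\partial(O_n)$. A parallel and easier computation handles the abelian vertices (Lemma~\ref{lem:omcg3}). This ``close the surface into an orbifold'' device is what replaces the missing control of your extension class, and without it (or something equivalent) your argument cannot be completed.
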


Recall that $G$ is called \emph{toral relatively hyperbolic} if it is torsion-free and hyperbolic relative to a finite family of finitely generated abelian groups.
The theorem also applies to groups containing a one-ended toral relatively hyperbolic group with finite index, in particular to virtually torsion-free hyperbolic groups (see Theorem \ref{thm:virt_rp}).

The following statement is a counterpart of Theorem \ref{resp} in the case when $G$ has infinitely many ends, giving an `outer' version of Lubotzky's result \cite{Lub} mentioned above. It
is a natural pro-$p$ analogue of \cite[Thm.\ 1.5]{M-O-norm_aut}.
\begin{thm}\label{thm:resp-inf_ends}
If $G$ is a finitely generated group with infinitely many ends and $G$ is virtually residually $p$-finite for some prime number $p$, then $\Out(G)$ is virtually residually $p$-finite.
\end{thm}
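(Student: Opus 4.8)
The plan is to follow the proof of \cite[Thm.\ 1.5]{M-O-norm_aut}, systematically upgrading ``finite quotient'' to ``finite $p$-group quotient'' and using Lubotzky's results from \cite{Lub} in place of Baumslag's theorem \cite{GBaumslag-63}.

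By Dunwoody's accessibility theorem, $G$ is the fundamental group of a finite graph of groups $\Lambda$ with finite edge groups and finitely generated vertex groups, each of which is finite or one-ended; write $G_1,\dots,G_n$ for the one-ended vertex groups (the case $n=0$, where $G$ is virtually free, is allowed). Since $G$ has infinitely many ends, $\Lambda$ is nontrivial, so $G$ is \emph{not} one-ended relative to $\{G_1,\dots,G_n\}$ and Theorem \ref{gener} does not apply; it is the infinitely-many-ends hypothesis that must carry the argument. The conjugacy classes $[G_1],\dots,[G_n]$ are exactly the conjugacy classes of maximal one-ended subgroups of $G$, hence independent of $\Lambda$, so $\Aut(G)$ permutes them; passing to a finite-index subgroup it suffices to prove that the subgroup $\Out_0(G)\leqslant\Out(G)$ preserving each $[G_i]$ is virtually residually $p$-finite. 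Note that one cannot reduce to trivial edge groups: a finitely generated group that is virtually residually $p$-finite for a single prime, such as $\Z/p\Z * \Z/p\Z$, may contain torsion, so $G$ need not be virtually torsion-free.

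Each $G_i$ is a finitely generated subgroup of $G$, hence finitely generated and --- since virtual residual $p$-finiteness passes to subgroups --- virtually residually $p$-finite, so Lubotzky's theorem makes $\Aut(G_i)$ virtually residually $p$-finite; likewise $\Out(F_r)$ is virtually residually $p$-finite for a free group $F_r$ of finite rank, again by \cite{Lub}. Applying the same theorem to $G$ itself, $\Aut(G)$ is virtually residually $p$-finite, so we may fix a finite-index normal subgroup $A_0\trianglelefteq\Aut(G)$ that is residually $p$-finite. The image of $A_0$ in $\Out(G)$ is a finite-index subgroup isomorphic to $A_0/(A_0\cap\Inn(G))$, which is residually $p$-finite if and only if $A_0\cap\Inn(G)$ is closed in the pro-$p$ topology of $A_0$; hence it is enough to establish this closedness. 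This is the core of the proof: using the structure theory of automorphisms of graphs of groups with finite edge stabilisers --- the theory of Fouxe-Rabinovitch and McCool in the form developed by Levitt and by Guirardel--Levitt, applied to the action of $\Out(G)$ on the Stallings--Dunwoody deformation space --- one shows that an automorphism lying in $A_0$ is determined, modulo $A_0\cap\Inn(G)$, by finitely many pieces of data: its restrictions to the $G_i$ (controlled by the groups $\Aut(G_i)$), the induced finite symmetries of $\Lambda$ and induced automorphisms of the finite edge groups, the twists along edges (controlled by centralisers of finite subgroups inside the $G_i$), and its effect on the underlying graph (controlled by a subgroup of $\Out(F_r)$). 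In each of these coordinates the inner automorphisms form a pro-$p$-closed subset, so, there being finitely many coordinates, $A_0\cap\Inn(G)$ is closed in the pro-$p$ topology of $A_0$, and the theorem follows.

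I expect the main obstacle to be exactly this last step, which has two faces. First, since the edge groups, though finite, are in general nontrivial, the combinatorial bookkeeping must be done in the presence of torsion: one has to control the ambiguity coming from the automorphisms of the finite edge groups and from the twists along edges, and in particular confirm that the contribution of each one-ended vertex is governed by $\Aut(G_i)$ --- whose relevant congruence subgroups are residually $p$-finite by Lubotzky --- rather than by $\Out(G_i)$, which in general need not even be residually finite. Second --- and this is the genuinely new point compared with the residual-finiteness statement \cite[Thm.\ 1.5]{M-O-norm_aut} --- one must achieve the separation of $\Inn(G)$ using finite $p$-group quotients, not merely arbitrary finite quotients. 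Since an extension of a virtually residually $p$-finite group by a virtually residually $p$-finite group need not be virtually residually $p$-finite, the fact that the relevant extensions are ``$p$-good'' must be checked by hand, as the pro-$p$ counterpart of the arguments in \cite{GBaumslag-63} and \cite{Lub}. The remaining ingredients --- accessibility, canonicity of the one-ended vertex groups, and the permanence of virtual residual $p$-finiteness under finite products and subgroups --- are routine.
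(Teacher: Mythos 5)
Your proposal takes a genuinely different route from the paper's and, as you yourself flag, that route lands on an unresolved gap. You invoke Dunwoody accessibility to get the full decomposition of $G$ into one-ended and finite pieces, then propose to analyse $\Out(G)$ structurally via the Stallings--Dunwoody deformation space, controlling it by restriction maps to the $\Aut(G_i)$, by $\Out(F_r)$, by twists, and by graph symmetries, and to conclude that $\Inn(G)$ is pro-$p$-closed in a residually $p$-finite subgroup of $\Aut(G)$ ``because each coordinate is $p$-good.'' That last step is not a routine upgrade: the group of twists is a \emph{quotient} of a product of centralisers $Z_{G_v}(G_e)$ by vertex and edge relations, and quotients do not inherit residual $p$-finiteness; the extensions that appear are not automatically $p$-good; and the kernel of the restriction maps $\rho_v$ contains bitwists, which your coordinate description does not obviously control. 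You have correctly identified that this is where the real content lies, but you have not supplied it, and it is not clear this can be closed without essentially reproving the theorem by other means. Incidentally, \cite[Thm.\ 1.5]{M-O-norm_aut}, which you claim to be following, is not a deformation-space argument; it is already a conjugacy-separability argument in the style of Grossman, so your outline is not in fact the pro-$p$ upgrade you intend.

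The paper's proof avoids these difficulties entirely by not attempting a structural decomposition of $\Out(G)$. It uses Stallings to get a single splitting of $G$ over a finite group $C$, passes to a finite index normal residually $p$-finite subgroup $H\lhd G$ with $H\cap C=\{1\}$, and applies the Kurosh theorem to write $H=A*B$ as a nontrivial free product of finitely generated residually $p$-finite groups; since $H$ is centerless, Lemma~\ref{lem:outnormsub} reduces the problem to $\Out(H)$. Then Grossman's method does the work: $H$ is hyperbolic relative to $\{A,B\}$ with $E(H)=\{1\}$, so any non-inner $\alpha\in\Aut_p(H)$ moves some loxodromic $g$ to a non-commensurable loxodromic $\alpha(g)$ (Lemma~\ref{lem:comm_aut_def}); a Dehn filling $H_1=A_1*B_1$ with $A_1,B_1$ finite $p$-groups preserves this non-commensurability (Lemma~\ref{lem:non-comm}); $H_1$ is conjugacy $p$-separable by Toinet (it is a free-by-finite-$p$ group); and Lemma~\ref{lem:out-resp_crit} converts conjugacy $p$-separability into a finite $p$-quotient of $\Out_p(H)$ detecting $\hat\alpha$. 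The conjugacy-$p$-separability input is precisely what substitutes for the ``$p$-goodness'' you would otherwise have to verify by hand, and it is the ingredient missing from your outline.
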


Recall that limit groups (finitely generated fully residually free groups)  are toral relatively hyperbolic    \cite{Alib,Dah}.
Residual  finiteness of $\Out(G)$ for such a group $G$ was proved by Metafsis and Sykiotis in \cite{Met-Syk}.
Combining Theorems \ref{resp} and \ref{thm:resp-inf_ends} gives the following enhancement:

\begin{cor}\label{cor:limit}   If $G$ is a limit group (finitely generated fully residually free group), then $\Out(G)$ is  virtually residually $p$-finite for any prime $p$.
\end{cor}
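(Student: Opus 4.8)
The plan is to decompose the limit group $G$ according to its number of ends and apply the two main theorems. A limit group is torsion-free and finitely generated, so it is either trivial, infinite cyclic, free of higher rank, one-ended, or it splits as a nontrivial free product of freely indecomposable limit groups. First I would dispose of the easy cases: if $G$ is trivial then $\Out(G)$ is trivial; if $G\cong\Z$ then $\Out(G)\cong\Z/2\Z$ is finite, hence certainly virtually residually $p$-finite for every prime $p$. For the remaining cases I would invoke the Grushko decomposition $G=G_1*\dots*G_n*F_r$ in which each $G_i$ is a freely indecomposable limit group that is neither trivial nor infinite cyclic (freely indecomposable limit groups which are $\Z$ get absorbed into the free factor $F_r$), and $F_r$ is free of rank $r$. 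If $n+r\geq 2$ in a suitable sense, i.e.\ the decomposition is nontrivial, then $G$ has infinitely many ends; otherwise $G=G_1$ is one-ended (or $G=F_r$ with $r\geq 2$).

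For the one-ended case, I would use that a freely indecomposable, one-ended limit group is toral relatively hyperbolic: by the cited work of Alibegovi\'c and Dahmani \cite{Alib,Dah}, every limit group is hyperbolic relative to its (finitely many conjugacy classes of) maximal non-cyclic abelian subgroups, and these are finitely generated and free abelian since $G$ is torsion-free; hence $G$ is toral relatively hyperbolic, and being one-ended, Theorem \ref{resp} applies directly to give that $\Out(G)$ is virtually residually $p$-finite for every prime $p$. (In the degenerate situation where $G$ is one-ended but already hyperbolic with no peripheral structure, one takes $\calp=\emptyset$; Theorem \ref{resp} still applies, or one may instead note that a one-ended hyperbolic limit group is toral relatively hyperbolic with empty peripheral family.)

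For the infinitely-ended case, the point is that limit groups are residually free, hence residually $\Z$, and free abelian groups, being residually $p$-finite for every $p$, show that $\Z$ — and therefore any free group — is residually $p$-finite for every prime $p$; a fortiori $G$, being residually free, is residually $p$-finite for every prime $p$. (Residual freeness passes to residual $p$-finiteness because if $g\neq 1$ there is $\varphi\colon G\to F$ with $\varphi(g)\neq 1$, and $F$ is residually $p$-finite.) Thus $G$ is in particular virtually residually $p$-finite for every prime $p$, and since $G$ is finitely generated with infinitely many ends, Theorem \ref{thm:resp-inf_ends} yields that $\Out(G)$ is virtually residually $p$-finite. Combining the two cases with the trivial observation that $\Out(F_r)$ for $r\geq 2$ is covered either by Lubotzky's theorem or again by Theorem \ref{thm:resp-inf_ends} (a free group of rank $\geq 2$ has infinitely many ends and is residually $p$-finite), we conclude in all cases.

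The only genuine point requiring care — and the part I would expect to be the main obstacle to a clean write-up rather than a deep difficulty — is the reduction to the hypotheses of the two theorems, i.e.\ verifying that in the one-ended case the maximal abelian peripheral subgroups are finitely generated (so that ``toral'' really applies) and that Theorem \ref{resp}'s one-endedness hypothesis is the absolute one-endedness of $G$, which for a freely indecomposable limit group with abelian peripheral structure coincides with being one-ended relative to $\calp$; and in the infinitely-ended case, making sure the Grushko free factors and the free part are correctly handled so that $G$ genuinely has infinitely many ends. None of this is hard, but it is where a referee would look, so I would state the structure theory of limit groups (torsion-free, finitely generated, Grushko decomposition into freely indecomposable pieces, each one-ended piece toral relatively hyperbolic) explicitly before splitting into cases.
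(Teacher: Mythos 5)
Your proposal is correct and takes essentially the same route as the paper, which simply states that Corollary~\ref{cor:limit} follows by ``combining Theorems~\ref{resp} and~\ref{thm:resp-inf_ends}'': you have merely supplied the expected case analysis (trivial, $\Z$, one-ended toral relatively hyperbolic via \cite{Alib,Dah}, infinitely-ended using residual freeness to pass from residually free to residually $p$-finite). The only cosmetic slip is the parenthetical placement of ``or $G=F_r$ with $r\ge2$'' after the one-ended alternative, since $F_r$ with $r\ge2$ has infinitely many ends and belongs to the other case — but you correct this yourself in the final sentence.
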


We start the paper by giving a rather quick proof of Corollary \ref{main1} when $G$ is (virtually) torsion-free hyperbolic, using Sela's description of
$\Out(G)$ recalled above as a starting point.
The proof of Theorem \ref{resp} (given in Section \ref{rep}) uses similar arguments, but in order to prove Theorem \ref{gener}, we have to use   different techniques.

Say that a subgroup of   a relatively hyperbolic group $G$ is  \emph{elementary} if it is virtually cyclic or parabolic (conjugate to a subgroup of some $P_i$).
As in \cite{GL6}, we consider the canonical JSJ decomposition of $G$ over elementary subgroups relative to $\calp$. This is a graph of groups decomposition $\Gamma$
of $G$ such that edge groups are elementary, each $P_i$ is conjugate to a subgroup of a vertex group, and $\Gamma$ is $\Out(G,\calp)$-invariant;
moreover, vertex groups are either elementary, or quadratically hanging (QH), or rigid.

In the first step of the proof of Theorem \ref{gener}  (Section \ref{sec:rid}), we
replace each rigid vertex group by a new group which is residually finite and has infinitely many ends.
In the second step, we make elementary vertex groups, and edge groups,   finite (using residual finiteness of the $P_i$'s).  Apart from simple cases,
the new graph of groups represents a residually finite group $G''$ with infinitely many ends, and so $\Out(G'')$ is residually finite by  \cite {M-O-norm_aut}.
   Thus we get a homomorphism $\Out(G;\calp) \to \Out(G'')$ and we show that such homomorphisms ``approximate'' $\Out(G;\calp)$.

The second step   is easier when $G$ is torsion-free (see Section \ref{tfrh}). Torsion brings technical complications, so in its presence we prefer to
give a different argument using Dehn fillings \cite{Osin-CEP} and Grossman's method \cite{Grossman}. Sections \ref{gtor} and \ref{rep}  are independent of Section \ref{tfrh}.

\medskip
 \textbf{Acknowledgments.}
We wish to thank the organizers of the 2009 Geometric group theory conference in Bedlewo, where this research was started.
 The first author  was   supported in part by ANR-07-BLAN-0141-01 and ANR-2010-BLAN-116-03.   The work of the second author was partially supported by the EPSRC grant EP/H032428/1.

 \section{Notations and residual finiteness}
  First let us specify some notation.

  If $G$ is a group, we denote its center by $Z(G)$. If $H \leqslant G$ is a subgroup, then
  $Z_G(H)$ is its centralizer in $G$.   We will write $|G:H|$ for the index of a subgroup $H$ in $G$.
  For any $g \in G$, we denote by $\tau_g \in \Aut(G)$ the inner automorphism given by  $\tau_g:x\mapsto gxg\m$ for all $x \in G$.

If  $R \subseteq G$, then $\llangle R \rrangle^G$ will denote the normal closure of $R$ in $G$.
   If $A$ is an abelian group, and $n\ge 1$, we will write $nA =\{g^n \mid g \in A\}$  for the corresponding verbal subgroup   of  $A$.

Given $\alpha\in\Aut(G)$, we write $\hat \alpha$ for its image in $\Out(G)$.
We denote by $\Aut(G;\calp)\leqslant\Aut(G)$ the subgroup consisting of automorphisms mapping each $P_i$ to a conjugate, and by $\Out(G;\calp)$ its image in $\Out(G)$.
If every $P_i$ is NRH (e.g., if $P_i$ is not virtually cyclic and has no non-abelian free subgroups), then
$\Out(G;\calp)$ has finite index in $\Out(G)$ (see \cite[Lemma 3.2]{M-O-fixed_sbgp}).

  Given a group $G$, the cosets of finite index  normal subgroups define a basis of the \emph{profinite topology} on $G$. This topology is Hausdorff if and only if $G$ is residually finite.
  A subset $S$ of $G$ is said to be \emph{separable} if $S$ is closed in the profinite topology.
Thus,
if $G$ is residually finite, then any finite subset of $G$ is separable.

A subgroup $K \leqslant G$ is closed in the profinite topology if and only if $K$ is the intersection of a family of finite index subgroups.
It is easy to see that a normal subgroup $N \lhd G$ is separable if and only if $G/N$ is residually finite.
In particular,   if $G$ is residually finite and $N \lhd G$ is finite, then $G/N$ is also  residually finite.

  If $H\leqslant G$ has finite index, then $G$ is residually finite if and only if $H$ is.
We will also use the following fact: the fundamental group of a finite graph of groups with residually finite vertex groups and finite edge groups is residually finite (see,   for instance, \cite[II.2.6.12]{Serre}).

  Recall that in a finitely generated group $G$ every finite index subgroup $K \leqslant G$ contains a finite index subgroup $N$ which is characteristic in $G$, e.g., one can take  $N=\bigcap_{\alpha \in \Aut(G)} \alpha(K)$.
Thus, if $G$  finitely generated and residually finite, then for every $g \in G \setminus\{1\}$ there is a characteristic subgroup $N$ of
finite index in $G$ such that $g \notin N$.

\section{Torsion-free hyperbolic groups}
\label{hyp}

The goal of this section is to   give a short proof of the following statement:

\begin{thm}\label{rfhyp}
Let $G$ be  a one-ended  hyperbolic group. If $G$ is virtually torsion-free, then $Out(G)$ is residually finite.
\end{thm}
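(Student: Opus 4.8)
The plan is to exploit Sela's structural description of $\Out(G)$ for a one-ended hyperbolic group, which is mentioned in the introduction: passing to a finite-index subgroup, $\Out(G)$ is virtually a central extension of a free abelian group $\Z^n$ (coming from Dehn twists along the canonical JSJ decomposition) by a direct product of mapping class groups of surfaces associated to the quadratically hanging (QH) vertices. Since a group is residually finite if and only if a finite-index subgroup is, it suffices to prove residual finiteness for this finite-index subgroup, call it $\Out_0$. First I would invoke Grossman's theorem (via conjugacy separability of surface groups \cite{Grossman, A-K-T}) that mapping class groups of compact surfaces are residually finite, and recall that $\Z^n$ is residually finite; the issue, as the excerpt explicitly flags, is that the central extension
\[
1 \to \Z^n \to \Out_0 \to \textstyle\prod_j \mathrm{MCG}(S_j) \to 1
\]
need not split, so residual finiteness of the two ends of the extension does not immediately give it for $\Out_0$.

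To overcome this I would argue as follows. It is enough to separate an arbitrary $1\ne \varphi\in\Out_0$ from the identity by a map to a finite group. If the image $\bar\varphi$ of $\varphi$ in $\prod_j\mathrm{MCG}(S_j)$ is nontrivial, we are done by residual finiteness of the mapping class group factor. So assume $\varphi$ lies in the central $\Z^n$, i.e. $\varphi$ is a nontrivial product of Dehn twists along the JSJ edges. Here I would use the concrete action of $\Out_0$ on $G$: a nontrivial Dehn twist $\varphi$ acts nontrivially on $G$, so there is $g\in G$ with $\varphi(g)g^{-1}\ne 1$ (choosing $g$ in a vertex group adjacent to a relevant edge). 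Now use that $G$ is virtually torsion-free and hyperbolic, hence — after replacing $G$ by a torsion-free finite-index characteristic subgroup, which does not change $\Out(G)$ up to finite index — $G$ is finitely generated and we may pick a finite characteristic subgroup-free quotient; more to the point, $\Out_0$ acts on $\Z^n=H_1$-type data of the JSJ, and the twist coefficients can be detected in finite quotients of $G$ because the edge groups are maximal cyclic and virtually the twisting is by a generator raised to some nonzero power. The key mechanism: reduce modulo a characteristic finite-index subgroup $N\lhd G$ chosen so that $\varphi(g)g^{-1}\notin N$; this induces a homomorphism from a finite-index subgroup of $\Out_0$ to $\Out(G/N)$ with $\varphi$ having nontrivial image, and $G/N$ is finite so $\Out(G/N)$ is finite.

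The hard part will be making the last reduction uniform, i.e. showing that the homomorphism $\Out_0\to\Out(G/N)$ (or rather to $\mathrm{Aut}(G/N)$ modulo the appropriate subgroup) is actually well-defined on a finite-index subgroup and genuinely detects the central twist — in other words, that Dehn twists survive in finite quotients of a one-ended hyperbolic group. This is where residual finiteness of $G$ itself (available since $G$ is virtually torsion-free hyperbolic, hence linear by Sela/being virtually a lattice — or rather, virtually torsion-free hyperbolic groups need not be linear, so one uses instead that such $G$ are residually finite because virtually torsion-free groups that are hyperbolic with torsion-free part known to be... ) — more carefully, one uses that $G$ is \emph{residually finite}: virtually torsion-free hyperbolic groups are residually finite precisely when their torsion-free finite-index subgroups are, and this is the content that lets characteristic finite quotients separate $\varphi(g)g^{-1}$ from $1$. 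I would then assemble: choose a descending chain of characteristic finite-index $N_k\lhd G$ with trivial intersection; each gives $\Out_0\to\Out(G/N_k)$; their product embeds $\Out_0$ into a product of finite groups after checking that an automorphism acting trivially on every $G/N_k$ and trivially on $\prod\mathrm{MCG}(S_j)$ must be trivial — which follows since such an automorphism acts trivially on $G=\varprojlim G/N_k$. Packaging this argument cleanly, and handling the small-complexity QH pieces and the finite-index bookkeeping, is the main technical obstacle; the conceptual content is just "Dehn twists are visible in the profinite completion of $G$."
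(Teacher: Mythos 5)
Your overall plan follows the paper's strategy — start from Sela's description of $\Out(G)$ as (virtually) a central extension of $\Z^q$ by a product coming from the QH pieces, dispose of the mapping-class-group part via Grossman, and then try to detect the remaining central twists by passing to finite quotients of $G$. But the final step, which you yourself flag as ``the hard part,'' contains a genuine error, and it is exactly where the real content of the theorem lies.

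The error is in the sentence claiming that ``an automorphism acting trivially on every $G/N_k$ and trivially on $\prod\mathrm{MCG}(S_j)$ must be trivial, which follows since such an automorphism acts trivially on $G=\varprojlim G/N_k$.'' To make the map $\Out_0\to\prod_k\Out(G/N_k)\times\prod_j\mathrm{MCG}(S_j)$ injective you need: if a class $\hat\alpha$ maps to the \emph{identity} of each $\Out(G/N_k)$, then $\hat\alpha=1$. Mapping to the identity of $\Out(G/N_k)$ only means the induced automorphism of $G/N_k$ is \emph{inner}, by conjugation by some $\bar h_k\in G/N_k$; these elements need not be compatible, and even if they were they would give an element of the profinite completion $\widehat G$, not of $G$. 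An automorphism that becomes inner in every characteristic finite quotient need not be inner on $G$. If your argument were correct as stated, it would show that $\Out(G)$ is residually finite for \emph{every} finitely generated residually finite $G$, which is false by the Bumagin--Wise theorem cited in the introduction. Concretely: for a Dehn twist $\varphi$ around an edge $e$ twisting by $z\in G_e$, the representative $\alpha$ conjugates one side of $e$ by $z$ and fixes the other; modulo $N$, if $z\in N$ the automorphism becomes trivial, and if $z\notin N$ it becomes inner (conjugation by the image of $z$). Nothing in your choice of $N$ prevents this.

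What the paper actually does to get around this is to work not with $\Out(G/N)$ but with the decorated groups $PMCG^\partial(H)$, where an element carries the explicit twist coordinates $(\alpha;a_1,\dots,a_s)$; residual finiteness of $PMCG^\partial(H)$ for finitely generated residually finite $H$ is Lemma \ref{rfd} and is genuinely easier than residual finiteness of $\Out$. The kernel of the projection from the product $\Pi=\prod_v PMCG^\partial(G_v)$ onto the relevant finite-index subgroup $M$ of $\Out(G)$ sits inside the free abelian twist group $\calt_\Pi$, and the key point (Lemma \ref{omcg}) is that $PMCG^\partial(G_v)/n\calt_{G_v}$ is residually finite: this is proved by filling the boundary components of the surface $\Sigma_v$ with cone points of order $n$ to get a closed hyperbolic 2-orbifold $\Sigma_n$, where the center is trivial and the boundary twists become finite-order, so that the twist group maps onto $(\Z/n\Z)^s$ with kernel exactly $n\calt_{G_v}$. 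Lemma \ref{alg} then shows the kernel $Z$ of $\Pi\to M$ is separable in $\Pi$. The paper explicitly warns (after the statement of Lemma \ref{omcg}) that one cannot simply cite the short exact sequence and residual finiteness of the two ends. Your proposal runs into precisely that pitfall: you need the closed-orbifold construction (or something equally concrete) to make the twist parameters visible in a finite group, and your proposal does not supply it.
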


\subsection{Automorphisms with twistors } \label{relaut}

Let $H$ be a group.
Fix finitely many subgroups $C_1, \dots, C_s$ (not necessarily distinct),  with $s\ge1$.
We define groups $PMCG(H)$ and $PMCG^\partial(H)$ as in Section 4    of \cite{GD}.

First,  $PMCG(H)$ is the subgroup of
 $\Out(H)$ consisting of all elements $\hat\alpha$ represented by automorphisms $\alpha\in \Aut(H)$ acting on each $C_i$ as $\tau_{a_i}$ for  some   $a_i\in H$.

Let $\Aut^\partial(H)$ be the subset of $\Aut(H) \times H^s$ given by
\begin{multline*}
\Aut^\partial(H)=\{(\alpha;a_1,\dots,a_s) \mid \alpha \in \Aut(H), \, a_i \in H, \, \alpha(c)=a_ic a_i ^{-1} \\ \mbox{ for  all } c \in C_i \mbox{ and   }i=1,\dots,s \}.
\end{multline*}
It is a group, with multiplication   defined
by $$(\alpha;a_1,\dots,a_s) (\alpha';a'_1,\dots,a'_s)=(\alpha\circ\alpha';\alpha(a'_1)a_1,\dots,\alpha(a'_s)a_s),$$
in accordance with the fact that $\alpha\circ\alpha'$ acts on $C_i$ as conjugation by $\alpha(a'_i)a_i$.

One easily checks that $I=\{(\tau_h; h,\dots,h) \in \Aut^\partial(H) \mid h \in H\}$ is a normal subgroup of $\Aut^\partial(H)$.
As in \cite{GD}, we define $PMCG^\partial(H)$ to be the quotient of $\Aut^\partial(H)$ by $I$. Thus
an element of $PMCG^\partial(H)$ is represented by   $(\alpha;a_1,\dots,a_s)$,
where $\alpha$ is an automorphism of $H$ acting on $C_i$ as $\tau_{a_i}$, with $a_i\in H$.  Representatives are defined only up to multiplication
by elements of the form $(\tau_h;h,\dots,h)$; in particular, for each $i$, there is a unique representative with $a_i=1$.
Mapping $(\alpha;a_1,\dots,a_s)$ to $\hat\alpha$ defines a projection $\pi:PMCG^\partial(H)  {\to} PMCG(H)$.

As observed in \cite[Lemma 4.1]{GD} there is a short exact sequence $$\{1\} \to \calt_H \to PMCG^\partial(H) \stackrel{\pi}{\to} PMCG(H)\to \{1\} $$
whose kernel $\calt_H$ is
the \emph{group of twists}. It fits in an exact sequence
$$\{1\} \to Z(H) \to \prod_{i=1}^s Z_{H}(C_i)    \to \calt_H \to \{1\},$$
where the first map is the diagonal embedding and the second map takes $(z_1,\dots,z_s)$ to the class of $(id_H;z_1,\dots,z_s)$ in $PMCG^\partial(H)$.

     If $N\lhd H$ is a normal subgroup invariant under $PMCG (H)$, there are natural homomorphisms $PMCG (H)\to PMCG (H/N)$ and $PMCG^\partial(H)\to PMCG^\partial(H/N)$,
 where the target groups are defined with respect to the images of $C_1,\dots, C_s$ in $H/N$.

  \begin{lem}\label{rfd}
   If $H$ is finitely generated and residually finite, then $PMCG^\partial(H)$ is residually finite.
  \end{lem}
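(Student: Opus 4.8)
The plan is to reduce the statement to residual finiteness of $\Aut^\partial(H)$, and then to deduce the latter from residual finiteness of $H$ by passing to characteristic finite index subgroups. The point to keep in mind is that $PMCG^\partial(H)$ is defined as a \emph{quotient} of $\Aut^\partial(H)$, and quotients of residually finite groups need not be residually finite; so the first task is to also realise $PMCG^\partial(H)$ as a \emph{subgroup} of $\Aut^\partial(H)$, which is exactly what the normalization $a_1=1$ of representatives allows.

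Accordingly, I would first set $G_1:=\{(\alpha;a_1,\dots,a_s)\in\Aut^\partial(H)\mid a_1=1\}$; by definition of $\Aut^\partial(H)$, the condition $a_1=1$ simply says that $\alpha$ fixes $C_1$ pointwise. Using the multiplication law of $\Aut^\partial(H)$ one checks directly that in a product, or an inverse, of elements of $G_1$ the first coordinate of the twistor part is again $1$, so $G_1$ is a subgroup of $\Aut^\partial(H)$. Since $s\ge 1$, every element of $PMCG^\partial(H)$ has a unique representative with $a_1=1$, so the restriction to $G_1$ of the quotient map $\Aut^\partial(H)\to PMCG^\partial(H)$ is a bijective homomorphism; hence $PMCG^\partial(H)\cong G_1$. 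As a subgroup of a residually finite group is residually finite, it now suffices to show that $\Aut^\partial(H)$ is residually finite.

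For this, recall that $H$ is finitely generated and residually finite, so every finite index subgroup of $H$ contains a finite index \emph{characteristic} subgroup. For such a subgroup $N\lhd H$, reduction modulo $N$ yields a homomorphism $\rho_N\colon\Aut^\partial(H)\to\Aut^\partial(H/N)$, sending $(\alpha;a_1,\dots,a_s)$ to $(\bar\alpha;\bar a_1,\dots,\bar a_s)$ (this is well defined precisely because $N$ is characteristic, and it agrees with the natural map from the remark preceding the lemma); its target is finite since $H/N$ is. Now let $1\ne(\alpha;a_1,\dots,a_s)\in\Aut^\partial(H)$. If $\alpha\ne\mathrm{id}_H$, pick $x\in H$ with $\alpha(x)x^{-1}\ne1$ and choose $N$ characteristic of finite index with $\alpha(x)x^{-1}\notin N$; then $\bar\alpha(\bar x)\ne\bar x$, so $\rho_N(\alpha;a_1,\dots,a_s)\ne1$. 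If $\alpha=\mathrm{id}_H$, then $a_i\ne1$ for some $i$ (otherwise the element is trivial), and choosing $N$ characteristic of finite index with $a_i\notin N$ gives $\bar a_i\ne1$, so again $\rho_N(\alpha;a_1,\dots,a_s)\ne1$. Hence $\Aut^\partial(H)$ is residually finite, and therefore so is $PMCG^\partial(H)$.

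I do not expect a serious obstacle here; the one step that needs care is the identification $PMCG^\partial(H)\cong G_1$, together with the realisation that one cannot instead argue through $PMCG(H)\le\Out(H)$, since $\Out(H)$ (hence possibly $PMCG(H)$) may fail to be residually finite even for finitely generated residually finite $H$. In effect, the normalization shows that the subgroup $I$ is separable in $\Aut^\partial(H)$, and it is precisely the presence of the extra twistor data in $PMCG^\partial$ — as opposed to $PMCG$ — that makes this possible. One could alternatively invoke Baumslag's theorem that $\Aut(H)$ is residually finite \cite{GBaumslag-63} in place of the characteristic-subgroup computation of the third paragraph, but the latter is self-contained and deals with the twistor coordinates at the same time.
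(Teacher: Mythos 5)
Your proof is correct and uses essentially the same mechanism as the paper: normalize to the representative with $a_1=1$, reduce modulo a characteristic finite index subgroup $N\lhd H$, and separate by cases according to whether $\alpha\ne\mathrm{id}_H$ or some $a_i\ne1$. The only cosmetic difference is that you pass through residual finiteness of $\Aut^\partial(H)$ and the explicit section $G_1\cong PMCG^\partial(H)$, whereas the paper works directly with the normalized representative in $PMCG^\partial(H)$; the paper's Remark \ref{rem:emb_of_aut} already records a close variant of your embedding observation.
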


\begin{proof}
Given any non-trivial $\Phi\in PMCG ^\partial(H)$, we can construct a characteristic finite index subgroup $N\lhd H$ such that   $\Phi$ maps non-trivially to the finite group $PMCG ^\partial(H/N)$. Indeed, let
$(\alpha; 1,\dots,a_s)$ be the representative of $\Phi$ with $a_1=1$.  If $\alpha(h)\ne h$ for some $h\in H$, we choose such an $N$ with $h\m\alpha(h)\notin N$.
On the other hand, if $a_i\ne1$ for some $i\ge2$, we choose a characteristic subgroup of finite index $N\lhd H$ with $a_i\notin N$.
\end{proof}

\begin{rem}\label{rem:emb_of_aut}
There are injective homomorphisms   $$ \Aut^\partial(H) \to H^s \rtimes \Aut(H) \mbox{  and }PMCG^\partial(H) \to  H^{s-1} \rtimes\Aut(H)$$ defined by
$$ (\alpha; a_1,a_2,\dots,a_s) \mapsto\left( (a_1\m,a_2\m,\dots,a_s\m),\alpha\right) \mbox{ and } (\alpha; 1,a_2,\dots,a_s)\mapsto  \left( (a_2\m,\dots,a_s\m),\alpha \right)$$ respectively,
with $\Aut(H)$ acting on $H^s$ and $H^{s-1}$ diagonally.
This yields another way of proving Lemma \ref{rfd},   using residual finiteness of
  the semidirect product of a
   finitely generated residually finite group and a residually finite group.
 \end{rem}

\subsection{Surfaces}\label{surf}

We now specialize to the case when $H$ is
the fundamental group of a compact  (possibly non-orientable) surface $\Sigma$ with    boundary components $\calc_1,\dots, \calc_s$; we require $s\ge1$ and $ \chi(\Sigma)<0$. We fix a representative
$C_i$ of $\pi_1(\calc_i)$ in $G$ and a generator $c_i$ of $C_i$. Then $PMCG^\partial(H)$, as defined above, may be identified with the group of isotopy classes of
homeomorphisms of $\Sigma$ equal to the identity on the boundary. In this definition,
the isotopy is  relative to the boundary, so $PMCG^\partial(H)$ contains Dehn twists near boundary components.
If we do not require isotopies to be relative to the boundary, we get  $PMCG (H)$.

 There is
a central extension
$$\{1\}\to \calt_H \to PMCG^\partial(H)\to PMCG (H)\to\{1\}
$$
as above, where $\calt _H\cong \Z^s$  is   generated by   Dehn  twists  near  boundary components of $\Sigma$.
 The inclusion from  $\Z^s$
 to $PMCG^\partial(H)$ may be written algebraically as $$(n_1,\dots,n_s)\mapsto (id_H;c_1^{n_1},\dots, c_s^{n_s}).$$

\begin{lem}\label{omcg}
Let $n\calt_H\lhd \calt  _H$ be the subgroup generated by the $n$-th powers of the twists. Then $PMCG^\partial(H)/n\calt_H $ is residually finite for all sufficiently large  $n\in\N$.
\end{lem}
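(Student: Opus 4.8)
The plan is to exploit the central extension $\{1\}\to\calt_H\to PMCG^\partial(H)\xrightarrow{\pi}PMCG(H)\to\{1\}$: dividing by $n\calt_H$ gives a central extension
$$\{1\}\to\calt_H/n\calt_H\to PMCG^\partial(H)/n\calt_H\xrightarrow{\bar\pi}PMCG(H)\to\{1\}$$
with finite kernel $\calt_H/n\calt_H\cong(\Z/n\Z)^s$. Since $\Sigma$ has non-empty boundary, $H=\pi_1(\Sigma)$ is free, so $PMCG(H)\leqslant\Out(H)=\Out(F_r)$ is residually finite; but a central extension of a residually finite group by a finite group need not be residually finite, so something more is needed. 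The aim is to produce a homomorphism $\bar\rho\colon PMCG^\partial(H)/n\calt_H\to PMCG^\partial(\bar H)$, for a well-chosen quotient $\bar H$ of $H$, such that $\bar\rho\times\bar\pi$ is injective; residual finiteness of $PMCG^\partial(H)/n\calt_H$ then follows from that of $PMCG^\partial(\bar H)$ and of $PMCG(H)$.

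The naive choice $\bar H=H/N$ with $N$ a characteristic finite index subgroup (as in the proof of Lemma \ref{rfd}) fails: for the induced map to kill $n\calt_H$ one needs $c_i^n\in N$, which for characteristic $N$ forces $H/N$ to have exponent dividing $n$, far too restrictive to separate a general element of $H$. Instead I would take
$$\bar H:=H/M,\qquad M:=\llangle c_1^n,\dots,c_s^n\rrangle^H,$$
so that $\bar H=\pi_1^{orb}(\widehat\Sigma)$ is the fundamental group of the closed $2$-orbifold $\widehat\Sigma$ obtained by capping each boundary circle $\calc_i$ with a disc carrying a cone point of order $n$. The subgroup $M$ contains all $c_i^n$ and is invariant under every automorphism of $H$ representing an element of $PMCG(H)$ (such an automorphism sends $c_i$ to a conjugate of $c_i$, hence $c_i^n$ into $M$), so there is a natural homomorphism $PMCG^\partial(H)\to PMCG^\partial(\bar H)$; it kills $n\calt_H$ since the image $\bar c_i$ of $c_i$ satisfies $\bar c_i^{\,n}=1$, hence descends to $\bar\rho$. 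For $n$ large, $\chi^{orb}(\widehat\Sigma)=\chi(\Sigma)+s/n<0$, so $\bar H$ is a non-elementary Fuchsian group: it is finitely generated and residually finite (a finitely generated linear group, or: virtually a closed surface group), $Z(\bar H)=\{1\}$, and each $\bar c_i$ has order \emph{exactly} $n$. By Lemma \ref{rfd}, $PMCG^\partial(\bar H)$ is residually finite — this is where "sufficiently large $n$" enters, and it is the point that replaces the failed finite-quotient approach.

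Finally one checks that $\bar\rho\times\bar\pi$ is injective. If $\Phi$ lies in both kernels, then $\bar\pi(\Phi)=1$ gives $\Phi\in\calt_H/n\calt_H$, so $\Phi$ is represented by a twist $(id_H;c_1^{n_1},\dots,c_s^{n_s})$; passing to $\bar H$, using $Z(\bar H)=\{1\}$, the condition $\bar\rho(\Phi)=1$ (image in the subgroup $I$ of $PMCG^\partial(\bar H)$) forces $\bar c_i^{\,n_i}=1$ for all $i$, hence $n\mid n_i$ because $\bar c_i$ has order exactly $n$; so $\Phi=1$. Equivalently, $\bar\rho$ restricts to an isomorphism $\calt_H/n\calt_H\cong(\Z/n\Z)^s\to\calt_{\bar H}\cong(\Z/n\Z)^s$ onto the group of cone-point twists in $PMCG^\partial(\bar H)$. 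Thus $PMCG^\partial(H)/n\calt_H$ embeds into $PMCG^\partial(\bar H)\times PMCG(H)$, a product of residually finite groups, hence is residually finite. The main obstacle is precisely the choice of $\bar H$: one must resist passing to finite quotients of $H$ and instead use the infinite, but residually finite, orbifold quotient, to which Lemma \ref{rfd} applies and in which the peripheral elements keep order exactly $n$ once $n$ is large.
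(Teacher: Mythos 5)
Your proposal is correct and takes essentially the same approach as the paper: you pass to the orbifold fundamental group $O_n=H/\llangle c_1^n,\dots,c_s^n\rrangle^H$, use hyperbolicity of the orbifold for large $n$ to get residual finiteness, trivial center, and exact order $n$ for each $\bar c_i$, and then embed $PMCG^\partial(H)/n\calt_H$ into $PMCG(H)\times PMCG^\partial(O_n)$. The only cosmetic difference is that your injectivity argument phrases the key computation ($\ker\theta=n\calt_H$) via the exact order of $\bar c_i$ rather than via the identification $\calt_{O_n}\cong(\Z/n\Z)^s$, but the content is identical.
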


It is worth noting that residual finiteness does not follow directly from the fact that the group $PMCG^\partial(H)/n\calt_H $ maps onto the residually finite group $PMCG (H)$ with finite kernel.

 \begin{proof}
Let $\Sigma_n$ be the closed orbifold obtained by replacing each boundary component of $\Sigma$ by a conical point of order $n$, and let $O_n=H/\llangle c_1^n,\dots,c_s^n\rrangle^H$ be its fundamental group.

The Euler characteristic of $\Sigma_n$ is $\chi(\Sigma_n)=\chi(\Sigma)+\frac sn$ (see \cite{Scott} or \cite{Th}). It is negative for $n$ large
since  $\chi(\Sigma)<0$,
so   $\Sigma_n$ is a hyperbolic
orbifold (see \cite[Thm.\ 13.3.6]{Th}).
It follows that $O_n$ embeds into the group of isometries of
the hyperbolic plane as a non-elementary subgroup.
In particular, $O_n$ has trivial center and is residually finite.

Defining  $\calt_{O_n}$ as the kernel of the map $PMCG^\partial(O_n)\to PMCG (O_n)$,
there is a commutative diagram of short exact   sequences

 $$
 \xymatrix{
\{1\}\ar[r]&\calt_H\ar[r]\ar[d]^\theta&PMCG^\partial(H)\ar[r]\ar[d]& PMCG (H)\ar[r]\ar[d]&\{1\}\\
\{1\}\ar[r]&\calt_{O_n}\ar[r]& PMCG^\partial(O_n)\ar[r]& PMCG (O_n)\ar[r]& \{1\}.
}
$$

Since $O_n$ has trivial center, and the image of $C_i$ in $O_n$ is equal to  its centralizer, $\calt_{O_n}$ is isomorphic to $(\Z/n\Z)^s$,  so the kernel of  the map $\theta$ from $\calt _H$ to $\calt_{O_n}$ is  precisely $n\calt _H$ (it is not bigger).
The maps from $PMCG^\partial(H)$ to $PMCG (H)$ and $PMCG^\partial(O_n)$ both factor through $PMCG^\partial(H)/n\calt _H$,
and the intersection of their kernels is $\ker\theta=n\calt _H$. In other words, any non-trivial element of $PMCG^\partial(H)/n\calt _H$ has a
non-trivial image in $PMCG (H)$ or in $PMCG^\partial(O_n)$. It is well-known that $PMCG (H)$ is residually finite (it is contained in $\Out(H)$,
which is residually finite  by \cite{Grossman} because $H$  is a finitely generated free group),
and
$PMCG ^\partial(O_n)$ is residually finite by Lemma \ref{rfd}, so $PMCG^\partial(H)/n\calt _H$ is residually finite.
\end{proof}

\begin{rem} It follows from the classification of non-hyperbolic $2$-orbifolds \cite{Th} that $n\ge 3$ is always sufficient  in Lemma \ref{omcg}.
\end{rem}

\subsection{An algebraic lemma}

\begin{lem}\label{alg}
Consider a finite set $V$ and groups $P_v$, $v \in V$, with normal subgroups $T_v$ free abelian of finite ranks. Let $P=\prod_{v \in V} P_v$ and $  T=\prod_{v \in V} T_v \leqslant P$ be their direct products.
  Note that $nT_v$ is characteristic in $T_v$, hence it is normal in $P_v$.

If
 $P_v/nT_v$ is residually finite for every $v \in V$ and for all sufficiently large $n\in \mathbb{N}$,  then any subgroup   $Z\leqslant T $ is closed in the profinite topology of $P $.
 In particular,  if $Z$ is normal in $P$, then $P/Z$ is residually finite.
 \end{lem}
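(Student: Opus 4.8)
The plan is to reduce the general statement to the case of a single factor and then use the hypothesis together with a pro-$p$–style "controlled" separation argument. First I would observe that, since separability of $Z$ in the profinite topology of $P$ means $Z=\bigcap_{i} P_i$ for finite-index subgroups $P_i\lhd P$, it suffices to show: for every $g\in P\setminus Z$ there is a finite-index normal subgroup $N\lhd P$ with $Z\leqslant N$ and $g\notin N$. Because $T$ is a direct product of the $T_v$ and $Z\leqslant T$, and each $T_v$ is free abelian of finite rank, the key algebraic input is that the subgroups $nT=\prod_v nT_v$ form a neighbourhood basis of $1$ in $T$ for the profinite topology \emph{of $T$}: any finite-index subgroup of a finitely generated free abelian group contains some $nT_v$. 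Hence $Z=\bigcap_{n}(Z\cdot nT)$ inside $T$, and moreover each $Z\cdot nT$ has finite index in $T$.

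The main step is to promote this to separability in $P$. Fix $g\in P\setminus Z$. If $g\notin T$, then since $T$ is normal of possibly infinite index I would instead argue as follows: write $g=(g_v)_v$ and work coordinate-wise, using that $nT_v$ is normal in $P_v$ (as noted, it is characteristic in $T_v$). Consider the quotient $P/nT=\prod_v (P_v/nT_v)$. For $n$ large each factor $P_v/nT_v$ is residually finite by hypothesis, so $P/nT$ is residually finite (finite direct product of residually finite groups). Now the image $\bar Z$ of $Z$ in $P/nT$ is a subgroup of the finite group $T/nT$, hence is finite, hence closed in the profinite topology of the residually finite group $P/nT$; therefore there is a finite-index normal $\bar N\lhd P/nT$ containing $\bar Z$ but not the image $\bar g$ of $g$, provided $\bar g\notin\bar Z$, i.e.\ provided $g\notin Z\cdot nT$. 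For $g\notin T$ this holds for every $n$; for $g\in T\setminus Z$ it holds for all large $n$ by the previous paragraph. Pulling $\bar N$ back to a finite-index normal subgroup $N\lhd P$ containing $nT$ (hence containing $Z$) but not $g$ finishes the separation.

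Finally, the "in particular" clause is immediate: if $Z\lhd P$ is separable then $P/Z$ is residually finite, since a normal subgroup is closed in the profinite topology exactly when the quotient by it is residually finite (as recalled in Section 2 of the paper). The step I expect to be the real obstacle — or at least the one needing care — is the dichotomy on whether $g\in T$: one must make sure that when $g\notin T$ the chosen $n$ still separates $g$ from $Z\cdot nT$ (trivially true since $Z\cdot nT\subseteq T$), and when $g\in T$ one genuinely uses finite generation of the $T_v$ to get $g\notin Z\cdot nT$ for large $n$; the residual finiteness hypothesis on $P_v/nT_v$ is exactly what is needed to convert the finite, hence closed, subgroup $\bar Z\leqslant T/nT$ into an honest finite-index separating subgroup of $P/nT$. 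No compatibility of the twists across coordinates is required because everything is done in the product.
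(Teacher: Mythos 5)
Your argument is essentially the paper's, just reorganized around a fixed $g\notin Z$ rather than first treating the finite-index case and then reducing to it: both proofs hinge on the residual finiteness of $P/nT\cong\prod_v(P_v/nT_v)$ for large $n$, the free-abelian structure of $T$ to approximate $Z$ by $Z\cdot nT$, and a ``finite subsets of a residually finite group are separable'' step.

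There is one small but genuine slip to flag. For $g\in T\setminus Z$ you claim $g\notin Z\cdot nT$ ``for all large $n$''. This is false: take $T=\Z$, $Z=2\Z$, $g=1$; then $1\in Z+n\Z$ for every odd $n$, however large. What is actually true, and what you need, is that there exists $n_0$ with $g\notin Z\cdot n_0T$ (this uses that $T/Z$ is a finitely generated abelian group, hence residually finite), and then $g\notin Z\cdot nT$ for every \emph{multiple} $n$ of $n_0$, since $nT\leqslant n_0T$. Choosing $n$ to be a multiple of $n_0$ which is large enough that each $P_v/nT_v$ is residually finite makes your argument go through. Two minor quibbles besides: you cannot in general demand the separating finite-index subgroup of $P$ (or of $P/nT$) to be normal when $Z$ is not assumed normal, though nothing you do actually requires normality; and the parenthetical ``containing $nT$ (hence containing $Z$)'' is a non sequitur --- $nT$ does not contain $Z$ --- but the pullback $N$ does contain $Z$ because $\bar N\supseteq\bar Z$, which is the correct reason.
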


 \begin{proof}
Let us first prove the result when $Z$ has finite index $k$ in the free abelian group $T$.
It  contains $ nT$ (with finite index) whenever $k$ divides $n$. For $n$ large,  $nT_v$ is separable in $P_v$ for every $v \in V$ (because $P_v/nT_v$ is residually finite),
so  $nT=\prod_{v \in V} nT_v$ is separable in $P$, by the properties of direct products. We deduce that
 $Z$ is closed in $P$, because it is equal to a finite union of cosets modulo $nT$, each of which is separable in $P$   because $nT$ is separable.

 The general case follows because $T$ is  a free abelian group
 of finite rank, and therefore every subgroup   is the intersection of a collection of finite index subgroups (because the quotient is clearly residually finite).
 \end{proof}

\subsection{Proof of Theorem \ref{rfhyp}}
First suppose that $G$ is torsion-free.
The result is true if $\Out(G)$ is finite, or if $G$ is the fundamental group of a closed surface (in the orientable case this was proved by Grossman \cite{Grossman}, and in the non-orientable case by
Allenby, Kim and Tang \cite{A-K-T}).
Otherwise, by Theorem~5.3 of \cite{GD}, the group $\Out(G)$ is virtually  a    product $\Z^q\times M$,
with $M$    a quotient of a finite direct product $\Pi=\prod_{v  }PMCG^\partial(G_v)$; here $G_v$ is a surface group $H$ as in Subsection \ref{surf}
(a QH vertex group of the cyclic JSJ decomposition of $G$), and we
denote by $\calt _{ G_v}\lhd PMCG^\partial(G_v)$ the corresponding group of twists. Moreover, the kernel $Z$ of the map from $\Pi$ to $M$ is
contained in the free abelian group $\calt _\Pi=\prod_{v  }\calt _{ G_v}$.

Lemma \ref{omcg} implies that
$PMCG^\partial(G_v)/n\calt _{ G_v}$ is residually finite  for all sufficiently large $n$.   It follows that  $Z$ is separable  in $\Pi$ by Lemma \ref{alg}.
Thus $M$, and  therefore also $\Out(G)$, are residually finite.

Now suppose that $G$ is only virtually torsion-free, and let $N\lhd G$ be a torsion-free normal  subgroup of finite index.
If $G$ is virtually cyclic, $\Out(G)$ is finite (cf.\  \cite[Lemma 6.6]{M-O-norm_aut}). Otherwise,
$N$  has trivial center, so some finite index subgroup   of $\Out(G)$ is isomorphic to the quotient of a subgroup of $\Out(N)$ by  a finite normal subgroup  (see \cite[Lemma 5.4]{G-L-i} or Lemma \ref{lem:outnormsub} below).
 We have shown above that $\Out(N)$ is residually finite, and therefore  so is $\Out(G)$.

\begin{rem} An alternative method to prove Theorem \ref{rfhyp} could employ Funar's results about residual finiteness of central extensions of mapping class groups \cite{Funar}.
However, writing a complete proof using this approach would still require substantial work, for instance because the surfaces involved may be non-orientable.
\end{rem}

\section{Relatively hyperbolic groups and trees}

In this section we recall basic material about relatively hyperbolic groups and trees.

\subsection
{Relatively hyperbolic groups} \label{defrhyp}

There are many equivalent definitions of relatively hyperbolic groups in the literature. The definition we give below is due to B.\  Bowditch \cite{Bow}; for its equivalence to the other definitions see  \cite{Hruska} or \cite{Osin-RHG}.
  In this paper we will always assume that $G$ and all the groups $P_i\in\calp$ are finitely generated. (Note that if $G$ is hyperbolic relative to a finite family of finitely generated subgroups
then $G$ is itself finitely generated. This follows, for example, from the equivalence of Definition \ref{def:rh} with Osin's definition \cite[Def. 1.6]{Osin-RHG}.)

\begin{df}[Def.~2 of \cite{Bow}]  \label{def:rh}
Consider a group $G$ with a family of subgroups $\calp=\{P_1,\dots,P_k\}$. We will say that $G$ is hyperbolic relative to $\calp$ if $G$ admits a simplicial action on a connected graph $\mathcal{K}$
such that:
\begin{itemize}
  \item $\mathcal{K}$ is $\delta$-hyperbolic for some $\delta\ge 0$, and for each $n \in \N$ every edge of $\mathcal{K}$ is contained in finitely many simple circuits of length $n$;
  \item the edge stabilizers for this action of $G$ on $\mathcal{K}$ are finite, and there are finitely many orbits of edges;
  \item $\calp$ is a set of representatives of conjugacy classes of  the infinite vertex stabilizers.
\end{itemize}
\end{df}

We usually assume that   each $P_i$ is a \emph{proper} subgroup of $G$, i.e., $P_i\ne G$  (as any $G$ is hyperbolic relative to itself).

A subgroup $H\leqslant G$ is \emph{elementary} if it is virtually cyclic (possibly finite) or \emph{parabolic}
(contained in  a conjugate of some $P_i$). Any infinite elementary subgroup $H$  is contained in a unique maximal elementary subgroup $\hat H$, and $Z_G(H)\inc \hat H$.
We say that $G$ itself is  \emph{elementary} if it is virtually cyclic or equal  to some $P_i$.

It is well known that, if some $P_i$ is virtually cyclic (or,
more generally, hyperbolic), then $G$ is hyperbolic relative to the family $\calp\setminus\{P_i\}$ -- see, for example, \cite[Thm.\  2.40]{Osin-RHG}.
In the context of Corollary \ref{main1}, we may therefore assume that no $P_i$ is virtually cyclic. We do not wish to do so in Theorem \ref{gener}, because  it may happen that
$G$ is one-ended relative to $\calp$, but not relative to $\calp\setminus\{P_i\}$ with $P_i$ infinite and virtually cyclic.
For simplicity, however, we assume in most of the paper that no $P_i$ is virtually cyclic. The (few) changes necessary to handle the general case are explained in Subsection \ref{vcpi}.

\subsection{The canonical splitting of a one-ended relatively hyperbolic group}\label{JSJ}

Assume  that $G$ is one-ended, or, more generally, one-ended relative to $\calp$: it does not split over a finite group relative to $\calp$ (i.e., with every $P_i$ fixing a point in the Bass-Serre tree). Then there is a canonical
JSJ tree $T$  over elementary subgroups relative to $\calp$ (see \cite[Corollary 13.2]{GL3b} and \cite{GL6}).
Canonical means, in particular, that $T$ is invariant under the natural action of $\Out(G;\calp)$.

The tree $T$ is equipped with an action of $G$.
We denote by $G_v$ the stabilizer of a vertex $v$, by $G_e$ the stabilizer of an edge $e$ (it is elementary).
They are infinite and finitely generated \cite{GL6}. If $e=vw$, we say that $G_e$ is an incident edge stabilizer in $G_v$ and $G_w$.

We   also consider the quotient graph of groups $\Gamma=T/G$. We then denote by $G_v$ the group carried by a vertex $v$, by $G_e$ the group carried by an edge $e$. If $v$ is an  endpoint of $e$, we often identify $G_e$ with a  subgroup of $G_v$, and we say that $G_e$ is an incident edge group at $v$.

Being a tree of cylinders (see \cite {GL3b}), $T$  is bipartite, with vertex set $A_0\cup A_1$.
The stabilizer of a vertex $v_1\in A_1$ is a maximal elementary subgroup (we also say that $v_1$ is an elementary vertex). The stabilizer of an edge $\varepsilon=v_0v_1$ (with  $v_i\in A_i$)
is a maximal elementary subgroup of $G_{v_0}$  (i.e., it is maximal among elementary subgroups contained in $G_{v_0}$), but $G_\varepsilon$ is not necessarily maximal elementary in $G_{v_1}$ or in $G$.

Vertices  in  $A_0$ have non-elementary stabilizers.
A vertex  $v\in  A_0$ (or its stabilizer $G_v$) is either  rigid or QH (quadratically hanging).
A rigid $G_v$ does not split over an elementary subgroup relative to parabolic subgroups and incident edge stabilizers. Through the Bestvina-Paulin method and Rips theory, this has strong implications on its
automorphisms (see \cite{GL6}). This will be the key point in the proof of Lemma \ref{reduc}.

To describe QH vertices,
it is more convenient to consider a QH vertex group $G_v$ of the graph of groups $\Gamma$. First suppose that $G$ is torsion-free. Then $G_v$ may be identified with
the fundamental group of a (possibly non-orientable) compact hyperbolic surface $\Sigma_v$ whose boundary is non-empty (unless $G_v=G$).

Moreover, each incident edge group $G_e$ is (up to conjugacy) the fundamental group $H_\calc$
of a boundary component $\calc$ of $\Sigma_v$. Different edges correspond to different boundary components.
Conversely, if no $P_i$ is cyclic, the fundamental   group $H_\calc$ of every boundary component $\calc$  is an incident edge group.
If cyclic $P_i$'s are allowed, it may happen that $H_\calc$ is not an incident edge group; it is then conjugate to some $P_i$.

If torsion is   allowed, we only have an
exact sequence   $$\{1\} \to F \to G_v \stackrel{\xi}{\to} P \to \{1\},$$
where $F$ is a finite group and $P$ is the fundamental group of a compact
hyperbolic 2-orbifold $\mathcal{O}_v$. If $\calc$ is a boundary component of $\mathcal{O}_v$, its fundamental group $\pi_1(\calc)\inc P$ is
infinite cyclic or infinite dihedral; one defines $H_\calc\inc G_v$ as its  full preimage under  $\xi$.  It is an incident edge group or is conjugate to a virtually cyclic $P_i$.

Note that, in all cases, a QH vertex  stabilizer $G_v$ of $T$ is virtually free (unless $G_v= G$), and  stabilizers of incident edges are
virtually cyclic.

The tree $T$ is relative to $\calp$: every $P_i$ fixes a point. If $P_i$ is not virtually cyclic, it equals  the stabilizer of a vertex $v_1\in A_1$ or is contained in some $G_{v_0}$ with $v_0$ rigid
(it may happen that $P_i=G_{v_1}\varsubsetneq G_{v_0}$). In particular, the intersection of $P_i$ with a QH vertex group is virtually cyclic.  If $P_i$ is virtually cyclic and infinite,
there is the additional possibility that it is contained in a QH vertex stabilizer (and conjugate to an  $H_\calc$ as above).

  \subsection{The automorphism group of a tree} \label{autt}
Let $T$ be any tree with an action of a finitely generated group $G$. We assume that the action is minimal (there is no proper $G$-invariant subtree), and $T$ is not a point or a line.
Let $\Out(G;T)\inc\Out(G)$ consist of  outer automorphisms $\Phi=\hat\alpha$ leaving $T$ invariant:  in other words, $\Phi$ comes from an automorphism $\alpha\in\Aut(G)$ such that
there is an isomorphism $H_\alpha:T\to T$ satisfying  $\alpha(g)H_\alpha= H_\alpha g$ for all $g\in G$.
We study $\Out(G;T)$ as in Sections 2-4 of \cite{GD}.

It is more convenient to consider the  quotient graph of groups $\Gamma=T/G$. It is finite, and the maps $H_\alpha$ induce an action of
$\Out(G;T)$ on $\Gamma$.  We denote by $\Out_0(G;T)\leqslant\Out(G;T)$ the finite index subgroup consisting of automorphisms acting trivially on  $\Gamma$.

We denote  by $V$ the vertex set of $\Gamma$, by $E$ the set of oriented edges, by $E_v$ the set of oriented edges $e$ with origin $o(e)=v$ (incident edges at $v$),
by $\cale$ the set of non-oriented edges. We write $G_v$ or $G_e$ for the group attached to a vertex or an edge, and we view $G_e$ as a subgroup of $G_v$ if $e\in E_v$ (incident edge group).

 For $v\in V$, we   define groups $PMCG(G_v)\leqslant \Out(G_v)$  and $PMCG^\partial(G_v)$  as in
  Subsection \ref{relaut},
using as  $C_i$'s
 the incident edge groups  ($s$ is the valence of $v$ in $\Gamma$, and there are repetitions  if $G_e=G_{e'}$ with $e\ne e'$).
We denote by $\pi_v:PMCG^\partial(G_v)\to
PMCG(G_v)$ the natural projection.

There is a natural   map (extension by the identity)   $\lambda_v:PMCG^\partial(G_v)\to \Out_0(G;T)$ (see \cite[Section 2.3]{GD}).
For instance, if $\Gamma$ is an amalgam $G=G_v*_{G_e}G_w$, and  $\psi\in PMCG^\partial(G_v)$ is represented by $(\alpha;a_1)$, the image of $\psi$
is represented by the automorphism of $G$ acting as $\alpha$ on $G_v$ and as conjugation by $a_1$ on $G_w$.
Elements in the image of $\lambda_v$ act as  inner automorphisms of $G$    on   $G_w$ for $w\ne v$.
The maps $\lambda_v$ have commuting images and  fit together in a map   $$\lambda:\prod _{v\in V }PMCG^\partial(G_v)\to \Out_0(G;T).$$

There is also a map $$\rho=\prod_{v\in V} \rho_v:\Out_0(G;T)\to\prod _{v\in V }\Out(G_v)$$ recording the action of automorphisms on vertex groups, and
the projection $$\pi=\prod\pi_v:\prod _{v\in V }PMCG^\partial(G_v)\to \prod _{v\in V }PMCG (G_v)\leqslant \prod _{v\in V }\Out(G_v)$$ factors as $\pi=\rho \circ\lambda$.

We let $\Out_1(G;T)\inc\Out_0(G;T)$
be the image of $\lambda$, and
  $$\rho_1:
 \Out_1(G;T)\to \prod _{v\in V }PMCG (G_v) $$ the restriction of  $\rho$.  In general $\Out_1(G;T)$ is smaller than $\Out_0(G;T)$ because elements of
$\Out_1(G;T)$ are required to map into $PMCG (G_v)$ for all $v$, and also since $\ker\rho$ may fail to be contained in $\Out_1(G;T)$ because of ``bitwists''
(which  will not concern us here, see the proof of Lemma \ref{reduc}).

To sum up, we have written $\pi$ as the product of two epimorphisms
$$\prod _{v\in V }PMCG^\partial(G_v)\stackrel{\lambda}{\twoheadrightarrow}
 \Out_1(G;T)\stackrel{\rho_1}{\twoheadrightarrow}   \prod _{v\in V }PMCG (G_v) .$$

We now study the \emph{group of twists} $\calt=\ker\rho_1=\ker\rho\cap\Out_1(G;T)$. It is generated by the commuting subgroups $\lambda_v(\calt_v)$, where $\calt_v$
 is the kernel of the projection $\pi_v:PMCG^\partial(G_v)\to  PMCG (G_v)$.
    As in Subsection \ref{relaut}, $\calt_v$ is the quotient of  $\prod _{e\in E_v}Z_{G_v}(G_e)$   by
   $Z(G_v)$ (embedded diagonally), which we call a \emph{vertex relation}. The image of an element $z\in Z_{G_v}(G_e)$ in $\Out(G;T)$ is
   the \emph{twist by $z$ around  $e$ near $v$} (note that $z$ does not have to belong to $G_e$).
  For instance, in the case of the amalgam considered above, it acts as the identity on $G_v$ and as conjugation by $z$ on $G_w$.

The group $\calt$ is generated by the product $\prod _{e\in E}Z_{G_{o(e)}}(G_e)$.
A complete set of relations is given by the vertex relations $Z(G_v)$ (with $Z(G_v)$ embedded diagonally into the factors $\prod Z_{G_{o(e)}}(G_e)$
such that $o(e)=v$) and the \emph{edge relations} $Z(G_e)$ (with $Z(G_e)$ embedded diagonally into the  factors $Z_{G_v}(G_e)$ and $Z_{G_w}(G_{\bar e})$ if
$e$ is an oriented edge $vw$ and $\bar e=wv$). In the case of an amalgam $G=G_v*_{G_e}G_w$, the edge relation simply says that conjugating both $G_v$ and
$G_w$ by $z\in Z(G_e)$ defines an inner automorphism of $G$.

In other words, $\calt $ is the quotient of $$\prod _{e\in E}Z_{G_{o(e)}}(G_e)$$ by the image of $$\prod _{v\in V}Z({G_v})\times\prod _{\varepsilon\in\cale}Z(G_\varepsilon),$$
where the products are taken over all oriented edges,
all vertices, and all non-oriented edges respectively.

Dividing $\prod _{e\in E}Z_{G_{o(e)}}
(G_e)$ by the vertex relations yields $\prod _{v\in V }PMCG^\partial(G_v)$. The edge relations generate the kernel of
$\lambda:\prod _{v\in V }PMCG^\partial(G_v)\to \Out_0(G;T)$. Note that    $\ker\lambda\inc\ker\pi=\prod_{v\in V}\calt_v$.

  \begin{example*}
  In the case of an amalgam, $\calt$ is the image of the map $p:Z_{G_v}(G_e)\times Z_{G_w}(G_e)\to\Out(G)$ sending $(a,b)$ to the class of the automorphism acting on $G_v$ as conjugation by $b$ and on $G_w$ as conjugation by $a$. The kernel of $p$ is generated by the elements $(a,1)$ with $a\in Z(G_v)$ and $(1,b)$ with $b\in Z(G_w)$ (vertex relations), together with the elements $(c,c)$ with $c\in Z(G_e)$ (edge relations). We have $\calt_v=Z_{G_v}(G_e)/Z(G_v)$ and $\calt_w=Z_{G_w}(G_e)/Z(G_w)$. The kernels of $\lambda:PMCG^\partial(G_v)\times PMCG^\partial(G_w)$, and of its restriction to $\calt_v\times \calt_w$, are generated by $Z(G_e)$.
\end{example*}

The following lemma will be used in Section \ref{tfrh}.
\begin{lem} \label {disj}
Let $W\inc V$. Assume
  that $G_v$ has trivial center if $v\notin W$, and that  $\Gamma$ has no  edge with both endpoints in $W$. Then the map $\lambda_W:\prod_{v\in W }PMCG^\partial(G_v)\to \Out(G;T)$ is injective.
\end{lem}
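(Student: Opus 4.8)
The plan is to analyze the kernel of $\lambda$ restricted to $\prod_{v\in W}PMCG^\partial(G_v)$ using the explicit description of $\ker\lambda$ given just above the lemma, namely that $\ker\lambda$ is generated (inside $\prod_{v\in V}PMCG^\partial(G_v)$) by the \emph{edge relations}: for each non-oriented edge $\varepsilon=vw$ of $\Gamma$, the image of the diagonal embedding of $Z(G_\varepsilon)$ into the factor $Z_{G_v}(G_e)$ (contributing to $PMCG^\partial(G_v)$) and the factor $Z_{G_w}(G_{\bar e})$ (contributing to $PMCG^\partial(G_w)$). So an element of $\prod_{v\in W}PMCG^\partial(G_v)$ lies in $\ker\lambda_W$ exactly when it lies in the subgroup of $\prod_{v\in V}PMCG^\partial(G_v)$ generated by these edge relations. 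First I would set up this reformulation carefully, identifying $\prod_{v\in W}PMCG^\partial(G_v)$ with the obvious subgroup of the full product (those tuples that are trivial in every coordinate $v\notin W$).

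Next I would use the two hypotheses. The hypothesis that $\Gamma$ has no edge with both endpoints in $W$ means that every edge relation $Z(G_\varepsilon)$ involves at most one factor $PMCG^\partial(G_v)$ with $v\in W$: indeed $\varepsilon=vw$ with $v,w$ not both in $W$, so at least one endpoint, say $w$, lies in $V\setminus W$, and the edge relation ``spreads'' the element $c\in Z(G_\varepsilon)$ as a twist around $\bar e$ near $w$, which is a nontrivial coordinate in the $PMCG^\partial(G_w)$ factor. Thus if a product $\sigma$ of edge relations is to lie in the subgroup $\prod_{v\in W}PMCG^\partial(G_v)$ — i.e.\ have trivial component in every factor indexed by $V\setminus W$ — all the contributions to the $V\setminus W$ coordinates must cancel. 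The key point is then that within a single factor $PMCG^\partial(G_w)$ with $w\notin W$, the edge relations contribute twists $(\mathrm{id};\ldots,c_\varepsilon,\ldots)$ around the various incident edges $\varepsilon$ at $w$; using that these live in $\calt_w$, the quotient of $\prod_{e\in E_w}Z_{G_w}(G_e)$ by $Z(G_w)$, and that $Z(G_w)=\{1\}$ by the first hypothesis, the only way to get the trivial element of $PMCG^\partial(G_w)$ from such a combination is for each $c_\varepsilon$ (the total contribution around each incident edge $\varepsilon$) to be trivial.

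I would then run this cancellation argument edge by edge: starting from the requirement that the $G_w$-component vanish for every $w\in V\setminus W$, deduce that every edge relation generator appearing in $\sigma$ with the corresponding $c\in Z(G_\varepsilon)$ must in fact be trivial (since its nontrivial endpoint contribution cannot be cancelled, there being no edge relations with two endpoints in $W$ and $Z(G_w)$ trivial to kill it on the $w$ side). Hence $\sigma$ is trivial, so $\ker\lambda_W$ is trivial, i.e.\ $\lambda_W$ is injective. The main obstacle, and where I expect to spend the real effort, is making the ``cancellation edge by edge'' step rigorous: the generating set for $\ker\lambda$ consists of the diagonal images of the $Z(G_\varepsilon)$, but an arbitrary element of the generated subgroup is a product of these, and because the groups $PMCG^\partial(G_v)$ need not be abelian I must be careful about how contributions in different factors interact under the multiplication law of $\prod_v PMCG^\partial(G_v)$. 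The cleanest route is probably to project to each factor $PMCG^\partial(G_w)$ with $w\in V\setminus W$ separately, observe that the projection of the relevant edge relation generator lands in $\calt_w$ and indeed in the specified coordinate $Z_{G_w}(G_e)$ of $\prod_{e\in E_w}Z_{G_w}(G_e)$ before quotienting by $Z(G_w)=1$, and argue that since each edge $\varepsilon$ has a \emph{unique} endpoint-coordinate in this factor, the map ``record the $Z(G_\varepsilon)$-contribution'' is well defined and additive, forcing each contribution to vanish; then feed this back to conclude $\sigma=1$ in $\prod_{v\in W}PMCG^\partial(G_v)$ as well, using once more that no edge relation couples two $W$-vertices.
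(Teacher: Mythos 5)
Your proposal is correct and follows the same route as the paper's proof: reduce to $\ker\lambda\cap\prod_{v\in W}\calt_v$, write an element of $\ker\lambda$ as a product $\prod_\varepsilon z_\varepsilon$ of edge relations, and use that $Z(G_w)=\{1\}$ for $w\notin W$ makes $\prod_{e\in E_w}Z(G_e)\to\calt_w$ injective, forcing each $z_\varepsilon$ to vanish because every edge has an endpoint outside $W$. The non-commutativity concern you raise is resolved exactly as you suspect, since the whole computation happens in $\prod_v\calt_v$ and the edge relations land in distinct direct factors of each $\prod_{e\in E_w}Z_{G_w}(G_e)$, so the ``record the $Z(G_\varepsilon)$-contribution'' map is well-defined and additive.
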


\begin{proof}
The kernel of  $\lambda_W$ is the intersection of $\ker\lambda$ with $\prod_{v\in W }\calt_v\inc \prod_{v\in V }\calt_v$.
If $v\notin W$, the group $G_v$ has trivial center, so the product  $\prod _{e\in E_v}Z
(G_e)$, taken over all edges with origin $v$, injects into  $\calt_v$. This implies that, if a product $z=\prod_{\varepsilon\in\cale}z_\varepsilon$ with  $z_\varepsilon\in Z(G_\varepsilon)$
maps to $\prod_{v\in W }\calt_v$,  it cannot involve   edges having an endpoint outside of $W$. Thus $z$ is trivial since all edges are assumed to have an endpoint not in $W$.
\end{proof}

\begin{rem} If edges with both endpoints in $W$ are allowed,  the proof shows that the kernel of $\lambda_W$ is generated by the edge relations associated to   these edges.
\end{rem}

\section{Getting rid of the rigids}\label{sec:rid}

Let $G$ be a group
hyperbolic relative to  a family of  finitely generated  subgroups $\calp=\{P_1,\dots,P_k\}$, and   one-ended relative to $\calp$.
  In this section we assume that no $P_i$ is virtually cyclic
(see Subsection \ref{vcpi} for a generalization).

We consider the canonical elementary JSJ tree $T$ relative to $\calp$ (see Subsection \ref{JSJ}). It is invariant under  $\Out(G;\calp)$, so $\Out(G;\calp)\leqslant \Out(G;T)$, and
bipartite: each edge joins a vertex with elementary stabilizer to a  vertex with non-elementary (rigid or QH) stabilizer. In particular, $T$
cannot be a line; we assume that it is not a point.

As above, we consider the quotient graph of groups $\Gamma=T/G$,  with vertex set $V$. Just like those of  $T$, vertices of $\Gamma$ (and their groups) may
  be elementary, rigid, or QH.  We partition $V$ as $V_E\cup V_R\cup V_{QH}$ accordingly; each edge has exactly one endpoint in $V_E$.

\begin{lem}\label{tpre}
$\calt$ is generated by the groups $\lambda_w(\calt_w)$ with $w\in V_E$: twists near vertices in $V_E$ generate the whole group of twists of $T$.
\end{lem}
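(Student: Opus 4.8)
The plan is to use the presentation of the group of twists recalled above: $\calt$ is the quotient of $\prod_{e\in E}Z_{G_{o(e)}}(G_e)$ by the vertex relations $Z(G_v)$ and the edge relations $Z(G_\varepsilon)$, and in particular it is generated by the images of the coordinate subgroups $Z_{G_{o(e)}}(G_e)$, one for each oriented edge $e$. Since $\Gamma$ is bipartite with each edge having exactly one endpoint in $V_E$, every oriented edge has its origin either in $V_E$ or in $V_R\cup V_{QH}$, so it is enough to show that, for an oriented edge $e$ with origin $v_0=o(e)\in V_R\cup V_{QH}$, the corresponding generators — the twists around $e$ near $v_0$ — already lie in the subgroup of $\calt$ generated by the twists near vertices of $V_E$.

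First I would pin down $Z_{G_{v_0}}(G_e)$ for such an edge. Lifting to $T$, the corresponding edge $\varepsilon$ joins $v_0\in A_0$ to a vertex $v_1\in A_1$, and by the structure of the tree of cylinders (Subsection \ref{JSJ}) the edge group $G_\varepsilon=G_e$ is maximal among the elementary subgroups contained in $G_{v_0}$. As $G_e$ is infinite and elementary, it is contained in a unique maximal elementary subgroup $\widehat{G_e}$ of $G$, and $Z_G(G_e)\inc\widehat{G_e}$. Now $\widehat{G_e}\cap G_{v_0}$ is an elementary subgroup of $G_{v_0}$ containing $G_e$, hence equals $G_e$ by maximality; therefore $Z_{G_{v_0}}(G_e)=Z_G(G_e)\cap G_{v_0}\inc\widehat{G_e}\cap G_{v_0}=G_e$, which forces $Z_{G_{v_0}}(G_e)=Z(G_e)$.

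It then remains to invoke the edge relation at $\varepsilon$. Given $z\in Z_{G_{v_0}}(G_e)=Z(G_e)=Z(G_\varepsilon)$, the element of $\prod_{f\in E}Z_{G_{o(f)}}(G_f)$ whose only nontrivial coordinates are those indexed by $e$ and $\bar e$, both equal to $z$, is an edge relator, so its image in $\calt$ is trivial; hence the twist around $e$ near $v_0$ by $z$ equals the inverse of the twist around $\bar e$ near $v_1$ by $z$. Since $v_1\in A_1$ is an elementary vertex, this last twist lies in $\lambda_{v_1}(\calt_{v_1})$. Letting $e$ range over all oriented edges with origin outside $V_E$, we conclude that every generator of $\calt$ is a twist near a vertex of $V_E$, that is, $\calt$ is generated by the subgroups $\lambda_w(\calt_w)$, $w\in V_E$.

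The only real point is the identification $Z_{G_{v_0}}(G_e)=Z(G_e)$: it uses crucially that, in the tree of cylinders, $G_e$ is maximal elementary \emph{inside} the non-elementary vertex group $G_{v_0}$ and not merely inside $G$, together with the standard containment $Z_G(H)\inc\widehat H$ for infinite elementary $H$. Once this is in place, the edge relations finish the argument with no further computation.
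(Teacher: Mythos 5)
Your proof is correct and follows essentially the same route as the paper: identify $Z_{G_v}(G_e)=Z(G_e)$ for $v\in V_R\cup V_{QH}$ via maximality of $G_e$ in $G_v$, then use the edge relation to convert a twist around $e$ near $v$ into a twist near the elementary endpoint. The only difference is that you spell out in detail why $Z_{G_v}(G_e)\subset G_e$ (via $Z_G(G_e)\subset\widehat{G_e}$ and $\widehat{G_e}\cap G_v = G_e$), whereas the paper states this inclusion with a brief appeal to maximality; your added justification is accurate and fills in the gap.
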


\begin{proof}
If  $e=vw$ is any edge with $v\in V_{QH}\cup V_R$ (and therefore $w\in V_E$), then $G_e$ is a maximal elementary subgroup of $G_v$, so $Z_{G_v}
(G_e)=Z(G_e)$  since $Z_{G_v}(G_e)\inc G_e$. We can then   use edge relations to view twists around $e$ near $v $ as twists near $w$.
\end{proof}

\begin{lem}\label{reduc}  Let $\Out^r(G)$ be the image of the restriction $$\lambda_{E, QH}:\prod _{v\in V_E\cup V_{QH}}PMCG^\partial(G_v)\to \Out(G)
 $$
 of $\lambda$
  (see Subsection \ref{autt}).
 Then $\Out^r(G)$  is contained in $\Out(G;\calp)$ with
 finite index.
\end{lem}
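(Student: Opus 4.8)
The plan is to prove the two assertions separately: first that $\Out^r(G) \subseteq \Out(G;\calp)$, and then that the index is finite. For the inclusion, I would take a generating set for $\prod_{v\in V_E\cup V_{QH}}PMCG^\partial(G_v)$ coming from the $\lambda_v$ with $v$ elementary and $v$ QH, and check that each generator, viewed in $\Out(G)$, maps each $P_i$ to a conjugate. Here I would use the structure of $T$ recalled in Subsection \ref{JSJ}: each non-virtually-cyclic $P_i$ either is the stabilizer $G_{v_1}$ of an elementary vertex $v_1 \in A_1$ or is contained in a rigid vertex group $G_{v_0}$. For an automorphism in the image of $\lambda_v$ with $v \in V_E$, its effect on any vertex group $G_w$ with $w \ne v$ is by a conjugation, so a $P_i$ carried (up to conjugacy) by a rigid vertex is sent to a conjugate; a $P_i$ equal to an elementary vertex group is mapped to a conjugate because $PMCG^\partial(G_v)$ is defined using $\tau_{a_i}$ on the incident edge groups, and since no $P_i$ is virtually cyclic one checks (using that $G_e$ is maximal elementary in the rigid/QH neighbour but that $P_i = G_{v_1}$ is a whole vertex group) that the automorphism restricted to $G_{v_1}$ is inner-by-$G$, hence $P_i$ goes to a conjugate. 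For QH vertices $v$, no $P_i$ is conjugate into $G_v$ except via a virtually cyclic intersection, and $\lambda_v$ acts by conjugation on all other vertex groups, so again the peripheral structure is preserved. Thus every generator lies in $\Out(G;\calp)$, giving $\Out^r(G)\subseteq\Out(G;\calp)$.

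For the finite-index statement, the key input is the canonicity of $T$: since $T$ is the canonical elementary JSJ tree relative to $\calp$, it is $\Out(G;\calp)$-invariant, so $\Out(G;\calp)\leqslant \Out(G;T)$, and passing to the finite-index subgroup $\Out_0(G;\calp):=\Out(G;\calp)\cap\Out_0(G;T)$ acting trivially on $\Gamma$, it suffices to show $\Out_0(G;\calp)$ is contained in $\Out^r(G)$ up to finite index. Using the machinery of Subsection \ref{autt}, there is the exact sequence relating $\Out_0(G;T)$, the group of twists $\calt$, and $\rho=\prod\rho_v$ recording the action on vertex groups. By Lemma \ref{tpre}, $\calt$ is generated by twists near $V_E$, all of which lie in $\Out^r(G)$ (they come from $\lambda_w(\calt_w)$, $w\in V_E$); and modulo $\calt$, an element of $\Out_0(G;\calp)$ is determined by the tuple $(\rho_v)$ of its actions on vertex groups. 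For $v\in V_E\cup V_{QH}$ the image already lies in $PMCG(G_v)$ after passing to a finite-index subgroup (for QH $v$ this uses the surface/orbifold description; for elementary $v$ it is automatic up to finite index since $\Out$ of a virtually polycyclic-type or parabolic group acts with finite image on the relevant data), so these are covered by $\lambda_{E,QH}$. The remaining contribution is $\rho_v$ for $v\in V_R$ rigid. The crucial point — and the main obstacle — is that for a rigid vertex group the allowed automorphisms are severely restricted: a rigid $G_v$ does not split over an elementary subgroup relative to the parabolics and incident edge groups, so by the Bestvina--Paulin/Rips analysis of \cite{GL6} the subgroup of $\Out(G_v)$ consisting of automorphisms that are $PMCG$-type on the incident edge groups (and respect parabolics) is finite. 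Hence after passing to a further finite-index subgroup we may assume every element of $\Out_0(G;\calp)$ acts on each rigid $G_v$ as an inner automorphism coming from $G$, i.e.\ its $\rho_v$-component is trivial there.

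Putting these together: after replacing $\Out(G;\calp)$ by a finite-index subgroup, every element has trivial action on rigid vertex groups (mod inner), acts through $PMCG$ on elementary and QH vertex groups, and differs from an element of the image of $\lambda_{E,QH}$ by an element of $\calt$, which by Lemma \ref{tpre} already lies in $\Out^r(G)$. Therefore this finite-index subgroup is contained in $\Out^r(G)$, so $\Out^r(G)$ has finite index in $\Out(G;\calp)$. I expect the genuinely delicate step to be the rigidity statement for $V_R$: one must carefully invoke the JSJ rigidity from \cite{GL6} in the relative setting and also deal with ``bitwists'' and the precise relationship between $\ker\rho$ and $\Out_1(G;T)$, making sure the finitely many coset representatives can be chosen inside $\Out^r(G)$ rather than merely inside $\Out_0(G;T)$; the elementary and QH pieces, by contrast, are handled routinely by the formalism of Subsection \ref{autt} together with residual properties of the peripheral groups.
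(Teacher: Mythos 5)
Your overall strategy mirrors the paper's: show the inclusion directly, pass to $\Out_0(G;\calp)$, use $\rho=\prod\rho_v$ together with Proposition~4.1 of \cite{GL6} to get that $\rho_v$ has finite image for rigid $v$ and lands in $PMCG(G_v)$ up to finite index otherwise, and finally deal with the kernel of $\rho$ via Lemma~\ref{tpre}. This is the right skeleton and the right key inputs.

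However, there is a genuine gap at the final step, and it is precisely the one you flag but do not resolve. After passing to a finite-index subgroup $H\leqslant\Out_0(G;\calp)$ so that $\rho_v(\Phi)$ is trivial for rigid $v$ and lies in $PMCG(G_v)$ for $v\in V_E\cup V_{QH}$, you pick $\Psi\in\Out^r(G)$ with $\rho(\Psi)=\rho(\Phi)$ and then assert that $\Phi\Psi^{-1}$ lies in $\calt$. A priori $\Phi\Psi^{-1}$ only lies in $\ker\rho$, and as recalled in Subsection~\ref{autt}, $\ker\rho$ may strictly contain the group of twists $\calt$ because of bitwists (indeed $\ker\rho$ need not even lie in $\Out_1(G;T)$ in general). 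Lemma~\ref{tpre} only tells you that $\calt\inc\Out^r(G)$; it says nothing about $\ker\rho$. The paper closes exactly this gap by the observation (from Subsection~3.3 of \cite{GL6}) that for the JSJ tree in question the incident edge groups at non-elementary vertices equal their own normalizers in the vertex group, which forces $\ker\rho=\calt$, so there are no bitwists to worry about. Without importing that fact your argument does not go through.

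A smaller point: your justification for why $\rho_v$ lands in $PMCG(G_v)$ up to finite index for elementary $v$ (``$\Out$ of a parabolic group acts with finite image on the relevant data'') is incorrect --- $\rho_v$ can have infinite image when $G_v\cong\Z^n$, for instance. The correct statement, again from \cite[Prop.~4.1]{GL6}, is that the image of $\rho_v$ restricted to $\Out_0(G;\calp)$ \emph{contains} $PMCG(G_v)$ with finite index; this is what allows the finite-index reduction, not finiteness of the image.
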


\begin{proof}
We first prove $\Out^r(G)\inc \Out(G;\calp)$.
 Recall that $P_i$ is assumed not to be virtually cyclic, so (up to conjugacy) it  is  equal to an elementary vertex group or is contained in a rigid vertex group.   In either case elements of $\Out^r(G)$ map $P_i$ to a conjugate (trivially if $P_i$ is contained in a rigid group).
In fact, $\Out^r(G)$ is contained in  $\Out_0(G;\calp)=\Out(G;\calp)\cap\Out_0(G;T)$, a finite index subgroup of  $\Out(G;\calp)$. We   show that $\Out^r(G)$ has finite index  in  $\Out_0(G;\calp)$.

Recall the maps
$\rho_v:\Out_0(G;T)\to \Out(G_v)$, and consider their restrictions to the subgroup $\Out_0(G;\calp)$. It is shown in  Proposition 4.1 of
\cite{GL6}  that the image of such a restriction  is finite if $v\in V_R$ (this is a key property of rigid vertices),
 contains $PMCG(G_v)$ with finite index   if $v\notin V_R$  (note that $PMCG(G_v)$ is   $\Out(G_v;\Inc_v^{(t)})$ in \cite{GL6};
 the assumption that no $P_i$ is virtually cyclic is used to ensure $\calb_v=\Inc_v$).

 Now consider the homomorphism $\rho :\Out_0(G;T)\to\prod _{v\in V }\Out(G_v)$. The image of $\Out^r(G)$
 is $\prod _{v\in V_E\cup V_{QH}}PMCG (G_v)$, so it has finite index in
the image of $\Out_0(G;\calp)$.
We complete the proof by showing that $\Out^r(G)$ contains $\ker\rho$.

It is pointed out in   Subsection 3.3 of \cite{GL6}  that $\ker\rho$ is equal to the group of twists $\calt$
(because,  if $v\notin V_E$, then incident edge groups are equal to their normalizer in $G_v$).
By Lemma \ref{tpre}, $\calt$ is generated by twists near vertices in $V_E$. These belong to the image of $\lambda_{E,QH}$, so $\calt\inc\Out^r(G)$.
\end{proof}

We shall now change the graph of groups $\Gamma$ into a new graph of groups $\Gamma'$. We do not change the underlying graph, or edge groups, or vertex groups $G_v$ for $v\in V_E\cup V_{QH}$, but for $v\in V_R$ we   replace $G_v$ by a   group $G'_v$ defined as follows.

If $e\in E_v$ is an incident edge, $G_e$ is a maximal elementary subgroup of $G_v$, in particular it contains the finite group $Z(G_v)$.
Consider   the groups $G_e$, for $e\in E_v$, as well as $\Z\times Z(G_v)$. All these groups contain $Z(G_v)$, and we define $G'_v$ as
their free amalgam over $Z(G_v)$,  i.e., $G'_v$ is obtained from the free product $ (*_{e\in E_v}G_e)*  \left(\Z\times Z(G_v) \right)$ by identifying all copies of $Z(G_v)$.

The  inclusion   from $G_e$ into the new vertex group $G'_v$ is the  obvious one. Note that
  $G'_v$
   is not one-ended relative to incident edge groups (because of the factor $\Z\times Z(G_v)$), and is residually finite if the $P_i$'s are (as an amalgam of residually finite groups over a finite subgroup).
We denote by $G'$ the fundamental group of $\Gamma'$.

\begin{lem} \label{rid}
The finite index subgroup $\Out^r(G)\inc \Out(G;\calp)$ is isomorphic to a subgroup of  $\Out(G')$.
\end{lem}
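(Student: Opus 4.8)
The plan is to build an explicit isomorphism from $\Out^r(G)$ onto a subgroup of $\Out(G')$ using the description of both groups via the map $\lambda$ attached to the (shared) underlying graph. First I would observe that $\Gamma'$ has the same underlying graph, the same edge groups, and the same vertex groups at all vertices of $V_E\cup V_{QH}$ as $\Gamma$; only the rigid vertex groups have changed, from $G_v$ to $G'_v$. Since the incident edge groups $G_e$ for $e\in E_v$ sit inside $G'_v$ exactly as they sat inside $G_v$ (the inclusion being ``the obvious one''), each group $PMCG^\partial(G_w)$ for $w\in V_E\cup V_{QH}$ is literally the same object whether computed inside $\Gamma$ or inside $\Gamma'$: it depends only on $G_w$ and its incident edge groups. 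Hence there is a tautological map
\[
\lambda'_{E,QH}\colon \prod_{v\in V_E\cup V_{QH}}PMCG^\partial(G_v)\to \Out(G'),
\]
and I define the candidate homomorphism $\Out^r(G)\to\Out(G')$ by sending $\lambda_{E,QH}(\Psi)$ to $\lambda'_{E,QH}(\Psi)$. The first thing to check is that this is well defined, i.e. that $\ker\lambda_{E,QH}=\ker\lambda'_{E,QH}$.

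For well-definedness I would use the combinatorial description of $\ker\lambda$ from Subsection~\ref{autt}: the kernel of $\lambda\colon\prod_{v\in V}PMCG^\partial(G_v)\to\Out_0(G;T)$ is generated by the edge relations $Z(G_\varepsilon)$, $\varepsilon\in\cale$, and $\ker\lambda_{E,QH}$ is the intersection of this kernel with the sub-product over $V_E\cup V_{QH}$. The key point is that every edge $\varepsilon$ of $\Gamma$ has exactly one endpoint in $V_E$ (the graph of groups is bipartite with $V_E$ on one side), and $V_E\subseteq V_E\cup V_{QH}$, while the other endpoint may lie in $V_R$. So an edge relation $Z(G_\varepsilon)$ for $\varepsilon=vw$ with $v\in V_R$, $w\in V_E$ involves a factor $Z(G_w)$ inside $PMCG^\partial(G_w)$ that survives in the sub-product and a factor $Z(G_v)$ that does not — but since $G_e$ is maximal elementary in $G_v$ we have $Z_{G_v}(G_e)=Z(G_e)=G_e\cap(\text{its centralizer})$, so the twist around $\varepsilon$ near $v$ is already realized as a twist near $w$ (this is exactly the content of Lemma~\ref{tpre}), and similarly $Z(G'_v)=Z(G_v)$ sits inside $G_e\subseteq G'_v$ with the same centralizer behaviour. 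Consequently the edge relations of $\Gamma$ restricted to $V_E\cup V_{QH}$ match the edge relations of $\Gamma'$ restricted to the same set: the only edges contributing are those with both ``relevant'' endpoints (i.e. the edge relation is entirely supported on $V_E\cup V_{QH}$) and for those, nothing about $G_v$, $v\in V_R$, enters. This shows $\ker\lambda_{E,QH}=\ker\lambda'_{E,QH}$, so the map $\Phi\colon\Out^r(G)\to\Out(G')$ is a well-defined homomorphism, and it is surjective onto $\Out^r(G')$ by construction.

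It remains to prove injectivity of $\Phi$, and this is the step I expect to be the main obstacle. The point is to show that the quotient $\big(\prod_{v\in V_E\cup V_{QH}}PMCG^\partial(G_v)\big)/\ker\lambda_{E,QH}$, which is literally the same quotient for $\Gamma$ and for $\Gamma'$, injects into $\Out(G')$ — equivalently, that $\lambda'_{E,QH}$ has kernel contained in what we already identified. Since $G'_v$ for $v\in V_R$ is a free amalgam over the finite subgroup $Z(G_v)$ containing the extra free factor $\Z\times Z(G_v)$, the vertex group $G'_v$ has trivial interaction with twists beyond $Z(G_v)$: more precisely, no nontrivial element of $PMCG^\partial(G_w)$, $w\in V_E\cup V_{QH}$, can become inner in $G'$ unless it was forced to by an edge relation, because in $G'$ the rigid vertices no longer ``absorb'' any conjugation — they contribute no center beyond the finite $Z(G_v)$ and the edges at a rigid vertex are now ``rigidified'' only through $Z(G_v)$. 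I would make this precise by applying the general injectivity criterion for $\lambda$ on a graph of groups (the analogue of Lemma~\ref{disj} / Proposition-type statements in Sections 2--4 of \cite{GD}) to the decomposition $\Gamma'$: after the modification, one checks the hypotheses that guarantee $\ker\lambda'$ is generated precisely by the edge relations, so that $\lambda'_{E,QH}$ is injective modulo those, and combine this with the matching of edge relations established above. The delicate bookkeeping is in controlling the centralizers $Z_{G'_v}(G_e)$ for $v\in V_R$ and checking they equal $Z(G_e)$ just as $Z_{G_v}(G_e)$ did, so that no new twists (and in particular no new kernel elements of $\lambda'$) are introduced at the rigid vertices; granting that, injectivity follows, completing the proof that $\Out^r(G)$ embeds in $\Out(G')$.
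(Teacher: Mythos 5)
Your overall plan matches the paper's proof: define $\lambda'_{E,QH}$ on the same product $\prod_{v\in V_E\cup V_{QH}}PMCG^\partial(G_v)$ and show $\ker\lambda_{E,QH}=\ker\lambda'_{E,QH}$; and you do eventually identify the decisive fact, namely that $Z(G'_v)=Z(G_v)$ and $Z_{G'_v}(G_e)=Z(G_e)=Z_{G_v}(G_e)$ for $v\in V_R$. However, the middle paragraph contains an error, and the structure is unnecessarily complicated. The claim that ``the only edges contributing are those with both endpoints in $V_E\cup V_{QH}$'' is false: $\ker\lambda_{E,QH}$ is the intersection $\ker\lambda\cap\prod_{v\in V_E\cup V_{QH}}\calt_v$, and an element of this intersection may well involve edge relations at edges with a rigid endpoint $w\in V_R$, provided the net contribution at $w$ dies modulo the vertex relation $Z(G_w)$. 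Identifying which products of edge relations land in the sub-product is not the right way to go.

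The clean argument, and the one the paper uses, bypasses this: $\calt_v$ is built only from $Z(G_v)$ and the centralizers $Z_{G_v}(G_e)$ for $e\in E_v$, and none of these change when $G_v$ is replaced by $G'_v$ at a rigid vertex. Hence the entire group $\prod_{v\in V}\calt_v$, together with the edge relations that generate the kernel of $\lambda$ (equivalently $\lambda'$) restricted to it, is literally the same for $\Gamma$ and $\Gamma'$. Intersecting with $\prod_{v\in V_E\cup V_{QH}}\calt_v$ gives $\ker\lambda_{E,QH}=\ker\lambda'_{E,QH}$ in one stroke. This equality is simultaneously well-definedness and injectivity of the induced map $\Out^r(G)\to\Out(G')$; there is no separate injectivity step, so the detour through an analogue of Lemma~\ref{disj} is not needed.
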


\begin{proof}
Since nothing changes near vertices in $V_E\cup V_{QH}$, we still have a map
$$\lambda'_{E,{QH}}:\prod _{v\in V_E\cup V_{QH}}PMCG^\partial(G_v)\to \Out(G').$$   It suffices to show $\ker\lambda'_{E,{QH}}=\ker\lambda_{E,{QH}}$.  Recall that the kernel of $\lambda_{E,{QH}}$
is the same as the kernel of its restriction  to $\prod _{v\in V_E\cup V_{QH}}\calt_v$, and similarly for $\lambda'_{E,{QH}}$.
For $v\in V_R$, the group $G'_v$ was defined so that the groups
  $Z(G_v)$ and $Z_{G_v}
(G_e)$ do not change, so $\calt_v$ does not change, and neither does the map  from $\prod_{v\in V}\calt_v$ to $\Out(G)$.  The result follows.
\end{proof}

\section{Torsion-free relatively hyperbolic groups} \label{tfrh}

This section is devoted to a proof of  Theorem \ref{gener} in the torsion-free case.

\begin{thm} \label{thm:t-f}
 Let $G$ be a torsion-free group hyperbolic relative to  a family of   proper   finitely generated residually finite subgroups $\calp=\{P_1,\dots,P_k\}$, none of which is   cyclic.   If $G$ is one-ended relative to $\calp$, then $\Out(G; \calp)$ is residually finite.
 \end{thm}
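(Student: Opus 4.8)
The plan is to carry out the two-step reduction outlined in the introduction, using the machinery already assembled. First, by Lemma \ref{reduc}, it suffices to prove that the finite-index subgroup $\Out^r(G)\leqslant\Out(G;\calp)$ is residually finite, and by Lemma \ref{rid} we may work inside $\Out(G')$, where $\Gamma'$ is the graph of groups obtained from the canonical JSJ decomposition $\Gamma$ by replacing each rigid vertex group $G_v$ ($v\in V_R$) by the free amalgam $G'_v=(*_{e\in E_v}G_e)*(\Z\times Z(G_v))$ over $Z(G_v)$; since $G$ is torsion-free, $Z(G_v)=\{1\}$, so $G'_v$ is a free product of the incident edge groups with an extra free $\Z$ factor. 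The point of this replacement is that $G'_v$ is residually finite (amalgam of residually finite groups over a finite, here trivial, subgroup) and is no longer one-ended relative to its incident edge groups.

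Next I would perform the second step: kill (or shrink) the elementary vertex groups and edge groups to make the ambient graph-of-groups group into a residually finite group with infinitely many ends. Concretely, for each elementary vertex $w\in V_E$ the group $G_w$ is either (a conjugate of) some $P_i$, which is residually finite by hypothesis, or infinite cyclic; each incident edge group $G_e\leqslant G_w$ is cyclic (torsion-free case), so it is separable in the residually finite group $G_w$. Using residual finiteness of the $P_i$'s and of the surface groups $G_v$ ($v\in V_{QH}$), together with the algebraic Lemma \ref{alg} (to control the group of twists $\calt$, which by Lemma \ref{tpre} is generated by twists near $V_E$ and sits inside a product of centralizers $Z_{G_w}(G_e)$), I would choose finite-index normal subgroups of the vertex groups — characteristic where needed, and, via Lemma \ref{omcg}, of the form $n\calt_{G_v}$ on the QH twists — so that after quotienting, the edge groups become finite and the new vertex groups stay residually finite. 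The resulting graph of groups $\Gamma''$ then has finite edge groups and residually finite vertex groups, hence (by the Serre fact quoted in Section 2) its fundamental group $G''$ is residually finite; and because at least one edge group has been made finite while $\Gamma''$ stays a nontrivial graph of groups, $G''$ has infinitely many ends (outside the degenerate cases, which are handled directly). By \cite{M-O-norm_aut}, $\Out(G'')$ is then residually finite.

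Finally I would assemble the approximation argument: the chosen subgroups can be taken invariant under $\Out^r(G)$ (this is where characteristic-ness and the $\Out(G;T)$-invariance of $\Gamma$ enter, together with Lemma \ref{disj} to see that the relevant $PMCG^\partial(G_v)$'s embed and that twists are detected in the quotient), so the construction yields a homomorphism $\Out^r(G)\to\Out(G'')$. Running over all sufficiently deep choices of the finite-index data, these homomorphisms separate points: given $1\ne\Phi\in\Out^r(G)$, writing $\Phi=\lambda_{E,QH}((\alpha_v;\dots))$ via Lemma \ref{rid}, one uses that $\alpha_v$ acts nontrivially on some $G_v$ or that some twisting element is nontrivial, and chooses the data avoiding the corresponding finitely many elements (using Lemmas \ref{rfd}, \ref{omcg}, \ref{alg} at the vertices and the residual finiteness of $\Out(G'')$) to get a quotient where the image of $\Phi$ survives. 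Hence $\Out^r(G)$, and therefore $\Out(G;\calp)$, is residually finite.

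The main obstacle I expect is the second step — arranging that the finite-index normal subgroups of the elementary and QH vertex groups are simultaneously (i) compatible along edges so that they induce a quotient graph of groups with \emph{finite} edge groups, (ii) $\Out^r(G)$-invariant, and (iii) deep enough that the group of twists $\calt$ injects appropriately into $\Out(G'')$; the twist bookkeeping (keeping $\ker\lambda$ under control, as in Lemma \ref{disj} and the surrounding discussion, so that no nontrivial twist is accidentally killed) is the delicate part, and the use of Lemma \ref{alg} together with Lemma \ref{omcg} to handle the free-abelian twist subgroups of the QH pieces is what makes it go through.
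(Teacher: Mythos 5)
Your first step (reduce to $\Out^r(G)$ via Lemma \ref{reduc} and embed it in $\Out(G')$ via Lemma \ref{rid}) and the broad shape of the second step (make edge groups finite so that the resulting $G''$ has infinitely many ends and is residually finite, then apply the result of Minasyan--Osin) match the paper. However, there are two genuine gaps in your sketch.

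First, you invoke Lemma \ref{alg} ``to control the group of twists'', but that lemma requires the factors $T_v$ to be free abelian. Here, for $v\in V_E$ the vertex group $G_v$ is (up to conjugacy) one of the $P_i$'s, which is only assumed residually finite, so $\calt_v$ is a quotient of $\prod_e Z_{G_v}(G_e)/Z(G_v)$ and need not be abelian at all. Lemmas \ref{alg} and \ref{omcg} belong to the hyperbolic/toral setting of Sections 3 and 7, where the twist groups really are free abelian; they have no role in the proof of Theorem \ref{thm:t-f}, and the argument cannot be made to go through by inserting them. Relatedly, you describe the passage from $G_v$ to $G''_v$ at a QH vertex as taking a finite-index normal subgroup and refer to subgroups ``of the form $n\calt_{G_v}$'' — but the map $\pi_1(\Sigma_v)\to G''_v$ has \emph{infinite} kernel (the normal closure of the $c_e^{n_e}$); residual finiteness of $G''_v$ comes from it being a non-elementary discrete subgroup of $\mathrm{Isom}(\Hy^2)$, not from any finite-index argument.

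Second, you omit the key reduction that makes the detection step work. Given $1\ne\Phi=\lambda_{E,QH}((\Phi_v))$, one must first dispose of the QH part: $PMCG(G_v)$ is residually finite for $v\in V_{QH}$ because $G_v$ is free of finite rank, so one may assume $\Phi$ maps trivially to every $PMCG(G_v)$, i.e.\ $\Phi_v\in\calt_v$ for $v\in V_{QH}$; then by Lemma \ref{tpre} one can use edge relations to choose a representative with $\Phi_v=1$ for all $v\in V_{QH}$, hence $\Phi_u\ne1$ for some $u\in V_E$. Only after this normalization does Lemma \ref{disj} (applied in $\Gamma''$ with $W=V_E$; the hypotheses hold because $G''_v$ has trivial center for $v\notin V_E$) guarantee that $\Phi$ survives in $\Out(G'')$. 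Without this reduction your phrase ``one uses that $\alpha_v$ acts nontrivially on some $G_v$ or that some twisting element is nontrivial'' does not yield a well-defined element to separate, since a nontrivial $\Phi$ could have $\Phi_v\in PMCG^\partial(G_v)$ for $v\in V_{QH}$ that is killed when passing to the orbifold group $G''_v$.
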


See Remark \ref{virtcyc} for the case   when some of the $P_i$'s are allowed to be cyclic.

\begin{proof}

As in the previous section, let $\Gamma$ be the canonical JSJ decomposition   of $G$. First suppose that
 $\Gamma$ is trivial (a single vertex $v$). If $v$ is rigid, then $\Out(G;\calp)$ is finite (see \cite{GL6}). If $v$ is QH, then $G$ is a closed surface group and $\Out(G)$ is residually finite
by \cite{Grossman,A-K-T}.   The   case $v\in V_E$ cannot occur since $P_i\ne G$. From now on, we suppose that $\Gamma$ is nontrivial.

By Lemma \ref{reduc}, it is enough to show that $\Out^r(G)$ is residually finite.
Given a nontrivial $\Phi\in\Out^r(G)$, we want to map $\Out^r(G)$ to a     finite group without killing $\Phi$.
Note that it is enough to map  $\Out^r(G)$ to a   residually finite group without killing $\Phi$.

Using Lemma \ref{rid}, we view $\Out^r(G)$ as a group of  automorphisms of $G'$. To simplify notation, we will not write the superscripts $'$ unless necessary.

Recall the epimorphisms
$$
  \prod _{v\in V_E\cup V_{QH}}PMCG^\partial(G_v) \to
 \Out^r(G)\to\prod _{v\in V_E\cup V_{QH}}PMCG (G_v)
 $$
 induced by  $\lambda_{E,{QH}}$ and $\prod_{v\in V_E\cup V_{QH}}\rho_v$.
Write $\Phi$ as the image, under $\lambda_{E,{QH}}$, of a tuple $(\Phi_v) \in   \prod _{v\in V_E\cup V_{QH}}PMCG^\partial(G_v)$.
 If $v\in V_{QH}$,
the group  $PMCG(G_v)$ is residually finite since $PMCG(G_v)\inc\Out(G_v)$ with $G_v$   free of finite rank, so   we may assume that $\Phi$ maps trivially to $PMCG(G_v)$  for $v\in V_{QH}$.
This means that $\Phi_v\in \calt_v\inc PMCG^\partial(G_v)$ for $v\in V_{QH}$.
By Lemma \ref{tpre}, we may use edge relations
to find a   representative of $\Phi$ with $\Phi_v=1$ for $v\in V_{QH}$.  We fix such a representative, and we choose
  $u\in V_E$ with $\Phi_{u}\ne1$.

 Next we   fix a characteristic finite index subgroup $N_v\lhd G_v$
for each $v\in V_E$,   and we denote $H_v=G_v/N_v$. Since $G_v$ is assumed to be residually finite (it is cyclic or conjugate to a $P_i$), we may also require that, if $e=vw$ is an edge with $w\in V_{QH}$,
then the image of $G_e\cong\Z$
in  $H_v $ has order $n_e\ge 4$.  As in the   proof of Lemma  \ref{rfd},
we also require that $\Phi_{u}$ maps to a non-trivial element under the natural homomorphism from
$PMCG^\partial(G_{u})$ to $PMCG^\partial(H_{u})$.

We now construct a new graph of groups $\Gamma''$, with the same underlying graph as $\Gamma$ and $\Gamma'$, with vertex groups $G''_v=H_v$ at vertices $v\in V_E$.
We describe  the edge groups, and  the other vertex groups. The inclusions from edge groups to vertex groups  will be  the obvious ones.

Given an edge  $e=vw$ with $v\in V_E$, its group $G''_e$ in $\Gamma''$ is the image of $G_e$ in $H_v$, a cyclic group of order $n_e\ge4$.

 If $v\in V_R$, its group in $\Gamma'$ is the free product $ (* G_e)*\Z$, with the first product taken over all $e\in E_v$.
We define its new group as
$G''_v=(* G''_e)*\Z$.

If $v\in V_{QH}$, its group in $\Gamma$ and $\Gamma'$ is
a surface group
$\pi_1(\Sigma_v)$, with boundary components of $\Sigma_v$ corresponding to incident edges (see Subsection \ref{JSJ}).
We define $G''_v$ as the fundamental group of the closed orbifold obtained from $\Sigma_v$ by replacing the boundary component associated to an edge $e$ by a conical point of order $n_e$.
The assumption $n_e\ge4$ ensures  that the orbifold is hyperbolic (see \cite{Th}, 13.3.6), so $G''_v$ is a non-elementary subgroup of   Isom$(\Hy^2)$.
In particular, $G''_v$ has trivial center
 and is residually finite.

The fundamental group $G''$ of $\Gamma''$ is a quotient of $G'$. It is residually finite, as the fundamental group of a graph of
groups with residually finite vertex groups and finite edge groups.
We show that the splitting $\Gamma''$ of $G''$ is nontrivial.

If $v\notin V_ E$, the group $G''_v$ is infinite (because of the factor $\Z$ in the rigid case), so triviality would imply that $u$ has valence 1 and the   incident edge group in $\Gamma''$ is equal to $G''_{u }$. This is impossible because $PMCG^\partial(G''_{u})$ is non-trivial (it contains the image of $\Phi_u$, which is assumed to be non-trivial).

Since $\Gamma''$ is a   splitting  over finite groups,
$G''$ has infinitely many ends.
By \cite[Thm.\  1.5]{M-O-norm_aut}, $\Out(G'')$ is residually finite. We conclude the proof by constructing a map from $\Out^r(G)$ to $\Out(G'')$ mapping $\Phi$ non-trivially.

Let $v\in V_E\cup V_{QH}$.
Since the kernel of the map $G_v\to G''_v$ is invariant under $PMCG(G_v)$, there is an induced map from $PMCG^\partial(G_v)$ to $PMCG^\partial(G''_v)$.
Similarly, the kernel of the projection from $G'$ to $G''$ is invariant under the image of $\Out^r(G)$ in $\Out(G')$ (see Lemma \ref{rid}),
so there is an induced map from $\Out^r(G)$ to $\Out(G'')$. These maps fit in a commutative diagram
 $$
 \xymatrix{
  \prod _{v\in V_E\cup V_{QH}}PMCG^\partial(G_v)
  \ar[r]\ar[d]&  \prod _{v\in V_E\cup V_{QH}}PMCG^\partial(G''_v) \ar[d]  \\
  \Out^r(G)\ar[r]& \Out(G'').
}
$$
Since $\Phi_v=1$ for $v\in V_{QH}$, and $\Phi_{u}$ maps non-trivially   to $PMCG^\partial(H_{u})$,  Lemma \ref{disj} (applied in $\Gamma''$ with $W=V_E$) implies that $\Phi$ maps  non-trivially to $\Out(G'')$.
\end{proof}

The main difficulty in extending this proof to groups with torsion lies in defining $G''_v$ for $v\in V_{QH}$ when $G_v$   contains a nontrivial finite normal subgroup $F$.  This may be done using small cancellation techniques, but we prefer to give a   different proof   (using Grossman's method and Dehn fillings) in the general case.

\begin{rem}\label{virtcyc} Theorem \ref{thm:t-f} holds if the $P_i$'s are allowed to be   cyclic
( recall that an infinitely-ended $G$ may   become one-ended relative to $\calp$ if   cyclic $P_i$'s are added).
There is a technical complication due to the fact that, if $G_v=\pi_1(\Sigma_v)$ is QH, there may exist boundary components of  $\Sigma_v$ whose fundamental group equals some $P_i$ (up to conjugacy)
 but is not an incident edge group. Because of this, one must  change the definition of $PMCG^\partial(G_v)$ and $PMCG (G_v)$ slightly. See Subsection \ref{vcpi} for details.
\end{rem}

\section{Groups with torsion}\label{gtor}
The goal of this section is to prove Theorem \ref{gener} (Subsections \ref{conc} and \ref {vcpi}) and   Corollary~\ref{main1} (Subsection \ref{pfcor}).

\subsection{Finitary fillings of relatively hyperbolic groups}

\begin{df}\label{def:fin_fil} Let $\mathcal{I}$ be a property of groups, let $G$ be a group, and let  $\{H_\lambda\}_{\lambda \in \Lambda}$ be a non-empty collection of subgroups of $G$. We will say that
\emph{most finitary fillings of $G$ with respect to $\{H_\lambda\}_{\lambda \in \Lambda}$ have property $\mathcal{I}$} provided
there is a finite subset
$S \subset G\setminus\{1\}$ such that, for any family of finite index normal subgroups $N_\lambda \lhd H_\lambda$
with $N_\lambda \cap S=\emptyset$ for all $\lambda\in\Lambda$,   the following two conditions hold:
\begin{itemize}
\item
for each $\lambda\in\Lambda$ one has $N \cap H_\lambda=N_\lambda$, where $N:=\llangle N_\lambda \mid \lambda \in \Lambda\rrangle^G$;
\item the quotient $G/N$  satisfies $\mathcal I$.
\end{itemize}
Any quotient $G/N$ as above will be called a \emph{finitary filling of} $G$ with respect to $\{H_\lambda\}_{\lambda \in \Lambda}$. The finite subset $S$ will be called the  \emph{obstacle subset}.
\end{df}

In this work we will mainly be concerned with the case when $\mathcal I$ is the property of being residually finite, or conjugacy separable.

\begin{rem}\label{rem:gr_gps_w_f_edges-ceg}
If $G=A*_C B$, where $C$ is finite and $A,B$ are residually finite, then most finitary fillings of $G$ with respect to any of the families
$\{A \}$, $\{B\}$ or $\{A,B\}$ are residually finite (it suffices to take $S:=C\setminus\{1\}$ and use the universal property of amalgamated free products;
$G/N$ will be an amalgam of residually
finite groups over $C$).
More generally, if $G$ is the fundamental group of a finite graph of groups with finite edge groups and residually finite vertex groups, then most
finitary fillings of $G$ with respect to any
collection of vertex groups will be residually finite.
\end{rem}

In this subsection, and the next, we will consider a graph of groups $  \Gamma$, with fundamental group $G$, satisfying the following assumption
(this will be applied to the graph $\Gamma'$ constructed in Section \ref{sec:rid}).

\begin{as}\label{df:gr-gr}
$\Gamma$ is a   connected finite bipartite graph with vertex set $V=V_1 \sqcup V_2$ (so   every vertex from $V_1$ is only adjacent to vertices from $V_2$ and vice-versa),
and $\Gamma$ is not a point.
Moreover, the following properties hold:

\begin{enumerate}
\item if $u \in V_1$ then the group $\G_u$ is residually finite;

\item if $v \in V_2$ then most finitary fillings of $\G_v$ with respect to $\{\G_{e_1},\dots,\G_{e_s}\}$ are residually finite,
with   $e_1,\dots,e_s$   the collection of all oriented edges of $\Gamma$ starting at $v$, and   $\G_{e_1},\dots,\G_{e_s}$   the corresponding edge groups;
\item   for every $u \in V_1$,  the group $G_u$ is a proper finitely generated subgroup of $G=\pi_1(\Gamma)$, and $G$ is hyperbolic relative to the family
$\{\G_u \mid u \in V_1\}$; in particular, $G$ is finitely generated.
\end{enumerate}
\end{as}

The main technical tool for our  approach is the theory of Dehn fillings in relatively hyperbolic groups, developed by Osin -- see \cite[Thm.\ 1.1]{Osin-CEP}
(in the torsion-free case this was also done independently  by  Groves and Manning -- see \cite[Thm.\ 7.2]{G-M}):

\begin{thm}\label{thm:periph_fil} Suppose that a group $G$ is hyperbolic relative to a family
of subgroups $\{H_\lambda\}_{\lambda \in \Lambda}$.
Then there exists a finite subset $S \subset G \setminus \{1\}$
with the following property. If  $\{N_\lambda\}_{\lambda \in \Lambda}$ is any collection
of subgroups such that $N_\lambda \lhd H_\lambda$ and $N_\lambda \cap S=\emptyset$ for all $\lambda \in \Lambda$, then: \\
1) for each $\lambda \in \Lambda$ one has  $H_\lambda \cap N=N_\lambda$, where $N:=  \llangle N_\lambda \mid \lambda \in \Lambda \rrangle^G$;
\\ 2) the quotient group $G/N $ is hyperbolic relative to the collection $\{H_\lambda/N_\lambda\}_{\lambda \in \Lambda}$.

Moreover,  for any finite subset $M \subset G$, there exists a finite subset $S(M) \subset G\setminus \{1\} $,
such that the restriction of the natural homomorphism
$G\to G/N$ to $M$ is injective whenever $N_\lambda \cap S(M)=\emptyset$ for all $\lambda \in \Lambda$.
  \qed
\end{thm}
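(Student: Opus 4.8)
This statement is Theorem~1.1 of \cite{Osin-CEP}; in the torsion-free case it is also contained in Theorem~7.2 of \cite{G-M}, so I would not reprove it, and merely indicate how the argument runs, since only its formal conclusions will be used below.

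The plan is to argue with Osin's combinatorial description of relative hyperbolicity. One fixes a finite generating set $X$ of $G$, puts $\mathcal{H}=\bigsqcup_\lambda (H_\lambda\setminus\{1\})$, and presents $G$ as $\langle X\cup\mathcal{H}\mid \mathcal{S}\cup\mathcal{R}\rangle$, where $\mathcal{S}$ is the set of all words over $\mathcal{H}$ equal to $1$ in $G$; relative hyperbolicity then says this relative presentation satisfies a linear relative isoperimetric inequality. The filled group $G/N$ is obtained by adjoining the relators $\mathcal{N}=\bigsqcup_\lambda(N_\lambda\setminus\{1\})$. I would take the obstacle set $S$ to be the (finite) set of all $h\in\bigcup_\lambda H_\lambda\setminus\{1\}$ of $X$-word length at most a constant $\rho=\rho(G,X,\{H_\lambda\})$ coming from the linear relative isoperimetric inequality. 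The hypothesis $N_\lambda\cap S=\emptyset$ then says exactly that every nontrivial element of each $N_\lambda$ is ``long'' in $G$, so that the new relators $\mathcal{N}$ play the role of a small-cancellation family over the relatively hyperbolic group $G$: in any van Kampen diagram over the filled presentation, the cells labelled by elements of $\mathcal{N}$ can be arranged to occupy only a controlled portion of the boundary of any subdiagram.

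The core step is then a Greendlinger-type lemma: if a reduced word $w$ over $X\cup\mathcal{H}$ represents $1$ in $G/N$ but not in $G$, then a minimal van Kampen diagram for $w$ contains an $\mathcal{N}$-cell more than half of whose boundary is shared with $\partial w$. Both assertions follow from this. Applying it to a word over $\mathcal{H}$ of minimal $X$-length representing a putative element of $(H_\lambda\cap N)\setminus N_\lambda$ contradicts the lower bound on the lengths of nontrivial elements of $N_\lambda$, which gives $H_\lambda\cap N=N_\lambda$; feeding the same diagram analysis into the relative isoperimetric estimate shows that the filled presentation still has a linear relative isoperimetric function with respect to $\{H_\lambda/N_\lambda\}$, i.e.\ that $G/N$ is hyperbolic relative to that family. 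For the last clause, given a finite $M\subset G$ one would enlarge $\rho$, hence $S$, so that it dominates $2\max_{m\in M}|m|_X$; then any $m\in M$ mapping to $1$ in $G/N$ would be a ``short'' nontrivial element of $N$, again contradicting the Greendlinger lemma, so $G\to G/N$ is injective on $M$.

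The hard part, were one to carry this out from scratch, is precisely the small-cancellation theory over relatively hyperbolic groups needed to control diagrams containing $\mathcal{N}$-cells, and in particular to show that the linear relative isoperimetric inequality is inherited by $G/N$; this is the technical heart of \cite{Osin-CEP} (and, in the language of the cusped space of \cite{G-M}, of a local-to-global argument for hyperbolicity of the filled cusped space). I would import it as a black box.
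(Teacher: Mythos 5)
The paper itself gives no proof of this theorem — it is stated with a \texttt{\textbackslash qed} and is imported verbatim as Theorem~1.1 of Osin (and Theorem~7.2 of Groves--Manning in the torsion-free case), exactly as you do. Your sketch of Osin's van Kampen diagram and relative isoperimetric argument is a reasonable description of what lies behind the black box, but since both you and the paper treat the result as cited rather than reproved, the approaches coincide.
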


Suppose that  $\Delta$ is any graph of groups with fundamental group $G$, and we are given normal subgroups $N_v\lhd G_v$ for each vertex $v$. Assume furthermore that
$N_v\cap G_e=N_w\cap G_e$ whenever $e=vw$ is an edge of $\Delta$  (as usual, we view $G_e$ as a subgroup of both $G_v$ and $G_w$;
to be precise, we want   the preimages of $N_v$ and $N_w$, under the embeddings of $G_e$ into $G_v$ and $G_w$ respectively, to coincide).

Then we can construct a ``quotient graph of groups'' $\bar\Delta$ as follows: the underlying graph is   the same as in $\Delta$, the vertex group at a vertex $v$ is $G_v/N_v$, the group carried by
$e=vw$ is $G_e/(N_v\cap G_e)$,
and the inclusions are the obvious ones.
The fundamental group of $\bar \Delta$ is isomorphic to $G/ \llangle\cup_v N_v \rrangle^G$.

Now suppose that $\Gamma$ is as in Assumption \ref{df:gr-gr}. For each $v\in V_2$, there is an obstacle set $S_v \subset \G_v\setminus \{1\}$, and we define $S:= \bigcup_{v \in V_2} S_{v}$.

\begin{lem}\label{qrf}
Consider an arbitrary family of  subgroups $\{N_u\}_{u \in V_1}$ such that $N_u \lhd \G_u$, $|\G_u:N_u|<\infty$ and $N_u \cap S=\emptyset$ for every $u \in V_1$.
 The group $\bar G:=G/ \llangle\cup_{u\in V_1} N_u \rrangle^G$ is the fundamental group of a quotient graph of groups $\bar \Gamma$ in which every $u\in V_1$
 carries $G_u/N_u$, and every $v\in V_2$ carries a residually finite group. In particular, $\bar G$ is residually finite.
\end{lem}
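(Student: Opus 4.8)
The plan is to build the quotient graph of groups $\bar\Gamma$ by hand, using the finitary filling hypothesis of Assumption \ref{df:gr-gr}(2) to manufacture the vertex groups sitting over $V_2$ together with their restrictions to the incident edge groups, and then to quote the standard fact that a finite graph of groups with finite edge groups and residually finite vertex groups has residually finite fundamental group.

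First I would record, for every edge $e$ of $\Gamma$ with $V_1$-endpoint $u$, the subgroup $N_e:=N_u\cap G_e$, where $G_e$ is viewed as a subgroup of $G_u$. Since $N_u\lhd G_u$ has finite index, $N_e$ is a finite index normal subgroup of $G_e$, and $N_e\subseteq N_u$ forces $N_e\cap S=\emptyset$; in particular $N_e$ is disjoint from the obstacle subset $S_v\subseteq S$ attached to the $V_2$-endpoint $v$ of $e$. Now fix $v\in V_2$ and let $e_1,\dots,e_s$ be the oriented edges of $\Gamma$ starting at $v$, with edge groups $G_{e_1},\dots,G_{e_s}\leqslant G_v$. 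By the previous observation each $N_{e_i}\lhd G_{e_i}$ is of finite index and avoids $S_v$, so Assumption \ref{df:gr-gr}(2) applies to $G_v$ and the collection $\{G_{e_1},\dots,G_{e_s}\}$: setting $N_v:=\llangle N_{e_1},\dots,N_{e_s}\rrangle^{G_v}$, we obtain $N_v\cap G_{e_i}=N_{e_i}$ for every $i$ and, at the same time, that $G_v/N_v$ is residually finite.

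The key verification is then the compatibility condition from the paragraph preceding the lemma. For an edge $e=uv$ with $u\in V_1$ and $v\in V_2$, the preimage of $N_u$ in $G_e$ under $G_e\hookrightarrow G_u$ is $N_u\cap G_e=N_e$, while the preimage of $N_v$ in $G_e$ under $G_e\hookrightarrow G_v$ is $N_v\cap G_e=N_{e_i}=N_e$ by the filling property (here $e$ is one of the $e_i$'s at $v$). As these coincide, the quotient graph of groups $\bar\Gamma$ is well defined, with vertex group $G_u/N_u$ at $u\in V_1$, vertex group $G_v/N_v$ at $v\in V_2$, edge group $G_e/N_e$ at $e$, and the obvious (injective) edge morphisms; its fundamental group is $G/\llangle\bigcup_{x\in V}N_x\rrangle^G$. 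Since each $N_v$ with $v\in V_2$ is generated as a normal subgroup of $G_v$ by subgroups of the $N_u$'s, we have $N_v\subseteq\llangle\bigcup_{u\in V_1}N_u\rrangle^G$, so that normal closure equals $\llangle\bigcup_{u\in V_1}N_u\rrangle^G$ and $\pi_1(\bar\Gamma)=\bar G$. Finally $\bar\Gamma$ is a finite graph of groups whose edge groups $G_e/N_e$ are finite (being quotients of the finite group $G_u/N_u$) and whose vertex groups are residually finite ($G_u/N_u$ finite for $u\in V_1$, $G_v/N_v$ residually finite for $v\in V_2$), so $\bar G$ is residually finite by the standard fact recalled in Section~2 (cf.\ Remark \ref{rem:gr_gps_w_f_edges-ceg}).

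The main obstacle, and really the only step that is not formal bookkeeping, is the simultaneous control of $N_v$: one needs a single normal subgroup of $G_v$ with prescribed intersection $N_{e_i}$ with each incident edge group \emph{and} residually finite quotient $G_v/N_v$. This is exactly what Assumption \ref{df:gr-gr}(2) provides, and in the applications it comes from Osin's Dehn filling theorem (Theorem \ref{thm:periph_fil}) together with Remark \ref{rem:gr_gps_w_f_edges-ceg}.
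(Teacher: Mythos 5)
Your proof is correct and follows essentially the same route as the paper: intersect each $N_u$ with the incident edge groups, take normal closures $N_v$ at the $V_2$-vertices, use the finitary filling property to control both $N_v\cap G_{e_i}$ and residual finiteness of $G_v/N_v$, and then invoke the standard fact about graphs of residually finite groups with finite edge groups. The only (welcome) difference is that you spell out the compatibility check and the identification $\pi_1(\bar\Gamma)=\bar G$ a little more explicitly than the paper does.
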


\begin{proof}
  For every edge
     $e=uv$ of $\Gamma$, with $u \in V_1$ and $v \in V_2$,
  we define a finite index normal subgroup $L_e \lhd \G_e$ by $L_e:=G_e \cap N_{u}$
(as above, we
view $G_e$ as a subgroup of both $G_u$ and $G_v$).
Now,  for each vertex $v \in V_2$ we let $M_v:= \llangle L_{e_1} \cup \dots \cup L_{e_s} \rrangle^{\G_v} \lhd \G_v$, where $e_1,\dots,e_s$ are the edges of $\Gamma$
starting at $v$. Observe that $L_{e_j} \cap S_v=\emptyset$ for $j=1,\dots,s$ by construction, hence $M_v \cap \G_{e_j}=L_{e_j}$ by Definition \ref{def:fin_fil}.

This shows that $\bar G$ is represented by a quotient graph of groups $\bar\Gamma$.  The group carried by
$u\in V_1$ is $G_u/N_u$, a finite group; in particular, edge groups are finite. The group carried by
 $v\in V_2$ is $\G_v/M_v$, which  is residually finite by Assumption \ref{df:gr-gr}. Thus $\bar G$ is residually finite.
\end{proof}

\begin{rem}\label{qrf+}
Since every $G_u$, with $u \in V_1$, is residually finite, a family of normal subgroups $\{N_u\}_{u \in V_1}$ as in  Lemma \ref{qrf} exists.
 We may even require $N_u\cap S'=\es$ if $S'$ is any given finite subset of $G \setminus \{1\}$.
\end{rem}

\begin{rem}  Lemma \ref{qrf} only requires the  second condition of Assumption \ref{df:gr-gr}.
\end{rem}

\begin{prop}\label{prop:G-rf} Let $G$ be the fundamental group of a graph of groups $\Gamma$ as in   Assumption~\ref{df:gr-gr}.
Then $G$ is residually finite.
\end{prop}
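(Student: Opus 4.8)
The plan is to deduce residual finiteness of $G$ from Lemma \ref{qrf} and the Dehn filling theorem (Theorem \ref{thm:periph_fil}), exploiting the fact that, by Assumption \ref{df:gr-gr}(3), $G$ is hyperbolic relative to the family $\{G_u\mid u\in V_1\}$. So I fix an arbitrary nontrivial element $g\in G$ and aim to construct a finite quotient of $G$ in which $g$ survives.

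First I would assemble all the relevant obstacle sets into one finite subset $\Omega\subset G\setminus\{1\}$: take $\Omega$ to contain the obstacle set $S=\bigcup_{v\in V_2}S_v$ fixed just before Lemma \ref{qrf}, together with the finite set $S(M)$ furnished by the ``moreover'' part of Theorem \ref{thm:periph_fil} for $M=\{1,g\}$. Next, for each $u\in V_1$, I would use that $G_u$ is finitely generated (Assumption \ref{df:gr-gr}(3)) and residually finite (Assumption \ref{df:gr-gr}(1)) to pick a finite index normal subgroup $N_u\lhd G_u$ with $N_u\cap\Omega=\emptyset$; only the finitely many elements of $\Omega\cap G_u$ have to be separated, so a finite intersection of kernels of homomorphisms from $G_u$ to finite groups suffices. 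Set $N:=\llangle N_u\mid u\in V_1\rrangle^G$ and $\bar G:=G/N$.

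Now the two halves of the argument run in parallel. Since $N_u\cap S(\{1,g\})=\emptyset$ for every $u\in V_1$, Theorem \ref{thm:periph_fil}, applied with $\{H_\lambda\}=\{G_u\mid u\in V_1\}$, gives that the natural map $G\to\bar G$ is injective on $\{1,g\}$; hence the image $\bar g$ of $g$ in $\bar G$ is nontrivial. Since also $N_u\cap S=\emptyset$ for every $u\in V_1$, Lemma \ref{qrf} applies and shows that $\bar G$ is residually finite. Therefore there is a homomorphism from $\bar G$ onto a finite group that does not kill $\bar g$, and precomposing it with $G\twoheadrightarrow\bar G$ yields the desired finite quotient of $G$ separating $g$ from $1$. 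As $g$ was arbitrary, $G$ is residually finite.

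The only genuinely nontrivial ingredient is Theorem \ref{thm:periph_fil}: an arbitrary $g\in G$ need not be conjugate into any vertex group of $\Gamma$, so it is not at all clear a priori that $g\notin N=\llangle N_u\rrangle^G$, and it is precisely the Dehn filling machinery that controls this. Everything else is routine bookkeeping; the one small point to check, namely the existence of the subgroups $N_u$ avoiding a prescribed finite set, is immediate from finite generation and residual finiteness of the $G_u$.
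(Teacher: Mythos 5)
Your proof is correct and follows essentially the same route as the paper: fix $g\ne 1$, choose the $N_u$ small enough to avoid both the obstacle set of Lemma \ref{qrf} and the set $S(\{1,g\})$ from Theorem \ref{thm:periph_fil}, then combine residual finiteness of the filling $\bar G$ with injectivity of $G\to\bar G$ on $\{1,g\}$. The only cosmetic difference is that you bundle the obstacle sets into a single set $\Omega$ up front, whereas the paper introduces them one at a time.
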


\begin{proof} Take any element $x \in G \setminus \{1\}$ and let $M:=\{1,x\} \subset G$.
Since each $\G_u$, $u \in V_1$, is residually finite, one can find sufficiently small finite index subgroups $N_u \lhd \G_u$ as in Lemma \ref{qrf}. We can also assume that each
$N_u$ is disjoint from the set $S(M)$ provided by Theorem \ref{thm:periph_fil}  (applied to $G$ relative to the $G_u$'s). If $N:=\llangle N_u \mid u \in V_1 \rrangle^G$, then $\bar G=G/N$ is residually finite by
Lemma \ref{qrf}, and the image of $x$ under the map $\kappa:G\to \bar G$  is non-trivial by the final claim of Theorem \ref{thm:periph_fil}. Composing $\kappa$
with a map from $\bar G$ to a finite group that does not kill $\kappa(x)$,
we get a finite quotient of $G$ in which the image of $x$ is non-trivial. This shows residual finiteness of $G$.
\end{proof}

  The above proposition is true even without the hypothesis that vertex groups from $V_1$ are finitely generated.

\subsection{Using Grossman's method}
 Let $G$ be
hyperbolic relative to  a family of proper finitely generated
 subgroups $\calp=\{P_1,\dots,P_k\}$.

Recall that an element $g \in G$ is called \textit{loxodromic}
if $g$ has infinite order and is not conjugate to an element of   $P_i$ for any $i$. Two elements $g,h\in G$ are said to be
\textit{commensurable}
in $G$ if there are $f \in G$ and $m,n \in \Z\setminus \{0\}$ such that $h^n=fg^mf^{-1}$
(we use the terminology of \cite{M-O-norm_aut}, where conjugate elements are considered commensurable).  An automorphism $\alpha \in {\rm Aut}(G)$ is called \textit{commensurating}
if $\alpha(g)$ is commensurable with $g$ for every $g \in G$.

It is known that
$G$ contains a unique maximal finite normal subgroup denoted $E(G)$ (see \cite[Cor.\  2.6]{M-O-norm_aut}); this subgroup contains the center of $G$
if $G$ is not virtually cyclic.

\begin{lem}  \label{lem:comm_aut_def}
Assume that $G$ is not virtually cyclic.
\begin{itemize}
\item[(1)]
If $E(G)=\{ 1\}$, then every commensurating automorphism of $G$ is inner.
\item[(2)] Suppose that $\alpha \in {\rm Aut}(G)$ is not a commensurating automorphism. Then
there exists a loxodromic element $g \in G$ such that $\alpha(g)$ is also loxodromic and $\alpha(g)$ is not commensurable with $g$ in $G$.
\end{itemize}
\end{lem}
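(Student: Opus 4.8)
The plan is to prove the two parts more or less simultaneously, since part (1) is essentially a special case of the contrapositive of part (2): if $E(G)=\{1\}$ and $\alpha$ is commensurating, I want to conclude $\alpha$ is inner, and the natural route is to use the machinery behind part (2) (or rather its absence of conclusion) together with a known rigidity statement about automorphisms of relatively hyperbolic groups that act ``by commensuration'' on loxodromic elements. So I would first recall from \cite{M-O-norm_aut} the relevant structural facts: $G$ acts acylindrically (in the appropriate relative sense) on its coned-off/relative Cayley graph, loxodromic elements are exactly the ones acting hyperbolically, and each loxodromic $g$ lies in a unique maximal elementary (virtually cyclic) subgroup $\hat{\langle g\rangle}=E_G(g)$, its ``commensurator''. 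The key point to extract is that $\alpha$ commensurating is equivalent to $\alpha$ preserving, up to conjugacy, the collection of these maximal loxodromic elementary subgroups together with their $\pm$-orientations in a coherent way.

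For part (2), I would argue by contrapositive: suppose that for \emph{every} loxodromic $g$ with $\alpha(g)$ loxodromic, $\alpha(g)$ \emph{is} commensurable with $g$; I must then show $\alpha$ is commensurating, i.e. that $\alpha(g)$ is commensurable with $g$ for all $g\in G$, including non-loxodromic $g$. First I would dispose of the case where $\alpha(g)$ fails to be loxodromic for some loxodromic $g$: here $\alpha(g)$ is either finite-order or parabolic, and I would derive a contradiction using that $\alpha$ is an automorphism (so it must send the ``generic'' behaviour — a non-elementary free subgroup generated by two independent loxodromics, which exists since $G$ is not virtually cyclic — to a non-elementary subgroup, forcing at least one image to be loxodromic, and then a ping-pong / commensurability argument shows all of them must be; one can also invoke that the set of loxodromic elements is ``large'' and $\alpha$-equivariant modulo the exceptional set). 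Once it is known that $\alpha$ maps loxodromics to loxodromics, the hypothesis says $\alpha$ commensurates every loxodromic element. It remains to handle $g$ of finite order (trivially commensurable with anything only if it has infinite order — but here one wants $\alpha(g)$ commensurable with $g$, and for torsion elements ``commensurable'' in the sense used, via $h^n=fg^mf^{-1}$, needs care; I would check that finite-order elements are automatically fine, or that they do not affect the definition since the relevant notion of commensurating only constrains infinite-order behaviour as used downstream) and $g$ parabolic. For parabolic $g\in P_i$, I would use that $\alpha$ permutes the parabolic subgroups up to conjugacy only if we are in $\Aut(G;\calp)$, which we are \emph{not} assuming; so instead the argument must be: a parabolic $g$ lies in infinitely many loxodromic subgroups? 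No — rather, I would write $g$ as a product of loxodromics or use that $P_i$ has infinite index and the Cayley graph geometry forces $\alpha(P_i)$ to again have ``bounded'' orbits, hence be parabolic or finite, and then commensurability of $\alpha$ on loxodromics pins down $\alpha(g)$ up to the required equivalence. This last point — controlling $\alpha$ on the parabolic and torsion elements from control on loxodromics — is the step I expect to be the main obstacle, and I would lean on \cite[Cor.\ 2.6]{M-O-norm_aut} and the classification of elements via their fixed-point sets on the boundary.

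For part (1), assuming $E(G)=\{1\}$ and $\alpha$ commensurating: by part (2) (its statement, not contrapositive) $\alpha$ preserves commensurability classes, so in particular $\alpha$ fixes, up to conjugacy, every maximal elementary subgroup $E_G(g)$ of every loxodromic $g$. Since $E(G)=\{1\}$, each such $E_G(g)$ has trivial finite radical, so $E_G(g)$ is virtually cyclic with infinite cyclic subgroup of finite index and, having no finite normal subgroup interfering, the commensuration action pins $\alpha$ down on $E_G(g)$ up to inner and up to sign. The standard Grossman-type argument (cf.\ \cite{Grossman, M-O-norm_aut}) then shows: composing $\alpha$ with a suitable inner automorphism, we may assume $\alpha$ fixes a specific loxodromic $g_0$ (or at least $E_G(g_0)$ pointwise up to the allowed sign); then for a second loxodromic $g_1$ independent from $g_0$, simultaneous normalization forces $\alpha$ to act trivially, because an automorphism fixing two ``independent'' maximal loxodromic subgroups and commensurating everything must be inner when there is no finite normal subgroup to absorb the discrepancy. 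Concretely I would: pick $g_0$ loxodromic, replace $\alpha$ by $\tau_h\circ\alpha$ so that $\alpha(g_0)=g_0$; then show $\alpha$ fixes $Z_G(g_0)$ up to sign and, using commensuration on enough other elements and $E(G)=\{1\}$, that $\alpha=\mathrm{id}$; hence the original $\alpha$ was $\tau_{h^{-1}}$, i.e.\ inner. The delicate ingredient throughout is the non-virtually-cyclic hypothesis, which guarantees an abundance of independent loxodromic elements and non-elementary subgroups to run ping-pong, and the $E(G)=\{1\}$ hypothesis, which removes the only obstruction (a central/normal finite piece) to rigidity. I expect the bookkeeping around orientations ($\pm$ on each cyclic subgroup) and around torsion in the $E_G(g)$'s to be the fiddliest part, but conceptually it is the standard conjugacy-separability-flavoured rigidity argument adapted to the relatively hyperbolic setting.
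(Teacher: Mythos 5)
The paper's proof of this lemma is essentially a pointer: part (1) is exactly \cite[Cor.\ 1.4]{M-O-norm_aut}, and part (2) is obtained by chaining \cite[Cor.\ 5.3]{M-O-norm_aut} (which already gives a loxodromic $g_0$ with $\alpha(g_0)$ not commensurable with $g_0$) with \cite[Lemma 4.8]{M-O-norm_aut} (which lets one replace $g_0$ by a loxodromic $g$ for which $\alpha(g)$ is \emph{also} loxodromic). You instead set out to re-derive these facts from scratch, and your sketch does not actually close the holes it opens. In part (2), you argue by contrapositive and explicitly flag -- but do not resolve -- the problem of promoting ``$\alpha$ commensurates loxodromics'' to ``$\alpha$ commensurates parabolic elements of infinite order''; this is genuinely delicate and is precisely the content of \cite[Cor.\ 5.3]{M-O-norm_aut}, whose proof is several pages of [M-O-norm\_aut]. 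Similarly, your ping-pong/``large set'' argument for why $\alpha$ must send loxodromics to loxodromics is exactly \cite[Lemma 4.8]{M-O-norm_aut}; you gesture at it but do not supply it. In part (1), the step you defer (``an automorphism that commensurates everything and fixes a chosen loxodromic $g_0$ is the identity when $E(G)=\{1\}$'') is where all the work lives, and the ``orientation bookkeeping'' and handling of the finite pieces $E_G(g)$ is what \cite[Cor.\ 1.4]{M-O-norm_aut} is for. So as written the proposal is a plan with acknowledged gaps at exactly the points the cited lemmas were designed to handle, and it also diverges from the paper's route, which is simply to cite. If the intent is to use the lemma as a black box (as the paper does), the correct move is to cite \cite[Cor.\ 1.4, Cor.\ 5.3 and Lemma 4.8]{M-O-norm_aut} rather than attempt a self-contained re-derivation.
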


\begin{proof} The statement (1) is proved in \cite[Cor.\ 1.4]{M-O-norm_aut}.

Suppose that $\alpha \in {\rm Aut}(G)$ is not commensurating. By \cite[Cor.\  5.3]{M-O-norm_aut} there exists
a loxodromic element $g_0 \in G$ such that $\alpha(g_0)$ is not commensurable with $g_0$ in $G$. The statement (2) now follows after applying \cite[Lemma 4.8]{M-O-norm_aut}.
\end{proof}

\begin{lem}{\rm (\cite[Lemma 7.1]{M-O-norm_aut})} \label{lem:non-comm}
Assume that $G$ is
hyperbolic relative to
$ \{P_1,\dots,P_k\}$, with $k\ge1$,
and $g, h \in G$ are two
non-commensurable loxodromic elements. Then $g$ and $h$ are loxodromic and
non-commensurable in most finitary fillings of $G$ with respect to
$ \{P_1,\dots,P_k\}$.
\qed
\end{lem}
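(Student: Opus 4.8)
The plan is to reduce the claim to the already-established machinery on finitary fillings preserving loxodromy and non-commensurability. First I would recall from \cite{M-O-norm_aut} the precise statement we can invoke: for any finite collection of loxodromic elements, there is a finite obstacle subset $S\subset G\setminus\{1\}$ such that, as long as the normal subgroups $N_i\lhd P_i$ used for the filling avoid $S$, loxodromic elements stay loxodromic and non-commensurable elements stay non-commensurable in the quotient $G/N$ with $N=\llangle N_1,\dots,N_k\rrangle^G$. Applying this to the two elements $g$ and $h$ (and, to be safe, to the finite family consisting of $g$, $h$, and whatever auxiliary loxodromics the cited lemmas require), I obtain a finite subset $S_0$. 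Combining $S_0$ with the obstacle subset provided by Theorem \ref{thm:periph_fil} applied to $G$ relative to $\{P_1,\dots,P_k\}$ gives a single finite $S\subset G\setminus\{1\}$ that simultaneously guarantees the Dehn filling behaves well (item (1) of Theorem \ref{thm:periph_fil}, so $P_i\cap N=N_i$, and the quotient is again relatively hyperbolic) and preserves loxodromy and non-commensurability of $g$ and $h$.

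Next I would verify that this $S$ witnesses the ``most finitary fillings'' conclusion in the sense of Definition \ref{def:fin_fil}: given any family of finite index normal subgroups $N_i\lhd P_i$ with $N_i\cap S=\emptyset$, the conditions $P_i\cap N=N_i$ hold by Theorem \ref{thm:periph_fil}, and in the quotient $\bar G=G/N$ the images $\bar g$ and $\bar h$ are loxodromic (with respect to the induced relatively hyperbolic structure on $\bar G$ over $\{P_i/N_i\}$) and non-commensurable, by the results of \cite{M-O-norm_aut} incorporated into $S_0$. One should also check the mild point that ``loxodromic in $\bar G$'' refers to the peripheral structure $\{P_i/N_i\}_{i}$, which is exactly the one produced by Theorem \ref{thm:periph_fil}; this keeps the statement internally consistent.

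Since the lemma is quoted verbatim from \cite[Lemma 7.1]{M-O-norm_aut}, the honest proof is simply a pointer to that reference, and indeed the excerpt marks the statement with \qed. If instead one wants a self-contained argument, the only genuine content is the two facts from \cite{M-O-norm_aut} — that fillings avoiding a suitable finite set preserve loxodromy, and that they preserve non-commensurability of loxodromic pairs — together with the bookkeeping that merges the finitely many obstacle sets into one. The main obstacle, such as it is, is purely one of citation hygiene: making sure the finite sets extracted from the (several) lemmas of \cite{M-O-norm_aut} and from Theorem \ref{thm:periph_fil} are each genuinely finite and that their union still does the job for \emph{all} admissible choices of $N_i$; there is no new geometry to be done here.
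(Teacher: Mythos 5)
Your proposal is correct and matches what the paper actually does: the lemma is stated with a \qed as a direct citation of \cite[Lemma 7.1]{M-O-norm_aut}, so no proof is given in the paper, and your sketch of merging the finite obstacle sets from Theorem \ref{thm:periph_fil} and the preservation-of-loxodromy/non-commensurability statements in \cite{M-O-norm_aut} is exactly the bookkeeping that underlies the cited result.
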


  Recall that $G$ is \emph{conjugacy separable} if, given any non-conjugate elements $g,h \in G$, there exists a homomorphism $\varphi$ from $G$ to a
finite group such that $\varphi(g)$ and $\varphi(h)$ are not conjugate. Note that this is evidently stronger than residual finiteness of $G$.

\begin{prop}\label{prop:Out-rf}
Let $G$ be the fundamental group of a graph of groups $\Gamma$ as in   Assumption  \ref{df:gr-gr}.
  Suppose that $E(G)=\{1\}$, and
for each $v \in V_2$ most finitary fillings of
$\G_v$ with respect to $\{\G_{e_1},\dots,\G_{e_s}\}$
  (where $e_1,\dots,e_s$ is the list of edges of $\Gamma$ starting at $v$)
are \emph{conjugacy separable}. Then ${\rm Out}(G)$ is residually finite.
\end{prop}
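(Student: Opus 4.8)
The plan is to combine Osin's Dehn filling machinery with Grossman's method: although $G$ itself need not be conjugacy separable, a suitable finitary filling $\bar G$ of $G$ will be, and one can separate a ``bad'' pair of elements in a finite quotient of $\bar G$ and then push the information back up to $G$ via a characteristic subgroup.

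If $G$ is virtually cyclic then $\Out(G)$ is finite and there is nothing to prove, so assume it is not. Fix a non-trivial $\Phi=\hat\alpha\in\Out(G)$ with $\alpha\in\Aut(G)$. Since $E(G)=\{1\}$, Lemma \ref{lem:comm_aut_def}(1) shows that every commensurating automorphism of $G$ is inner; as $\alpha$ is not inner, it is not commensurating, and Lemma \ref{lem:comm_aut_def}(2) produces a loxodromic element $g\in G$ such that $\alpha(g)$ is loxodromic and is not commensurable with $g$ in $G$.

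Recall from Assumption \ref{df:gr-gr}(3) that $G$ is hyperbolic relative to $\calp=\{G_u\mid u\in V_1\}$, and that $V_1\ne\es$ because $\Gamma$ is a connected bipartite graph which is not a point; so $\calp$ is non-empty. Choose a finite set $S\subset G\setminus\{1\}$ large enough to serve simultaneously as the obstacle set of Lemma \ref{qrf}, as an obstacle set ensuring that each $v\in V_2$ yields a \emph{conjugacy separable} finitary filling of $G_v$ (such a set exists by hypothesis and Definition \ref{def:fin_fil}), and as the obstacle set of Lemma \ref{lem:non-comm} applied to the non-commensurable loxodromic pair $g,\alpha(g)$. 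Using residual finiteness of the groups $G_u$, $u\in V_1$ (Remark \ref{qrf+}), pick finite-index normal subgroups $N_u\lhd G_u$ with $N_u\cap S=\es$, put $N:=\llangle\cup_{u\in V_1}N_u\rrangle^G$ and $\kappa\colon G\to\bar G:=G/N$. By the argument of Lemma \ref{qrf}, $\bar G$ is the fundamental group of a finite graph of groups $\bar\Gamma$ with finite edge groups and conjugacy separable vertex groups, so $\bar G$ is residually finite and, moreover, \emph{conjugacy separable} — conjugacy separability being inherited by the fundamental group of a finite graph of groups with finite edge groups from its vertex groups. Finally, by Lemma \ref{lem:non-comm}, $\kappa(g)$ and $\kappa(\alpha(g))$ are loxodromic and non-commensurable, in particular non-conjugate, in $\bar G$.

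Now apply Grossman's characteristic-subgroup trick. By conjugacy separability of $\bar G$ there is a homomorphism $\psi$ from $\bar G$ onto a finite group $Q$ with $\psi\kappa(g)$ and $\psi\kappa(\alpha(g))$ non-conjugate in $Q$; set $q:=\psi\circ\kappa\colon G\to Q$. Since $G$ is finitely generated, $K:=\bigcap_{\beta\in\Aut(G)}\beta(\ker q)$ is a characteristic subgroup of finite index in $G$, so $\Aut(G)$ acts on the finite group $G/K$, inducing a homomorphism $\mu\colon\Out(G)\to\Out(G/K)$. If $\mu(\Phi)$ were trivial, the automorphism of $G/K$ induced by $\alpha$ would be inner, say equal to conjugation by the class of some $a\in G$; reducing modulo $\ker q\supseteq K$ would give $q(\alpha(g))=q(a)q(g)q(a)\m$, contradicting the choice of $q$. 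Hence $\mu(\Phi)\ne1$ in the finite group $\Out(G/K)$, and since $\Phi$ was an arbitrary non-trivial element, $\Out(G)$ is residually finite.

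The step I expect to be the real obstacle is showing that the filling $\bar G$ is conjugacy separable. This is exactly where one needs the hypothesis that most finitary fillings of the $G_v$ are conjugacy separable, rather than merely residually finite as in Proposition \ref{prop:G-rf}; and it presumably requires a separate lemma asserting that conjugacy separability is preserved under taking fundamental groups of finite graphs of groups with finite edge groups (with the attendant control on conjugacy of the finite subgroups of the vertex groups). Everything else — the choice of the combined obstacle set, the invocation of Theorem \ref{thm:periph_fil} through Lemmas \ref{qrf} and \ref{lem:non-comm}, and the characteristic-subgroup argument — is routine.
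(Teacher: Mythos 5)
Your proposal is correct and follows essentially the same route as the paper: reduce to a non-commensurable loxodromic pair via Lemma \ref{lem:comm_aut_def}, pass to a finitary filling $\bar G$ with conjugacy separable vertex groups and finite edge groups using a suitably enlarged obstacle set (combining Lemma \ref{qrf}, the conjugacy-separability hypothesis, and Lemma \ref{lem:non-comm}), and conclude via Grossman's characteristic-subgroup argument (which you unpack inline; the paper packages it as Lemma \ref{lem:cs->rf_out}). The ``separate lemma'' you correctly anticipate --- that fundamental groups of finite graphs of groups with conjugacy separable vertex groups and finite edge groups are conjugacy separable --- is exactly Dyer's theorem \cite{Dyer-2}, which is what the paper invokes; no extra control on conjugacy of finite subgroups is needed since the edge groups themselves are finite.
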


The proof uses Grossman's method, which is based  on the following fact:

\begin{lem}\label{lem:cs->rf_out} Given a finitely generated group $G$ and $\alpha\in\Aut(G)$,
suppose that there is a homomorphism $\psi:G \to K$ with $K$ finite, and $g\in G$, such that $\psi(g)$ is not conjugate to $\psi(\alpha(g))$ in $K$. Then there is a
homomorphism $\hat\theta: {\rm Out}(G) \to L$ with $L$ finite such that
$\hat\theta(\hat\alpha) \neq 1$ in $L$, where $\hat \alpha$ is the image of $\alpha$ in $\Out(G)$.
\end{lem}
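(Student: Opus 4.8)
The plan is to prove Lemma~\ref{lem:cs->rf_out} by Grossman's classical argument, adapting it to the given hypothesis. Start with a finitely generated group $G$, an automorphism $\alpha\in\Aut(G)$, a finite group $K$, a homomorphism $\psi:G\to K$ and an element $g\in G$ with $\psi(g)$ not conjugate to $\psi(\alpha(g))$ in $K$. The idea is to replace $K$ by a finite \emph{characteristic} quotient of $G$: since $G$ is finitely generated, $\ker\psi$ contains a finite-index subgroup $N\lhd G$ which is characteristic in $G$ (for instance $N=\bigcap_{\beta\in\Aut(G)}\beta(\ker\psi)$, a finite intersection of finite-index subgroups), as recalled in Section~2 of the paper. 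Let $L=G/N$ and let $\bar\psi:G\to L$ be the quotient map; since $N\leqslant\ker\psi$, $\bar\psi(g)$ and $\bar\psi(\alpha(g))$ are still non-conjugate in $L$ (a conjugacy in $L$ would push forward to one in $K$).

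Next, because $N$ is characteristic, every automorphism of $G$ descends to an automorphism of $L$, giving a homomorphism $\Aut(G)\to\Aut(L)$; moreover inner automorphisms of $G$ go to inner automorphisms of $L$, so we get an induced homomorphism $\Out(G)\to\Out(L)$. Since $L$ is finite, $\Out(L)$ is finite. It remains to see that the image of $\hat\alpha$ in $\Out(L)$ is nontrivial, i.e.\ that the automorphism $\bar\alpha\in\Aut(L)$ induced by $\alpha$ is not inner. If $\bar\alpha$ were conjugation by some $\ell\in L$, then for every $x\in G$ we would have $\bar\psi(\alpha(x))=\ell\,\bar\psi(x)\,\ell^{-1}$; taking $x=g$ this says exactly that $\bar\psi(\alpha(g))$ is conjugate to $\bar\psi(g)$ in $L$, contradicting the choice of $N$ and $\psi$. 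Hence $\bar\alpha$ is not inner, so its class in $\Out(L)$ is nontrivial. Setting $\hat\theta:\Out(G)\to\Out(L)=:L'$ (one may rename $\Out(L)$ as the finite group $L$ in the statement) finishes the argument.

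The only real content is the passage from the arbitrary finite quotient $K$ to a characteristic one, which is where finite generation of $G$ is used; everything else is formal. The potential pitfall, and the step to state carefully, is that one must take $N$ characteristic (not merely normal) in $G$ so that the whole of $\Aut(G)$ — and not just $\Inn(G)$ — descends to $\Aut(L)$; this is exactly what makes the map $\Out(G)\to\Out(L)$ well defined. I do not anticipate any genuine obstacle: the lemma is essentially Grossman's observation that conjugacy distinguishability of $g$ from $\alpha(g)$ in finite quotients detects nontriviality of $\hat\alpha$ in $\Out(G)$, and in the applications it is combined with Lemma~\ref{lem:comm_aut_def}, Lemma~\ref{lem:non-comm}, and the conjugacy separability of the finitary fillings of the $V_2$-vertex groups to produce such a $\psi$, $g$ for any non-inner (hence, after Lemma~\ref{lem:comm_aut_def}, non-commensurating) automorphism $\alpha$.
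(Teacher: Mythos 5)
Your proposal is correct and follows essentially the same route as the paper's own proof: shrink $\ker\psi$ to a finite-index characteristic subgroup $N$, pass to $L=G/N$, note that non-conjugacy of $\psi(g)$ and $\psi(\alpha(g))$ persists, and observe that $\hat\alpha$ therefore maps to a non-inner class in the finite group $\Out(L)$. No differences worth noting.
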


\begin{proof}
Since $G$ is finitely generated, there exists a characteristic finite index subgroup
$N \lhd G$ such that $N \le \ker \psi$. Let $\varphi: G \to G/N$ be the canonical epimorphism. Then  $\psi$ factors through
$\varphi$, hence $\varphi(g)$ is not conjugate to $\varphi(\alpha(g))$ in $G/N$. Observe that, as $N$ is characteristic in $G$, there are induced homomorphisms
$\theta: {\rm Aut}(G) \to {\rm Aut}(G/N)$ and $\hat\theta:{\rm Out}(G) \to L:={\rm Out}(G/N)$. Since $\varphi(g)$ is not conjugate to $\varphi(\alpha(g))$ in $G/N$, the automorphism $\theta(\alpha)$
is not inner and $\hat\theta(\hat\alpha)\neq 1$.
\end{proof}

\begin{proof}[Proof of Proposition \ref{prop:Out-rf}]

We may assume that   $G$ is not virtually cyclic, since   ${\Out}(G)$ is finite if it is   (see \cite[Lemma~6.6]{M-O-norm_aut}).
Applying Lemma \ref{qrf} and Remark~\ref{qrf+}, we find a quotient graph of groups $\bar\Gamma$ where vertices in $V_1$ carry finite groups and vertices in $V_2$ carry residually finite groups.
We write $\bar G=\pi_1(\bar \Gamma)$, and we let $\kappa:G\to \bar G$ be the projection.

We shall now enlarge  $S$ to ensure that $\bar \Gamma$ possesses additional properties.
First,  we may assume that vertices in $V_2$ carry a conjugacy separable group. By \cite{Dyer-2},
$\bar G $ is then conjugacy separable, as  the fundamental group of a finite graph of groups with conjugacy separable vertex groups and finite edge groups.

Now consider any  $\hat\alpha \in {\rm Out}(G)\setminus \{1\}$, represented by
$\alpha \in {\Aut}(G) \setminus \Inn(G)$.
Since $E(G)=\{1\}$, by Lemma \ref{lem:comm_aut_def} there exists a loxodromic element $g \in G$ such that $\alpha(g)$ is a loxodromic element not commensurable with $g$ in $G$.
By Lemma \ref{lem:non-comm} (applied to $G$ relative to the $G_u$'s), we   can enlarge the obstacle set $S$ to assume that the elements $\kappa(g)$ and $\kappa(\alpha(g))$  are non-commensurable in $\bar {G}$.

Since $\bar G$ is conjugacy separable, there exists a finite group $K$ and a homomorphism $\eta:\bar{G} \to K$
such that $\eta(\kappa(g))$ is not conjugate to $\eta(\kappa(\alpha(g)))$ in $K$. Thus, setting $\psi:=\eta \circ \kappa:G \to K$,
we can apply Lemma \ref{lem:cs->rf_out}
 to find a finite quotient of ${\rm Out}(G)$ separating
$\hat\alpha$ from the identity.
\end{proof}

\subsection{Quadratically hanging groups} \label{qhg}

In the next subsection, we will apply Propositions \ref{prop:G-rf} and  \ref{prop:Out-rf} to the canonical JSJ decomposition. In order to do this, we need to study finitary fillings of QH vertex groups.
We denote such a group by $O$.

Recall from Subsection \ref{JSJ}
that $O$ is an extension
$\{1\} \to F \to O \stackrel{\xi}{\to} P \to \{1\},$
where $F$ is a finite group and $P$ is the fundamental group of a
hyperbolic 2-orbifold $\mathcal{O}$ with boundary.
Consider full preimages  $C_i=\xi\m(B_i)$ of a set of representatives   $B_1,\dots,B_s$ of fundamental groups of components of the boundary of $\mathcal{O}$.

The goal of this subsection is the following statement:

\begin{prop} \label{lem:f-by-o_fil-cs}
 Most finitary fillings of $O$ with respect to $\calh=\{C_1,\dots,C_s\}$ are conjugacy separable.
\end{prop}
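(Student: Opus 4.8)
The plan is to realise every finitary filling of $O$ with respect to $\calh$ as a finite extension of the fundamental group of a closed hyperbolic $2$-orbifold, and then to invoke the known conjugacy separability of virtually surface groups. Write $\xi\colon O\to P$ for the quotient onto the $2$-orbifold group $P=\pi_1(\mathcal{O})$, with finite kernel $F$, so that $C_i=\xi^{-1}(B_i)$. Since $P$ is virtually free, so is its finite extension $O$; in particular $O$ is hyperbolic, and as the $C_i$ are virtually cyclic and almost malnormal in $O$ (being preimages of the almost malnormal boundary subgroups $B_i\leqslant P$), the group $O$ is hyperbolic relative to $\calh$. Applying Theorem~\ref{thm:periph_fil} to this structure produces a finite set $S_1\subset O\setminus\{1\}$ such that, for any finite-index normal subgroups $N_i\lhd C_i$ with $N_i\cap S_1=\es$ and $N:=\llangle N_i\mid i\rrangle^O$, one has $C_i\cap N=N_i$ for all $i$ --- the first requirement of Definition~\ref{def:fin_fil}. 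It remains to enlarge $S_1$ to a finite obstacle set $S$ that also forces $O/N$ to be conjugacy separable.

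\emph{Structure of a filling.} The group $P$ is hyperbolic relative to $\{B_1,\dots,B_s\}$, so Theorem~\ref{thm:periph_fil} applied to $P$ yields a finite set $S_P\subset P\setminus\{1\}$. Fix an integer $m_0\geqslant 2$, depending only on $\mathcal{O}$, large enough that capping every boundary component of $\mathcal{O}$ by a disc carrying a cone point (or corner reflector) of order at least $m_0$ gives a closed $2$-orbifold of negative orbifold Euler characteristic (possible since that characteristic exceeds $\chi^{\mathrm{orb}}(\mathcal{O})$ by at most $s/m_0$ and $\chi^{\mathrm{orb}}(\mathcal{O})<0$ is fixed). Choose a finite set $D\subset\bigcup_i B_i$ such that a normal subgroup $\bar N_i\lhd B_i$ disjoint from $D$ automatically has index at least $m_0$ in $B_i$ (such $D$ exists because each $B_i$ is virtually cyclic). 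Since $\xi$ has finite fibres, the sets $F\setminus\{1\}$, $\xi^{-1}(S_P)$ and $\xi^{-1}(D)$ are finite; let $S$ be their union with $S_1$. For an admissible family ($N_i\cap S=\es$) put $\bar N_i:=\xi(N_i)\lhd B_i$ and $\bar N:=\xi(N)=\llangle\bar N_i\mid i\rrangle^P$. Then $\bar N_i\cap S_P=\es$ and $[B_i:\bar N_i]\geqslant m_0$, so Theorem~\ref{thm:periph_fil} (together with the van Kampen description of surgery on $2$-orbifolds) gives $B_i\cap\bar N=\bar N_i$ and identifies $P/\bar N$ with the fundamental group of the closed $2$-orbifold obtained by capping each boundary component of $\mathcal{O}$; by the choice of $m_0$ this orbifold has $\chi^{\mathrm{orb}}<0$, hence is hyperbolic \cite{Th}. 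Setting $\hat N:=\xi^{-1}(\bar N)$, one checks $\hat N=NF$, so $\hat N/N\cong F/(F\cap N)$ is finite and there is a short exact sequence
\[
\{1\}\to \hat N/N\to O/N\to P/\bar N\to\{1\}.
\]
Thus $O/N$ is a finite extension of the fundamental group of a closed hyperbolic $2$-orbifold.

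\emph{Conjugacy separability.} A finite extension of a closed hyperbolic $2$-orbifold group is virtually a closed surface group: the orbifold group has a finite-index closed-surface subgroup $\Sigma$, and the induced extension of $\Sigma$ by a finite group splits over a deep enough finite-index subgroup of $\Sigma$ --- by the standard transfer argument for surface groups, after reducing to a central extension --- producing a finite-index closed-surface subgroup of $O/N$. Hence, by Martino's theorem that every virtually surface group is conjugacy separable, $O/N$ is conjugacy separable. This exhibits the obstacle set $S$ required by Definition~\ref{def:fin_fil} and proves the proposition. (Had one not insisted on a large $m_0$, the capped orbifold might instead be Euclidean, making $O/N$ virtually polycyclic, which is conjugacy separable by classical results of Formanek and Remeslennikov; so the argument in fact goes through with $m_0=2$.)

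\emph{Main obstacle.} The delicate part is the middle step: packaging into a single finite subset $S\subset O\setminus\{1\}$ the conditions that validate the Dehn fillings of both $O$ and $P$ and force every capping order to be at least $m_0$. This succeeds because the constraints ``$\bar N_i$ avoids $S_P$'' and ``$[B_i:\bar N_i]$ is large'' pull back, through the finite-fibred homomorphism $\xi$, to constraints of the form ``$N_i\cap S=\es$'' on the $N_i\lhd C_i$. One must additionally verify the identities $\hat N=NF$ and $O/\hat N\cong P/\bar N$ and the identification of $P/\bar N$ as a genuine capped-orbifold group; torsion in $F$ and the possibility of infinite dihedral boundary subgroups $B_i$ make these checks technical, but they use no idea beyond Dehn filling and the cited conjugacy-separability results.
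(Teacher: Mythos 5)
Your proposal is correct and follows essentially the same route as the paper: realise each filling $O/N$ as a finite extension of the fundamental group of a closed hyperbolic (or, as you note, at worst Euclidean) $2$-orbifold, then invoke Martino's conjugacy-separability theorem for virtually surface groups. The paper organises the argument slightly differently: it treats the case $F=\{1\}$ first by a direct surgery description of $P/\llangle K_i\rrangle$ as a closed orbifold group, using an explicit obstacle set $\{c,\dots,c^r\}$ (resp.\ $\{a,b,ab,\dots,(ab)^r\}$) inside each boundary subgroup, and only invokes Theorem \ref{thm:periph_fil} once, for $O$ relative to $\calh$, in the general case. You instead apply Theorem \ref{thm:periph_fil} to both $O$ and $P$; this is harmless and even makes the property ``$\bar N\cap B_i=\bar N_i$'' come for free, at the cost of an extra appeal to the black box. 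Two small points worth tightening. First, you need $m_0\ge 3$, not just ``large'': in the infinite dihedral case a normal subgroup of index $\ge 3$ is automatically contained in the cyclic index-$2$ subgroup, hence is cyclic, which is what makes the capped object a genuine closed orbifold; with $m_0=2$ you could in principle pick up a reflection and the surgery description breaks. Second, the ``standard transfer argument'' you sketch is exactly the content of Martino's Theorem~4.3, which the paper cites directly; your sketch (reduce to a central extension with finite abelian kernel, kill the class in $H^2(\Sigma;A)$ by passing to a cover of degree divisible by $|A|$) is the right idea, but as written the phrase ``splits over a deep enough finite-index subgroup of $\Sigma$'' conflates passing to a subgroup of $\Sigma$ with passing to a subgroup of the extension; citing Martino is cleaner. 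None of this affects the overall correctness. The inclusion of $F\setminus\{1\}$ in your obstacle set is unused and can be dropped.
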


\begin{proof}
First assume   $F=\{1\}$, so $C_i=B_i$. In this case we shall see that most finitary fillings are fundamental groups of closed hyperbolic orbifolds.  Thus they are  virtually   surface groups,
hence conjugacy separable by a result of Martino \cite[Thm.\ 3.7]{Martino}.

We define an obstacle set $S=S_1\cup\dots\cup S_s$    in $P=\pi_1(\O)$ as follows. Let $r$ be a large integer (to be determined later).
Recall (see \cite{Th}) that each $B_i$ is either infinite cyclic or infinite dihedral.
In the cyclic  case, $B_i$ is generated by a single element $c$ of infinite order and we let $S_i:=\{c,c^2,\dots,c^r\}$. In the dihedral case,
$B_i$
 is generated by two involutions $a,b$ and we let $S_i:=\{a,b,ab,(ab)^2,\dots,(ab)^r\}$.

 Any
finite index normal subgroup   $K_i \lhd B_i $
with $K_i \cap S_i=\emptyset$
 is cyclic, generated by $c^{m_i}$
 or
$(ab)^{n_i}$ for some $m_i $  or $n_i$ larger than $ r$.
It follows that the quotient
$\pi_1(\O)/\llangle K_i \mid i=1,\dots,s \rrangle^{\pi_1(\O)}  $
is the fundamental group of a closed orbifold $\O'$, which is obtained from $\O$ by replacing each boundary component
by a conical point
(elliptic point, in the terminology of \cite{Th}) of order $m_i$ in the cyclic case,  and by a dihedral point
(corner reflector, in the terminology of \cite{Th}) of order $n_i$ in the dihedral case.

We claim that $\O'$ is hyperbolic if $r$ is large enough. By \cite[Thm.\ 13.3.6]{Th}, a  2-orbifold  admits a hyperbolic structure if and only if its Euler characteristic   is negative.
The Euler characteristic $\chi(\O')$
can be computed by the following formula (cf. \cite[13.3.3]{Th}, \cite{Scott}):
$$\chi(\O')=\chi(\O)+\sum
\frac{1}
{m_i} +\sum
\frac{1}{2n_i}.$$
Since $\chi(\O)$ is negative, so is
$\chi(\O')$ for $r$ large, and    the claim follows.   Defining $S$ using such an $r$, we deduce that most finitary fillings of $O$ are fundamental groups of closed hyperbolic orbifolds. This proves
  the proposition when   $F=\{1\}$.

In the general case, we have to use Dehn fillings (Theorem \ref{thm:periph_fil}). It is a standard fact \cite[Thm.\   7.11]{Bow} that $O$ is hyperbolic relative to the family $\calh=\{C_1,\dots, C_s\}$
(these are non-conjugate maximal virtually cyclic subgroups of the hyperbolic group $O$). Consider the obstacle set $\bar S=\xi\m(S)\cup S'$, where $S$ is the set constructed above in $\pi_1(\O)$
and $S'$ is provided by Theorem \ref{thm:periph_fil} (applied to $O$ and $\calh$).

Consider any collection of finite index normal subgroups $N_i \lhd C_i$ such that
$N_i \cap \bar S=\emptyset$, $i=1,\dots,s$, and set $N:=\llangle N_i \mid i=1,\dots,s\rrangle^O$. By Theorem \ref{thm:periph_fil} we have $N \cap C_i=N_i$ for each $i$,
so it remains to check that the quotient $O'=O/N$ is conjugacy separable.

Let $\varphi:O \to O'$ denote the natural epimorphism. Then $O'$ maps with finite kernel
$\varphi(F)$ onto $P/\xi(N) \cong P/\llangle \xi(N_i) \mid i =1,\dots,s \rrangle^P.$
Our choice of $\bar S$ guarantees that $ \xi(N_i)$ does not meet the set $S$, so $P/\xi(N)$ is the fundamental group of a hyperbolic orbifold $\O'$.
The exact sequence $\{1\} \to \varphi(F) \to O' \to \pi_1(\O') \to \{1\}$ implies  that $O'$ is virtually a surface group (see \cite[Thm.\ 4.3]{Martino}), hence it is conjugacy separable as above.
\end{proof}

\subsection{Conclusion}\label{conc}
We prove Theorem \ref{gener}, starting with a couple of lemmas. We   first assume that no $P_i$ is virtually cyclic, postponing the general case to the next subsection.

\begin{lem}\label{lem:concl}
Consider the graph of groups $\Gamma'$ constructed in Subsection \ref{sec:rid}, and  assume, additionally, that the subgroups $P_1,\dots,P_k$ are  residually finite
(and not virtually cyclic).
Then $\Gamma'$
satisfies
Assumption \ref{df:gr-gr}. In particular, its fundamental group $G'$ is residually finite (by Proposition \ref{prop:G-rf}).
\end{lem}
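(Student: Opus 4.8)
The plan is to take $V_1:=V_E$ and $V_2:=V_R\cup V_{QH}$. Since each edge of the canonical JSJ decomposition $\Gamma$ has exactly one endpoint in $V_E$ and $\Gamma'$ has the same underlying graph as $\Gamma$, this is the bipartite structure required by Assumption~\ref{df:gr-gr}, and $\Gamma'$ is not a point (as $T$ is assumed not to be a point in Section~\ref{sec:rid}). It then remains to verify three things: (1) $G'_u=G_u$ is residually finite for $u\in V_E$; (2) for $v\in V_2$, most finitary fillings of $G'_v$ with respect to its family of incident edge groups are residually finite; (3) each $G_u$ with $u\in V_E$ is a proper finitely generated subgroup of $G'=\pi_1(\Gamma')$, and $G'$ is hyperbolic relative to $\{G_u\mid u\in V_E\}$.

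For (1): each $G_u$, $u\in V_E$, is a maximal elementary subgroup of $G$, so, as no $P_i$ is virtually cyclic, it is either virtually cyclic or conjugate to some $P_i$; in either case it is finitely generated and residually finite, using the hypothesis that the $P_i$ are residually finite. For (2): if $v\in V_{QH}$, then $\Gamma'$ agrees with $\Gamma$ near $v$, the incident edge groups are exactly the subgroups $C_1,\dots,C_s$ attached to the boundary components of the orbifold $\mathcal{O}_v$ (here one uses that no $P_i$ is virtually cyclic, so that every boundary subgroup is an incident edge group), and Proposition~\ref{lem:f-by-o_fil-cs} gives that most finitary fillings of $G_v$ with respect to $\calh=\{C_1,\dots,C_s\}$ are conjugacy separable, hence residually finite. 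If $v\in V_R$, then by construction $G'_v$ is the fundamental group of a finite graph of groups all of whose edge groups equal the finite group $Z(G_v)$, with vertex groups the $G_e$ ($e\in E_v$, each virtually cyclic or conjugate to a $P_i$, hence residually finite) together with $\Z\times Z(G_v)$; so Remark~\ref{rem:gr_gps_w_f_edges-ceg}, applied to the subcollection $\{G_e\mid e\in E_v\}$ of these vertex groups, yields (2).

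Condition (3) is the substantive part. Finite generation of $G'$ is clear ($\Gamma'$ is finite with finitely generated vertex groups), and that of $G_u$ was noted above. For properness of $G_u$ in $G'$, work in the Bass--Serre tree $T'$ of $\Gamma'$: $G_u$ fixes a vertex $\tilde u$, any neighbour $\tilde v$ has stabilizer a conjugate of some $G'_v$ with $v\in V_2$, and this stabilizer strictly contains the stabilizer of the edge $\tilde u\tilde v$ (for $v\in V_R$ because of the factor $\Z\times Z(G_v)$; for $v\in V_{QH}$ because the incident edge group is the full preimage of a boundary subgroup of the hyperbolic orbifold $\mathcal{O}_v$, which is proper), so some element of $G'$ fixes $\tilde v$ but not $\tilde u$, whence $G_u\subsetneq G'$. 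The relative hyperbolicity of $G'$ I would derive from a combination theorem for relatively hyperbolic groups (Dahmani), applied to $\Gamma'$ with the following relative structures on vertex groups: each $G_u$ ($u\in V_E$) is hyperbolic relative to $\{G_u\}$ (or relative to $\es$ when virtually cyclic), and every incident edge group lies in $G_u$; each QH group $G'_v=G_v$ is hyperbolic relative to its incident edge groups $\{C_1,\dots,C_s\}$ by \cite[Thm.~7.11]{Bow}, as in the proof of Proposition~\ref{lem:f-by-o_fil-cs}; and each rigid $G'_v$, which splits over the finite subgroup $Z(G_v)$, is hyperbolic relative to $\{G_e\mid e\in E_v\}\cup\{\Z\times Z(G_v)\}$, after which the extra virtually cyclic peripheral $\Z\times Z(G_v)$ may be discarded. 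Since on both sides of every edge the edge group is conjugate into a peripheral subgroup of the adjacent vertex group, the combination theorem should return that $G'$ is hyperbolic relative to the peripheral subgroups not absorbed by the splitting, which is precisely $\{G_u\mid u\in V_E\}$ up to conjugacy; the final sentence of the lemma then follows from Proposition~\ref{prop:G-rf}.

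I expect the main obstacle to be exactly this last step: checking the hypotheses of the combination theorem in the somewhat degenerate configuration at hand, and confirming that its conclusion is relative hyperbolicity with respect to $\{G_u\mid u\in V_E\}$ and nothing larger. Two points need particular care: an edge group $G_e$ may be contained in the peripheral subgroup $G_u=P_i$ with infinite index, so it is not ``full'' there; and one must make sure that distinct incident edge groups at a rigid or QH vertex yield genuinely non-conjugate peripheral subgroups of $G'_v$, so that no unintended merging of peripheral subgroups occurs when forming $G'$.
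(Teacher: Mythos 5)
Your proposal is correct and follows essentially the same approach as the paper's proof: same bipartite structure $V_1=V_E$, $V_2=V_R\cup V_{QH}$, the same argument for residual finiteness of $V_1$ vertex groups, the same appeal to Proposition~\ref{lem:f-by-o_fil-cs} for QH vertices and to Remark~\ref{rem:gr_gps_w_f_edges-ceg} (with obstacle set $Z(G_v)\setminus\{1\}$) for rigid vertices, and the same invocation of combination theorems for relative hyperbolicity. The paper is actually terser than you on condition~(3) and does not spell out properness of $G_u$ at all; the two concerns you raise at the end do not create a gap, because in the relevant combination theorems the edge group only needs to be a maximal parabolic of the vertex group on the $V_2$ side (which it is, by construction of the tree of cylinders, and by the amalgam structure of $G'_v$ for $v\in V_R$ and the orbifold structure for $v\in V_{QH}$), while on the $V_1$ side $G_u$ is simply declared peripheral and the edge group may sit in it with arbitrary index.
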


\begin{proof}

Recall that $\Gamma'$ is bipartite, with $V_1=V_E$ and $V_2=V_R\cup V_{QH}$.
Vertex groups in $V_1$ are elementary, hence residually finite by   assumption.

For $v\in V_R$, the vertex group $G'_v$  is obtained by   amalgamating     $\Z\times C$ and the incident edge groups $G_e$ over a finite group $C$. Define $S=C\setminus\{1\}$ as the obstacle set.
As in Remark~\ref{rem:gr_gps_w_f_edges-ceg}, each finitary filling of $G'_v$ with respect to the incident edge groups is an amalgam over $C$, with factors  being   finite or
 $\Z\times C$.

For QH vertices, residual finiteness (indeed, conjugacy separability) of finitary fillings follows from Lemma \ref{lem:f-by-o_fil-cs}.
The assumption that no $P_i$ is virtually cyclic guarantees that $\{C_1,\dots,C_s\}$ is (up to conjugacy) the family of    incident edge groups (see Subsection \ref{JSJ}).

 Relative hyperbolicity follows from  standard combination theorems for relatively hyperbolic groups (cf. \cite{Dah,Osin-comb}) because vertex groups in $V_2$ are hyperbolic relative to incident edge groups:
 this was pointed out in the proof of  Lemma \ref{lem:f-by-o_fil-cs} in the QH case, and in the rigid case this is a consequence of Definition \ref{def:rh} (as the graph $\mathcal{K}$ one can take the
Bass-Serre tree associated to the splitting of $G'_v$, $v \in V_R$, as an amalgam over $C$ discussed above).
\end{proof}

\begin{lem}
\label{lem:outnormsub}
Suppose that $G$ is a finitely generated group, and $N$ is a centerless normal
subgroup of finite index in $G$.
\begin{enumerate}
\item Some finite index subgroup
$\Out_0(G )\leqslant\Out(G )$ is isomorphic to a quotient of a subgroup of $\Out(N )$ by
a finite normal subgroup $L$.
\item
Let $\mathcal{P}$ be a finite  family of subgroups in $G$ and let $\mathcal{Q}$ be a collection of representatives
of $N$-conjugacy classes among $\{N \cap gHg^{-1} \mid H \in \mathcal{P}, g \in G\}$.
Then some finite index subgroup of
$\Out(G;\mathcal{P})$ is isomorphic to a quotient of a subgroup of $\Out(N;\mathcal{Q})$ by
a finite normal subgroup. In particular, if ${\rm Out}(N;\mathcal{Q})$ is residually finite
then so is $\Out(G;\mathcal{P})$.
\end{enumerate}
\end{lem}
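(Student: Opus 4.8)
The plan is to prove both parts by analysing the restriction homomorphism $\Aut(G;N)\to\Aut(N)$ and then passing to outer automorphism groups, using the hypothesis $Z(N)=\{1\}$ to control its kernel.

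For part (1), I would put $A:=\{\phi\in\Aut(G)\mid\phi(N)=N\}$. Since $G$ is finitely generated it has only finitely many subgroups of index $|G:N|$, so $A$ has finite index in $\Aut(G)$; and since $N\lhd G$ we have $\Inn(G)\leqslant A$, so $\Out_0(G):=A/\Inn(G)$ has finite index in $\Out(G)$. Let $\rho\colon A\to\Aut(N)$ be restriction. The two facts to establish are: (a) $\ker\rho$ is finite, because an automorphism restricting to the identity on $N$ sends each $g$ into the coset $g\,Z_G(N)$ (comparing $\phi(gng\m)$ with $\phi(g)\,n\,\phi(g)\m$), and $Z_G(N)$ is finite since $Z_G(N)\cap N=Z(N)=\{1\}$ forces it to embed into $G/N$, so $\phi$ is determined among finitely many possibilities by its values on a finite generating set; (b) $\rho(\Inn(G))$ is the group $\Inn_G(N)\leqslant\Aut(N)$ of conjugations of $N$ by elements of $G$, which contains $\Inn(N)$ with index dividing $|G:N|$. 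Composing $\rho$ with $\Aut(N)\to\Out(N)$ gives $\bar\rho\colon A\to\Out(N)$ whose image $S_0$ is a subgroup of $\Out(N)$, with $\ker\bar\rho=\{\tau_n\mid n\in N\}\cdot\ker\rho$; since $\{\tau_n\mid n\in N\}$ has finite index in $\Inn(G)$ and $\ker\rho$ is finite, $\ker\bar\rho$ and $\Inn(G)$ are normal subgroups of $A$ differing only by finite groups, which relates $\Out_0(G)$ to $S_0$. To reach the precise conclusion I would work in the model case $Z_G(N)=\{1\}$, where $\ker\rho=\{1\}$ and $Z(G)=\{1\}$: there $\rho$ is injective, so $\Out_0(G)=A/\Inn(G)\cong\rho(A)/\Inn_G(N)\cong(\rho(A)/\Inn(N))/(\Inn_G(N)/\Inn(N))=S_0/L$, with $S_0=\rho(A)/\Inn(N)$ a subgroup of $\Out(N)$ and $L=\Inn_G(N)/\Inn(N)\cong G/N$ a finite normal subgroup; the general case is obtained from this by the device of replacing $G$ with $G/Z_G(N)$ and $N$ with its image (again centerless, since $N\cap Z_G(N)=\{1\}$), noting that every automorphism preserving $N$ preserves $Z_G(N)$.

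For part (2), I would run the same argument inside the peripheral structure. Set $A:=\Aut(G;\calp)\cap\{\phi\mid\phi(N)=N\}$, a finite-index subgroup of $\Aut(G;\calp)$ containing $\Inn(G)$, so that $A/\Inn(G)$ has finite index in $\Out(G;\calp)$. The new point to check is that the restriction map lands in $\Aut(N;\calq)$: if $\phi\in A$ then $\phi$ permutes the $G$-conjugates of the $P_i$ and preserves $N$, hence permutes the family $\{N\cap gP_ig\m\mid g\in G,\ i\}$, which up to $N$-conjugacy is precisely $\calq$, so $\phi|_N\in\Aut(N;\calq)$; likewise $\Inn_G(N)\leqslant\Aut(N;\calq)$ contains $\Inn(N)$ with finite index, and $\ker\rho$ is finite as before. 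The computation of part (1), with $\Out(N;\calq)$ in place of $\Out(N)$, then yields that a finite-index subgroup of $\Out(G;\calp)$ is isomorphic to a quotient of a subgroup of $\Out(N;\calq)$ by a finite normal subgroup. The last assertion follows from three stability properties of residual finiteness: it is inherited by subgroups; it is inherited by quotients by finite normal subgroups (given $c$ outside such a subgroup $L$, use residual finiteness of the ambient group to pick a finite-index normal subgroup $M_0$ missing the finite set $\{c\ell\m\mid\ell\in L\}$ and take $M:=M_0L$); and it passes between a group and its finite-index subgroups. Hence residual finiteness of $\Out(N;\calq)$ descends to the subgroup $S_0$, then to $S_0/L\cong\Out_0(G;\calp)$, then to $\Out(G;\calp)$.

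The step I expect to be the main obstacle is the bookkeeping in (1): the restriction map directly gives only a ``commensurable up to finite kernels'' relation between $\Out_0(G)$ and $S_0\leqslant\Out(N)$, and sharpening this to the stated form — a finite-index subgroup of $\Out(G)$ being *exactly* a quotient of a subgroup of $\Out(N)$ by a finite normal subgroup — requires disposing of the finite group $\ker\rho$, which a priori need not consist of inner automorphisms. One must set things up (for instance via the reduction to $Z_G(N)=\{1\}$) so that this finite group does not survive in the final comparison, and verifying that the reduction does not disturb the relevant automorphism groups is the delicate point.
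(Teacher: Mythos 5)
Your high-level strategy — restrict automorphisms to $N$, use $Z(N)=\{1\}$ to control the kernel of restriction, and compare with restrictions of inner automorphisms — is the same as the paper's. But the gap you flag at the end is real, and the reduction to $Z_G(N)=\{1\}$ does not close it. Passing to $\bar G=G/Z_G(N)$, $\bar N=$ image of $N$, one checks that the induced map $\Out_0(G)\to\Out(\bar G)$ has kernel equal to the image of $\ker\rho$ (a finite normal subgroup $K$). So even after the reduction you only obtain that $\Out_0(G)/K$ is a quotient of a subgroup of $\Out(N)$ by a finite normal subgroup — a strictly weaker statement than the lemma. In particular, for the intended application residual finiteness of $\Out(N;\mathcal Q)$ would then only give residual finiteness of $\Out_0(G)/K$, and residual finiteness does \emph{not} pull back through quotients by finite normal subgroups (Deligne's non-residually-finite central extension of $Sp_{2n}(\Z)$ by $\Z/2\Z$ is the standard example). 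So the reduction does not repair the argument even for the stated corollary, let alone for the isomorphism claim.

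The fix, which is what the paper does, is to start with a smaller subgroup than your $A=\{\phi:\phi(N)=N\}$: take $\Aut_0(G)$ to be the automorphisms of $G$ that map $N$ to itself \emph{and} induce the identity on the finite quotient $G/N$. This still has finite index in $\Aut(G)$ (it is the kernel of $A\to\Aut(G/N)$), and its image in $\Out(G)$ is the required $\Out_0(G)$. For $\phi\in\Aut_0(G)$ with $\phi|_N=\mathrm{id}$, your computation gives $\phi(g)\in gZ_G(N)$, while triviality on $G/N$ gives $\phi(g)\in gN$; combining, $\phi(g)\in g\bigl(Z_G(N)\cap N\bigr)=gZ(N)=\{g\}$, so $\phi=\mathrm{id}$. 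Thus restriction $\Aut_0(G)\to\Aut(N)$ is \emph{injective}, not merely finite-kernel, and there is nothing left to dispose of: writing $S$ for the image and $I\leqslant S$ for the restrictions of those inner automorphisms of $G$ lying in $\Aut_0(G)$, one has $\Inn(N)\leqslant I$ with finite index, $I\lhd S$, and $\Out_0(G)\cong S/I\cong\bigl(S/\Inn(N)\bigr)/\bigl(I/\Inn(N)\bigr)$, which is exactly the stated form with $L=I/\Inn(N)$. Your treatment of part (2) then goes through verbatim with this corrected $\Aut_0(G)$: intersecting with $\Aut(G;\mathcal P)$ keeps the restriction injective, and the observation that $\mathcal P$-preservation descends to $\mathcal Q$-preservation is unaffected.
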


\begin{proof} The first assertion is standard (see for instance \cite[Lemma 5.4]{G-L-i}). One defines     $\Aut_0(G)$ as the set    of automorphisms   mapping $N$ to itself and acting as the identity on $G/N$,
and $\Out_0(G)$ is its image in $\Out(G)$.   Using the fact that $N$ is centerless, one shows that the natural map $\Aut_0(G)\to \Aut(N)$ is injective.
The group
  $L$ comes from the action of inner automorphisms of $G$.

For 2, observe that    automorphisms in $ \Aut_0(G)$ preserving the set  of conjugacy classes of   groups in $\calp$ also preserve the (finite) set of $N$-conjugacy classes of   subgroups from $\mathcal Q$.
\end{proof}

We can now prove Theorem \ref{gener}   in the case when no peripheral subgroup is virtually cyclic:
 \begin{thm}
\label{thm:gener-non-vc}
Let $G$ be a group hyperbolic relative to  a family of proper finitely generated subgroups $\calp=\{P_1,\dots,P_k\}$.
If $G$ is one-ended relative to $\calp$,   no $P_i$ is virtually cyclic,
and every $P_i$ is residually finite, then $\Out(G; \calp)$ is residually finite.
 \end{thm}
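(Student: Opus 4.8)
The plan is to carry out the two-step strategy sketched in the introduction: replace the rigid vertex groups as in Section~\ref{sec:rid}, and then make the edge groups finite via Dehn fillings, reducing to the infinitely-ended case where \cite{M-O-norm_aut} and conjugacy separability (Grossman's method) apply. First I would dispose of the case where the canonical elementary JSJ decomposition $\Gamma$ of $G$ relative to $\calp$ is trivial. If the unique vertex is rigid, $\Out(G;\calp)$ is finite by \cite{GL6}; if it is QH, then $G$ is (virtually) a closed surface group and $\Out(G)$ is residually finite by \cite{Grossman,A-K-T,Martino}; and the elementary case cannot occur since every $P_i$ is proper. So assume $\Gamma$ is nontrivial, and let $\Gamma'$, with fundamental group $G'$, be the graph of groups constructed in Section~\ref{sec:rid}.

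By Lemma~\ref{reduc}, $\Out^r(G)$ has finite index in $\Out(G;\calp)$, so (residual finiteness being inherited by and from finite-index subgroups) it suffices to prove that $\Out^r(G)$ is residually finite; and by Lemma~\ref{rid} we may view $\Out^r(G)$ as a subgroup of $\Out(G')$, so it is enough to show that $\Out(G')$ is residually finite. By Lemma~\ref{lem:concl}, $\Gamma'$ satisfies Assumption~\ref{df:gr-gr}, and moreover its non-elementary vertex groups ($v\in V_R\cup V_{QH}$) have \emph{conjugacy separable} finitary fillings with respect to the incident edge groups: for a rigid vertex such a filling is an amalgam over the finite group $Z(G_v)$ of finite groups and a copy of $\Z\times Z(G_v)$, hence virtually free, hence conjugacy separable by \cite{Dyer-2}; for a QH vertex this is Proposition~\ref{lem:f-by-o_fil-cs} (it is here that the hypothesis ``no $P_i$ virtually cyclic'' enters, guaranteeing that the subgroups $C_i$ of Subsection~\ref{qhg} are, up to conjugacy, exactly the incident edge groups). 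In particular $G'$ is residually finite by Proposition~\ref{prop:G-rf}, and if $E(G')=\{1\}$ then Proposition~\ref{prop:Out-rf} immediately gives that $\Out(G')$ is residually finite, finishing the argument. This already covers the torsion-free situation of Section~\ref{tfrh}, where all vertex and edge groups, and hence $G'$, are visibly torsion-free.

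The step I expect to be the main obstacle is the general case, where the maximal finite normal subgroup $E(G')$ need not be trivial, so Proposition~\ref{prop:Out-rf} does not apply to $G'$ directly: indeed $E(G')$ fixes the Bass--Serre tree of $\Gamma'$ pointwise, so it lies in every edge group and in $E(G'_v)=Z(G_v)$ for each rigid $v$, and with torsion present this can be nontrivial. The natural fix is to pass to $\bar G':=G'/E(G')$, which (since $E(G')$ is contained in every edge group of $\Gamma'$) is the fundamental group of the quotient graph of groups obtained by dividing every vertex and edge group by $E(G')$; this quotient again satisfies Assumption~\ref{df:gr-gr}, as residual finiteness, conjugacy separability and relative hyperbolicity all survive quotients by finite normal subgroups, and now $E(\bar G')=\{1\}$, so $\Out(\bar G')$ is residually finite by Proposition~\ref{prop:Out-rf}. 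The remaining, genuinely delicate, point is that the resulting homomorphism $\Out^r(G)\to\Out(G')\to\Out(\bar G')$ must be shown to be injective, which I would approach by re-running the kernel computation of Lemma~\ref{rid} in $\bar G'$, analysing how twists near the $V_E$-vertices and mapping-class-group elements at the QH-vertices are affected by reducing edge groups modulo $E(G')$. An alternative route is to first replace $G$ by a suitable finite-index normal subgroup with trivial centre and appeal to Lemma~\ref{lem:outnormsub}, working in a setting where the centres of the rigid vertex groups, and hence $E(G')$, are easier to control. Granting the injectivity, $\Out^r(G)$ embeds in the residually finite group $\Out(\bar G')$, hence is residually finite, and therefore so is $\Out(G;\calp)$.
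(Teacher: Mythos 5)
Your proposal correctly handles the trivial‐JSJ case, correctly reduces the problem via Lemmas~\ref{reduc} and~\ref{rid} to showing residual finiteness of a finite-index subgroup $\Out^r(G)\leqslant\Out(G;\calp)$ viewed inside $\Out(G')$, and correctly notes that Proposition~\ref{prop:Out-rf} would finish the job when $E(G')=\{1\}$ (your remark that rigid-vertex fillings are amalgams of finite and virtually cyclic groups over a finite group, hence virtually free, hence conjugacy separable, is correct and does show that $\Gamma'$ satisfies the hypotheses of Proposition~\ref{prop:Out-rf} in that case). However, there is a genuine gap at the point you yourself flag: the case $E(G')\ne\{1\}$. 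Passing to $\bar G'=G'/E(G')$ and then needing injectivity of $\Out^r(G)\to\Out(\bar G')$ is exactly the kind of issue the whole construction was designed to avoid, and you give no argument for it. There is no reason for this map to be injective in general: twists can become inner in the quotient, and the kernel computation of Lemma~\ref{rid} is specifically tied to \emph{not} changing the groups $Z(G_v)$ and $Z_{G_v}(G_e)$, which the quotient by $E(G')$ does change.

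What you are missing is a structural observation that makes the whole issue disappear: if $\Gamma$ has at least one rigid vertex $v$, then the construction of $G'_v$ as a nontrivial amalgam over the finite group $Z(G_v)$ with a $\Z\times Z(G_v)$ factor forces $G'$ to split nontrivially over a finite group, and $G'$ is not virtually cyclic, so $G'$ has \emph{infinitely many ends}. Combined with residual finiteness of $G'$ (Proposition~\ref{prop:G-rf}), one then invokes \cite[Thm.\ 1.5]{M-O-norm_aut} directly to get $\Out(G')$ residually finite, with no need for Proposition~\ref{prop:Out-rf}, conjugacy separability, or any $E(G')$ analysis at all in this case. Your machinery is only needed when there are no rigid vertices, in which case $G'=G$, and then the correct move (which you mention as an ``alternative'' but do not develop) is to handle $E(G)\ne\{1\}$ by passing to a finite-index normal subgroup $N\lhd G$ with $N\cap E(G)=\{1\}$, applying the $E=\{1\}$ case to $N$ (with the induced peripheral structure $\calq$), and transferring residual finiteness back via Lemma~\ref{lem:outnormsub}. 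As written, your main route does not constitute a complete proof, and the trichotomy ``rigid vertex present / no rigid vertex and $E(G)=1$ / no rigid vertex and $E(G)\ne1$'' is the missing organizing idea.
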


\begin{proof}
Consider  the canonical elementary JSJ tree  $T$ of $G$ relative to the family $\calp$, as in Subsection~\ref{JSJ}.
If $T$   is trivial (a single vertex), then either $G$ is rigid or it   maps onto the fundamental group of a closed hyperbolic 2-orbifold
with finite kernel.
In the former case $\Out(G;\calp)$ is finite (see \cite[Theorem 3.9]{GL6}) and in the latter case $G$ contains a surface subgroup
of finite index (see for instance \cite[Thm.\ 4.3]{Martino}). Therefore in this case $\Out(G)$ is residually finite by a combination of Grossman's theorem \cite[Thm.\  3]{Grossman} with Lemma \ref{lem:outnormsub}.

Hence we can further assume that the canonical JSJ tree $T$  is non-trivial.  Let us apply the construction of Section \ref{sec:rid}.
By Lemma \ref{rid}, a finite index subgroup $\Out^r(G)$ of   ${\Out}(G;\mathcal{P})$ embeds into ${\rm Out}(G')$, where $G'$ is the fundamental group of the bipartite graph of groups $\Gamma'$.

If $T$ has at least one rigid vertex, then the group $G'$ has infinitely many ends, because of the way we  constructed $\Gamma'$. Furthermore, $G'$ is residually finite by Lemma \ref{lem:concl}.
Therefore ${\rm Out}(G')$ is residually finite by \cite[Thm.\  1.5]{M-O-norm_aut},
so its subgroup $\Out^r(G)$, and also ${\Out}(G;\mathcal{P})$,  are residually finite.

Hence we can suppose that the JSJ decomposition
of $G$ has no rigid vertices. In this case $G'=G$ by construction, and $V_2=V_{QH}$.

There are two cases.
Assume, at first, that $E(G)=\{1\}$.
Then, according to Proposition \ref{lem:f-by-o_fil-cs}, the graph of groups $\Gamma'$ satisfies all the assumptions of Proposition \ref{prop:Out-rf}, which
allows us to conclude that ${\rm Out}(G)$ (and, hence, ${\rm Out}(G;\mathcal{P})$) is residually finite.

If  $E(G) \neq \{1\}$, we shall deduce residual finiteness of ${\Out}(G;\calp)$ from Lemma \ref{lem:outnormsub}. As $E(G)$ is finite and  $G$ is residually finite (by    Lemma \ref{lem:concl}),  there exists a finite index normal subgroup
$N\lhd G$ such that $N\cap E(G)=\{1\}$.
It is standard that $N$ is hyperbolic relative to   the family $\calq$ described in the second part of Lemma \ref{lem:outnormsub} (this follows immediately from  Definition \ref{def:rh}).
Note that groups in $\calq$ are finitely generated, residually finite,  not virtually cyclic, and they are proper subgroups of $N$ since groups in $\calp$ have infinite index in $G$
(indeed, each $P_i\in \calp$ is almost malnormal in $G$ -- see \cite[Thm.\  1.4]{Osin-RHG}).

The group $E(N)$ is trivial because it is characteristic in $N$, hence it is  contained in $E(G)\cap N=\{1\}$. In particular, $N$ is centerless.
Moreover, as pointed out in \cite{GLMcCool}, it follows from Theorem IV.1.3 of \cite{DD} that $N$ is one-ended relative to $\calq$.
Since $E(N)=\{1\}$, we know that $\Out(N;\mathcal{Q})$  is residually finite by the previous case, and Lemma \ref{lem:outnormsub} implies that ${\Out}(G;\calp)$ is residually finite.
\end{proof}

The arguments given above show the following facts, which may be of independent interest:

\begin{cor}\label{cor:virt_out_emb}
Let $G$ be a group hyperbolic relative to a   family $\calp=\{P_1,\dots,P_k\}$ of   proper finitely generated residually finite groups,
such that no $P_i$ is virtually cyclic. Suppose that $G$ is one-ended relative to $\calp$.

If the canonical JSJ decomposition of $G$ over elementary subgroups relative to $\calp$ has no rigid vertices, then $G$ is residually finite. Otherwise, a finite index subgroup of $\Out (G;\calp)$ embeds into
$\Out(G')$, where $G'$ is a finitely generated residually finite group with infinitely many ends.

In all cases, $\Out (G;\calp)$ virtually embeds into $\Out(G')$, where $G'$ is a finitely generated
 residually finite relatively hyperbolic group. \qed
\end{cor}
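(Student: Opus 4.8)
The plan is to revisit the proof of Theorem~\ref{thm:gener-non-vc} and simply read off the objects it produces, since all three assertions are byproducts of that single argument. As there, the starting point is the canonical elementary JSJ tree $T$ of $G$ relative to $\calp$ (Subsection~\ref{JSJ}), and the cases are organized by whether $T$ is trivial and, if not, whether it has a rigid vertex.

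First suppose $T$ is trivial. If its unique vertex is QH, then $G$ surjects with finite kernel onto the fundamental group of a closed hyperbolic $2$-orbifold, hence contains a finite index surface subgroup and is residually finite; since there is no rigid vertex, this lies in the first branch of the first assertion, and for the last assertion one takes $G'=G$, which is relatively hyperbolic by hypothesis. If the unique vertex is rigid, then $\Out(G;\calp)$ is finite by \cite[Theorem 3.9]{GL6}; its trivial subgroup then has finite index and embeds into $\Out(F_2)$, so both the second branch of the first assertion and the last assertion hold with $G'=F_2$, a finitely generated residually finite hyperbolic group with infinitely many ends.

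Now suppose $T$ is nontrivial, and run the construction of Section~\ref{sec:rid}, obtaining a graph of groups $\Gamma'$ with fundamental group $G'$ such that, by Lemma~\ref{rid}, the finite index subgroup $\Out^r(G)\leqslant\Out(G;\calp)$ embeds into $\Out(G')$. By Lemma~\ref{lem:concl}, $\Gamma'$ satisfies Assumption~\ref{df:gr-gr}, so $G'$ is finitely generated and hyperbolic relative to its elementary vertex groups, and it is residually finite by Proposition~\ref{prop:G-rf}. If $T$ has no rigid vertex, then $G'=G$ by construction, so $G$ itself is residually finite and $G'=G$ also serves for the last assertion. If $T$ has a rigid vertex $v$, then the free factor $\Z\times Z(G_v)$ inserted into $G'_v$ yields a nontrivial splitting of $G'$ over the finite group $Z(G_v)$ in which the $\Z\times Z(G_v)$ side has infinite index; hence $G'$ is not virtually cyclic and (as observed in the proof of Theorem~\ref{thm:gener-non-vc}) has infinitely many ends. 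This gives the second branch of the first assertion, and the last assertion follows since $G'$ is also relatively hyperbolic.

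There is no real difficulty beyond bookkeeping over the proof of Theorem~\ref{thm:gener-non-vc}. The only points needing attention are the two degenerate $T$-trivial cases, where one must exhibit an ad hoc $G'$ (here $F_2$, or $G$ itself) because the construction of Section~\ref{sec:rid} is only invoked for nontrivial $T$, together with the elementary observation that the $\Z\times Z(G_v)$ summand placed at each rigid vertex is exactly what forces $G'$ to be infinitely-ended.
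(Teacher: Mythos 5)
Your proof is correct and follows the paper's intended approach: the corollary is simply extracted from the case analysis in the proof of Theorem~\ref{thm:gener-non-vc}, with the construction of $\Gamma'$ from Section~\ref{sec:rid} supplying $G'$ whenever $T$ is nontrivial (Lemma~\ref{rid} for the embedding, Lemma~\ref{lem:concl} with Proposition~\ref{prop:G-rf} for residual finiteness and relative hyperbolicity, and the $\Z\times Z(G_v)$ factor for infinitely many ends). Your only genuine fill-in is the ad hoc choice $G'=F_2$ for the degenerate case of a trivial rigid $T$, where $\Out(G;\calp)$ is finite and the statement holds vacuously for the trivial finite-index subgroup; this is legitimate, and your choice $G'=G$ for the trivial QH case is likewise correct since $\Out(G;\calp)\leqslant\Out(G)$ and $G$ is relatively hyperbolic by hypothesis.
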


\subsection{Allowing virtually cyclic $P_i$'s} \label{vcpi}

We now prove Theorem \ref{gener}   in general, allowing virtually cyclic peripheral subgroups. We may assume that   all $P_i$'s are infinite, since removing finite groups from
$\calp$ does not affect relative one-endedness.

The new phenomenon occurs at QH vertices of the canonical JSJ decomposition $\Gamma$. With the notations of Subsection \ref{qhg},
it is still true that incident edge groups of $G_v$ are preimages of fundamental groups of boundary components of $\mathcal{O}$,
but there may now be boundary components $\calc_j$ such that $C_j=\xi\m(B_j)$ is not an incident edge group, but is conjugate to a group in $\calp$.

In Section \ref{sec:rid} we used groups $PMCG(G_v)$ and $PMCG^\partial(G_v)$, defined using incident edge groups of $\Gamma$.
We must now  replace $PMCG(G_v)$ with a subgroup, by requiring that automorphisms act on groups $C_j$  as above as inner automorphisms $\tau_{a_j}$ of $G_v$. The group $PMCG^\partial(G_v)$ is
replaced by the preimage of this subgroup under $\pi_v$. We do not keep track of the $a_j$'s; the corresponding $\calc_j$'s should be thought of as punctures rather than boundary components,
in particular there is no twist near them (in the context of Subsection \ref{surf}, isotopies are free on the components $\calc_j$).

With this modification, all arguments given in  Section \ref{sec:rid}, \ref{tfrh} and \ref{gtor} go through. In Proposition \ref{lem:f-by-o_fil-cs},
we define $\calh$ using only the groups $C_j$ which are   incident edge groups.
The hyperbolic orbifold $\O'$
may then have a non-empty boundary.
In this case its fundamental group is virtually free, hence conjugacy separable by \cite{Dyer}.

\subsection{Proof of Corollary \ref{main1}}
\label {pfcor}
  Suppose that $G$ is hyperbolic relative to a family $\calp=\{P_1,\dots,P_k\}$ of virtually polycyclic groups.
Without loss of generality we may assume that no $P_i$ is virtually cyclic (see Subsection \ref{defrhyp}).
The result is true if $G$ is virtually polycyclic \cite{Wehr}. Otherwise, Theorem \ref{gener} applies since every $P_i$ is residually finite.
To conclude, note that $\Out(G;\calp)$ has finite index in $\Out(G)$ because groups in $\calp$ are characterized (up to conjugacy) as maximal virtually polycyclic subgroups which are not virtually cyclic
(see \cite[Lemma 3.2]{M-O-fixed_sbgp} for a more general result).

\section{Residual $p$-finiteness for automorphism   groups
} \label{rep}

\subsection{Residual $p$-finiteness}

Given a prime $p$ and a group $G$,   we will say that a subgroup $K \leqslant G$ has \emph{$p$-power index} in $G$ if $|G:K|=p^k$ for some $k\ge 0$.
\begin{rem}\label{car}
The intersection of two subgroups $H_1,H_2$ of $p$-power index is not necessarily of  $p$-power index,   but it is if $H_1$ and $H_2$ are normal
(for then there is an embedding of $G/(H_1\cap H_2)$ into $G/H_1\times G/H_2$). In particular, if $G$ is finitely generated,  any normal  subgroup
$H$ of $p$-power index contains one which is  characteristic in $G$, namely  the intersection of all subgroups of $G$ with the same index as $H$.
\end{rem}

The collection of normal subgroups of $G$ of $p$-power index forms a basis of neighborhoods of the identity in $G$, giving rise to the \emph{pro-$p$ topology} on $G$.
As in the case of residually finite groups, the pro-$p$ topology on $G$ is Hausdorff if and only if $G$ is \emph{residually $p$-finite}: given $g\ne 1$, there is a homomorphism
$\varphi$ from $G$ to a finite $p$-group such that $\varphi(g)\ne 1$ (by Remark \ref{car}, one may assume that $\ker \varphi$ is characteristic if $G$ is finitely generated).

Residual $p$-finiteness is a much more delicate condition than residual finiteness. It is still
clearly stable under direct products, but in general it is not stable under semidirect products  ($\Z$ is residually $p$-finite for any prime $p$, but it is easily checked that
the Klein bottle group, the non-trivial semidirect product $\Z \rtimes \Z$, is not residually $p$-finite if $p>2$). Even more strikingly, for any given set of prime numbers $\Pi$, there exists a $3$-generated
center-by-metabelian group which is residually $p$-finite if and only if $p \in \Pi$ -- see \cite{Hartley}.

\begin{lem}\label{lem:resp-sbgp} If $A$ has a residually $p$-finite normal subgroup $B$ of $p$-power index, then $A$ is residually $p$-finite.
\end{lem}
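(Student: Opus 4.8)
The plan is to take a nontrivial element $a \in A$ and produce a homomorphism from $A$ onto a finite $p$-group not killing $a$, distinguishing two cases according to whether $a$ lies in $B$ or not. The first observation is that $B$, being normal of $p$-power index, contains a subgroup $B_0$ which is normal in $A$ and still of $p$-power index in $A$ and in $B$; indeed one may replace $B$ by the core $\bigcap_{x \in A} xBx\m$, a finite intersection of normal-in-$B$ subgroups of $p$-power index, hence itself of $p$-power index by Remark \ref{car}. So without loss of generality we may assume $A/B$ is a finite $p$-group while keeping $B$ residually $p$-finite. (One should check that a finite-index subgroup of a residually $p$-finite group is again residually $p$-finite; this is immediate since $B_0 \leqslant B$ and $B$ residually $p$-finite gives, for each $b \neq 1$ in $B_0$, a map $B \to $ finite $p$-group nonzero on $b$, which restricts to $B_0$.)

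Case $a \notin B$: then the composite $A \to A/B$ is a homomorphism onto a finite $p$-group with $a$ mapping nontrivially, and we are done immediately.

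Case $a \in B$, $a \neq 1$: using residual $p$-finiteness of $B$, pick a normal subgroup $N \lhd B$ of $p$-power index with $a \notin N$. Since $A$ is finitely generated (this hypothesis is in force throughout the paper's setting; if not, one works slightly harder), $B$ is finitely generated as a finite-index subgroup, so by Remark \ref{car} we may enlarge $N$ to a subgroup $N'$ which is characteristic in $B$ and still of $p$-power index with $a \notin N'$ — take $N'$ to be the intersection of all subgroups of $B$ of index $|B:N|$. Being characteristic in $B$ and $B$ normal in $A$, $N'$ is normal in $A$. Now $A/N'$ is an extension of the finite $p$-group $B/N'$ by the finite $p$-group $A/B$, hence is itself a finite $p$-group, and the image of $a$ in $A/N'$ is nontrivial. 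This exhibits the required homomorphism.

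The only mildly delicate point — and the step I would flag as the main obstacle — is the passage from $N$ to the characteristic subgroup $N'$ of $B$ of $p$-power index: one must know that a finitely generated group has only finitely many subgroups of a given index, so that $N'$ (the intersection of all index-$|B:N|$ subgroups) has finite index, and then that this index is a power of $p$ because each factor $B/(\text{such subgroup})$ embeds, via Remark \ref{car}, into a product of $p$-groups. Everything else is a routine extension-of-$p$-groups-is-a-$p$-group argument. If one does not wish to invoke finite generation, an alternative is to observe that $\bigcap_{x\in A} xNx\m$ is a finite intersection (as $|A:B|<\infty$ bounds the number of conjugates) of $p$-power-index normal-in-$B$ subgroups, hence of $p$-power index by Remark \ref{car}, and it is normal in $A$; this avoids characteristic subgroups entirely.
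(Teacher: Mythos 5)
Your proof is correct, but a couple of remarks are in order.

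First, the opening reduction is vacuous: the lemma already assumes $B$ is \emph{normal} in $A$ of $p$-power index, so the core $\bigcap_{x\in A}xBx^{-1}$ is $B$ itself and $A/B$ is already a finite $p$-group. Nothing is gained there; you seem to have momentarily read ``subgroup'' instead of ``normal subgroup.''

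Second, your main line of argument (pass from $N$ to a characteristic subgroup $N'$ of $B$) is correct but invokes finite generation, a hypothesis not present in the lemma and not needed for it. The ``alternative'' you flag at the end --- take $H=\bigcap_{x\in A}xNx^{-1}$, a finite intersection since $|A:B|<\infty$, note it is normal in $A$, of $p$-power index in $B$ by Remark \ref{car}, hence of $p$-power index $|A:B|\,|B:H|$ in $A$, and misses $a$ --- is exactly the paper's proof, and it is shorter, needs no finite generation, and does not require the case split on whether $a\in B$ (if $a\notin B$ then trivially $a\notin N$ for any $N\leqslant B$, so one argument handles both cases). You correctly identified the delicate point (controlling the index of the intersection via Remark \ref{car}), but the characteristic-subgroup detour is unnecessary; what the core argument buys is generality (no finite generation) and brevity.
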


\begin{proof}
Take any $x \in A \setminus\{1\}$. Since $B$ is residually $p$-finite, there exists a $p$-power index normal subgroup $N \lhd B$ such that $x \notin N$. The intersection $H$ of all $A$-conjugates of $N$ is
  normal in $A$ and has $p$-power index in $B$ (see Remark \ref{car}), so
$|A:H|=|A:B| |B:H|$ is a power of $p$. Since    $x \notin H$, we can conclude that $A$ is residually $p$-finite.
\end{proof}

\begin{rem} Combining Lemma \ref{lem:resp-sbgp} with an induction on the subnormal index, one can actually prove that any group containing a subnormal residually $p$-finite
subgroup of $p$-power index is itself residually $p$-finite.

It is not difficult to see that not every $p$-power index subgroup of a group $G$ has to be closed in the pro-$p$ topology.
In fact, a $p$-power index subgroup $K \leqslant G$ is closed in the pro-$p$ topology on $G$ if and only if $K$ is subnormal in $G$ (cf. \cite[Lemma A.1]{Toinet}).
\end{rem}

\begin{lem} \label{Gsurfini}
If $G$ is residually $p$-finite, and $N \lhd G$ is a finite normal subgroup, then $G/N$ is also residually $p$-finite.
\end{lem}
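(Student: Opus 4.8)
The plan is to unwind the definition of residual $p$-finiteness for $G/N$: given $g \in G$ with $gN \neq N$ (equivalently $g \notin N$), it suffices to produce a normal subgroup $M \lhd G$ of $p$-power index with $N \subseteq M$ and $g \notin M$. Indeed, the quotient map $G \to G/M$ then kills $N$, hence factors through a homomorphism $G/N \to G/M$ onto a finite $p$-group that sends $gN$ to a non-trivial element.

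To build such an $M$, first I would apply residual $p$-finiteness of $G$ to the finite set $g^{-1}N = \{g^{-1}n \mid n \in N\}$, all of whose elements are non-trivial because $g \notin N$. For each of these finitely many elements residual $p$-finiteness gives a normal subgroup of $p$-power index avoiding it, and intersecting over this finite set yields a single normal subgroup $K \lhd G$ of $p$-power index with $g^{-1}n \notin K$ for every $n \in N$ (that a finite intersection of normal subgroups of $p$-power index again has $p$-power index is Remark \ref{car}). Since $x \in K \iff x^{-1} \in K$, this says $n^{-1}g \notin K$, i.e. $g \notin nK$, for all $n \in N$; hence $g \notin NK$.

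Now set $M := NK$. As $N$ and $K$ are both normal in $G$, so is $M$, and $|G:M|$ divides $|G:K|$, so $M$ has $p$-power index; moreover $N \subseteq M$ while $g \notin M$ by the previous step. This is exactly what was required, so $G/N$ is residually $p$-finite.

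There is essentially no serious obstacle here; the only point worth flagging is that nothing about $N$ beyond finiteness is needed (in fact $N$ is automatically a finite $p$-group, being a finite subgroup of a residually $p$-finite group, but this is never used): the index bound $|G:NK|$ divides $|G:K|$ takes care of enlarging $K$ to contain $N$ ``for free'', which is the one place where the argument could a priori have failed.
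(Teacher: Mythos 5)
Your argument is correct, and it is essentially the paper's proof with the topological shorthand unpacked: the paper observes that a finite set is closed in the pro-$p$ topology of a residually $p$-finite group, hence $N$ is an intersection of normal subgroups of $p$-power index, and your explicit construction of $M = NK$ (with $K$ a normal $p$-power-index subgroup avoiding the finite set $g^{-1}N$) is precisely what that closure argument amounts to once unwound. The proof is complete and the use of Remark~\ref{car} to keep the intersection of $p$-power index is the right point to flag.
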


\begin{proof}
Indeed, since $G$ is residually $p$-finite, any finite subset of $G$ is closed in the pro-$p$ topology on $G$. Therefore $N$ is the intersection of $p$-power index normal subgroups of $G$, and so
$G/N$ is residually $p$-finite.
\end{proof}

For any prime $p$ and any group $H$, let $\Aut_p(H)$ be the subgroup of $\Aut(H)$ which consists of automorphisms that act trivially on the first mod-$p$ homology  of $H$.
Namely, let $K_p:=[H,H]H^p$ be the verbal subgroup of $H$, which is  the product of the derived subgroup $[H,H]$ and the subgroup $H^p$ generated by all the $p$-th powers of elements in $H$. Then
$$\Aut_p(H)=\{\alpha \in\Aut(H) \mid \alpha(hK_p)=hK_p  \mbox{ for all } h \in H\}.$$
If $H$ is finitely generated, then $K_p$  has finite index in $H$, therefore $\Aut_p(H)$ will have finite index in $\Aut(H)$.
Observe also that all inner automorphisms are in $\Aut_p(H)$ because $H/K_p$ is abelian, and
the group $\Out_p(H):=\Aut_p(H)/\Inn(H)$ has finite index in $\Out(H)$.

The following classical theorem of P. Hall will be useful (cf. \cite[5.3.2, 5.3.3]{Robinson}):

\begin{lem}\label{lem:Hall} If $H$ is a finite $p$-group, then $Aut_p(H)$ is also a finite $p$-group. \qed
\end{lem}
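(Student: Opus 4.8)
The plan is to identify $\Aut_p(H)$ with a subgroup of the stability group of a canonical filtration of $H$, and then invoke the classical stability-group theorem (essentially \cite[5.3.2, 5.3.3]{Robinson}) that the automorphisms stabilizing such a filtration form a $p$-group. First I would recall the Burnside basis theorem: for a finite $p$-group $H$ the verbal subgroup $K_p=[H,H]H^p$ equals the Frattini subgroup, so $\alpha\in\Aut_p(H)$ precisely when $h^{-1}\alpha(h)\in K_p$ for all $h\in H$. Let $\gamma_1=H$ and $\gamma_{i+1}=[\gamma_i,H]\gamma_i^p$ be the lower exponent-$p$ central series of $H$, so that $\gamma_2=K_p$; since $H$ is a finite $p$-group this is a strictly decreasing series $H=\gamma_1\supsetneq\gamma_2\supsetneq\cdots\supsetneq\gamma_{n+1}=\{1\}$ with characteristic terms, elementary abelian factors $\gamma_i/\gamma_{i+1}$, and satisfying $[\gamma_i,\gamma_j]\leqslant\gamma_{i+j}$ and $\gamma_i^p\leqslant\gamma_{i+1}$. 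Let $S$ denote the \emph{stability group} of this series, i.e. the set of $\alpha\in\Aut(H)$ with $g^{-1}\alpha(g)\in\gamma_{i+1}$ for every $g\in\gamma_i$ and every $i$. Since $S$ is a subgroup of $\Aut(H)$, it suffices to prove (i) $\Aut_p(H)\subseteq S$ and (ii) $S$ is a finite $p$-group.

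For (i), taking $i=1$ shows $S\subseteq\Aut_p(H)$ trivially, so the point is the reverse inclusion. Fix $\alpha\in\Aut_p(H)$. I would argue by induction on $i$ that $g^{-1}\alpha(g)\in\gamma_{i+1}$ for all $g\in\gamma_i$, the case $i=1$ being the definition of $\Aut_p(H)$. For the inductive step it is enough to treat a generating set of $\gamma_i$, namely the commutators $[x,h]$ with $x\in\gamma_{i-1}$, $h\in H$, and the $p$-th powers $x^p$ with $x\in\gamma_{i-1}$. Writing $\alpha(x)=xu$ and $\alpha(h)=hv$ with $u\in\gamma_i$ (inductive hypothesis) and $v\in\gamma_2$ (definition of $\Aut_p(H)$), one expands $\alpha([x,h])=[xu,hv]$ via the standard commutator identities and the inclusions $[\gamma_i,H]\leqslant\gamma_{i+1}$, $[\gamma_{i-1},\gamma_2]\leqslant\gamma_{i+1}$, $[\gamma_i,\gamma_2]\leqslant\gamma_{i+1}$ to get $\alpha([x,h])\equiv[x,h]\pmod{\gamma_{i+1}}$; and one expands $\alpha(x^p)=(xu)^p$ by the Hall--Petrescu collection formula, so that $(xu)^p\equiv x^pu^p\pmod{\gamma_{i+1}}$ with the collected commutators lying in $\gamma_{i+1}$, and then uses $u^p\in\gamma_i^p\leqslant\gamma_{i+1}$ to obtain $\alpha(x^p)\equiv x^p\pmod{\gamma_{i+1}}$. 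Hence $\alpha\in S$.

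For (ii), this is exactly the classical stability-group theorem cited in the lemma, but a short self-contained argument runs as follows. Filter $S$ by $S_j:=\{\alpha\in\Aut(H)\mid g^{-1}\alpha(g)\in\gamma_{i+j}$ for all $g\in\gamma_i$ and all $i\}$, so $S=S_1\supseteq S_2\supseteq\cdots$ and $S_n=\{1\}$ because $\gamma_{n+1}=\{1\}$. For each $j$ the assignment $\alpha\mapsto\big(g\gamma_{i+1}\mapsto g^{-1}\alpha(g)\,\gamma_{i+j+1}\big)_i$ defines a homomorphism $S_j\to\prod_i\mathrm{Hom}(\gamma_i/\gamma_{i+1},\gamma_{i+j}/\gamma_{i+j+1})$ with kernel $S_{j+1}$; the verification that it is well defined and multiplicative uses $[\gamma_i,\gamma_{i+j}]\leqslant\gamma_{2i+j}\leqslant\gamma_{i+j+1}$ together with the fact that automorphisms in $S_j$ act trivially modulo $\gamma_{i+j+1}$ on $\gamma_{i+j}$. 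The target is a finite abelian $p$-group, so every quotient $S_j/S_{j+1}$ is, and therefore $S$ is a finite $p$-group. Combining with (i), $\Aut_p(H)$ is a finite $p$-group.

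I expect the main obstacle to be step (i), and more precisely the contribution of the $p$-th powers to it: the commutator bookkeeping is routine, but controlling the error terms in the Hall--Petrescu expansion of $(xu)^p$ and checking that each of them lands in $\gamma_{i+1}$ is the delicate part, and it is exactly where the filtration inequalities $[\gamma_a,\gamma_b]\leqslant\gamma_{a+b}$ and $\gamma_a^p\leqslant\gamma_{a+1}$ for the lower exponent-$p$ central series are needed.
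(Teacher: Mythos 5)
Your proof is correct. The paper does not supply its own argument for this lemma but simply cites Robinson \cite[5.3.2, 5.3.3]{Robinson}; your write-up is the standard proof behind that citation, namely P.~Hall's stability-group theorem applied to the lower exponent-$p$ central series, combined with the observation (via the Burnside basis theorem and your inductive step (i)) that $\Aut_p(H)$ coincides with the stability group of that series.
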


The next statement was originally proved by L. Paris in \cite[Thm 2.4]{Paris}. We   present an elementary proof based on Lemma \ref{lem:Hall}.
\begin{lem}\label{lem:aut_p-rpf} Let $H$ be a finitely generated residually $p$-finite group, for some prime $p$. Then $\Aut_p(H)$ is residually $p$-finite, hence $\Aut(H)$  is virtually residually $p$-finite.
\end{lem}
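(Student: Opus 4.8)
The plan is to separate an arbitrary nontrivial automorphism $\alpha \in \Aut_p(H)$ from the identity using a finite $p$-group quotient of $\Aut_p(H)$. Since $H$ is finitely generated and $\alpha \neq \mathrm{id}$, there is some $h \in H$ with $h^{-1}\alpha(h) \neq 1$. Using residual $p$-finiteness of $H$, I would choose a normal subgroup $M \lhd H$ of $p$-power index with $h^{-1}\alpha(h) \notin M$; by Remark \ref{car} (applied in the finitely generated group $H$) I may replace $M$ by a \emph{characteristic} subgroup $N \lhd H$ of $p$-power index contained in $M$, still satisfying $h^{-1}\alpha(h) \notin N$. Because $N$ is characteristic, there is an induced homomorphism $\theta_N : \Aut(H) \to \Aut(H/N)$, and $\theta_N(\alpha) \neq \mathrm{id}$ since $\alpha$ does not act trivially on $h N \in H/N$. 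So it suffices to arrange that $\theta_N$ carries $\Aut_p(H)$ into a finite $p$-group.

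The key point is that $\theta_N$ maps $\Aut_p(H)$ into $\Aut_p(H/N)$: indeed $\Aut_p$ is defined by trivial action on the mod-$p$ homology $H/K_p$, the verbal subgroup $K_p = [H,H]H^p$ is characteristic, and the image of $K_p$ in $H/N$ is exactly $K_p(H/N)$ (the quotient map is surjective and respects commutators and $p$-th powers), so an automorphism of $H$ acting trivially on $H/K_p$ descends to one acting trivially on $(H/N)/K_p(H/N)$. Now $H/N$ is a finite $p$-group, so by P.\ Hall's Lemma \ref{lem:Hall} the group $\Aut_p(H/N)$ is a finite $p$-group. Composing, $\theta_N|_{\Aut_p(H)} : \Aut_p(H) \to \Aut_p(H/N)$ is a homomorphism to a finite $p$-group that does not kill $\alpha$. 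As $\alpha$ was an arbitrary nontrivial element of $\Aut_p(H)$, this shows $\Aut_p(H)$ is residually $p$-finite.

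The final sentence of the statement is immediate: since $H$ is finitely generated, $K_p$ has finite index in $H$, hence $\Aut_p(H)$ has finite index in $\Aut(H)$, so $\Aut(H)$ is virtually residually $p$-finite. I do not expect a serious obstacle here; the only point requiring a little care is verifying that $\theta_N$ really does land in $\Aut_p(H/N)$ rather than merely in $\Aut(H/N)$, i.e.\ checking the compatibility of the verbal subgroup $K_p$ with the characteristic quotient $H/N$, which is a routine functoriality check. One should also note explicitly that a characteristic finite-index subgroup of $p$-power index exists inside any given $p$-power-index normal subgroup, which is exactly Remark \ref{car}.
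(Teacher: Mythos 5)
Your proof is correct and follows essentially the same route as the paper's: pick a witness $h$ with $\alpha(h)\neq h$, use residual $p$-finiteness and Remark \ref{car} to find a characteristic normal subgroup of $p$-power index avoiding $h^{-1}\alpha(h)$, observe that the induced map on automorphisms carries $\Aut_p(H)$ into $\Aut_p$ of the finite $p$-quotient, and invoke P.~Hall's Lemma~\ref{lem:Hall}. The only cosmetic difference is that the paper packages the quotient as an epimorphism $\psi\colon H\to K$ with characteristic kernel rather than explicitly naming the characteristic subgroup $N$.
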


\begin{proof} Consider any non-trivial automorphism $\alpha \in \Aut_p(H)$. Then there is $h_0 \in H$ such that $\alpha(h_0) \neq h_0$. Since $H$ is residually $p$-finite, there exist a finite $p$-group $K$ and
an epimorphism $\psi: H \to K$ with $\psi(\alpha(h_0)) \neq \psi(h_0)$ in $K$.
As explained in Remark \ref{car}, one may assume that $\ker \psi$ is a characteristic subgroup of $H$.
This implies that $\psi$ naturally induces a homomorphism $\varphi: \Aut(H) \to \Aut(K)$.
Clearly, $\varphi(\Aut_p(H)) \subseteq \Aut_p(K)$, so the restriction
$\varphi'$ of $\varphi$ to $\Aut_p(H)$ gives a homomorphism from $\Aut_p(H)$ to $\Aut_p(K)$, where the latter is a finite $p$-group by Lemma \ref{lem:Hall}. It remains to observe that
$\varphi'(\alpha)$ is non-trivial, because $\varphi'(\alpha)(\psi(h_0))=\psi(\alpha(h_0))\neq \psi(h_0)$ by construction.
\end{proof}

\subsection{Toral  relatively hyperbolic groups}

  In this section we will prove Theorem  \ref{resp}. The method is similar to the one used in Section~\ref{hyp}.

Given a group $H$ with a fixed family of peripheral subgroups $C_1,\dots,C_s$, $s \ge 1$, we can define $\Aut^\partial(H)$, ${PMCG}^\partial(H)$ and ${PMCG}(H)$ as in Subsection \ref{relaut}.
For any prime $p$, let $\Aut^\partial_p (H)\leqslant \Aut^\partial(H)$ consist only of those t-uples $(\alpha;a_1,\dots,a_s)$ for which $\alpha \in \Aut_p(H)$. In other words $\Aut_p^\partial(H)$ is the full preimage
of $\Aut_p(H)$ under the natural projection $\Aut^\partial(H) \to \Aut(H)$. We also define $PMCG_p^\partial(H)$ as the image of $\Aut_p^\partial(H)$ in ${PMCG}^\partial(H)$, and  $PMCG_p(H)$ will denote its
image in $PMCG(H)\leqslant \Out(H)$.

\begin{rem}\label{rem:fg->pmcg_p-fi}
If $H$ is finitely generated, then $PMCG_p^\partial(H)$ has finite index in ${PMCG}^\partial(H)$.
\end{rem}

\begin{lem}\label{lem:fg+resp->pmsg_p-resp}
If $H$ is a finite $p$-group, then so is $PMCG_p^\partial(H)$.
 If $H$ is a finitely generated residually $p$-finite group, then $PMCG_p^\partial(H)$ is residually $p$-finite.
\end{lem}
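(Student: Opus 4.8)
The plan is to mimic the proof of Lemma \ref{rfd} (and of Lemma \ref{lem:aut_p-rpf}), replacing "residually finite" by "residually $p$-finite" throughout, and using the Hall-type input Lemma \ref{lem:Hall}. First I would dispose of the finite case: if $H$ is a finite $p$-group, then $\Aut_p(H)$ is a finite $p$-group by Lemma \ref{lem:Hall}, and $\Aut^\partial_p(H)$ is a subgroup of $H^s\rtimes \Aut_p(H)$ via the injection of Remark \ref{rem:emb_of_aut}; the semidirect product of the finite $p$-group $H^s$ with the finite $p$-group $\Aut_p(H)$ is a finite $p$-group, hence so is its subgroup $\Aut^\partial_p(H)$, and therefore so is its quotient image $PMCG^\partial_p(H)$.

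For the main statement, let $H$ be finitely generated and residually $p$-finite, and take a non-trivial $\Phi\in PMCG^\partial_p(H)$. Exactly as in Lemma \ref{rfd}, pick the representative $(\alpha;1,a_2,\dots,a_s)$ of $\Phi$ with $a_1=1$, where $\alpha\in\Aut_p(H)$. Either $\alpha(h)\ne h$ for some $h\in H$, or $a_i\ne1$ for some $i\ge 2$; in the first case choose, using residual $p$-finiteness of $H$ together with Remark \ref{car}, a characteristic subgroup $N\lhd H$ of $p$-power index with $h\m\alpha(h)\notin N$, and in the second case one with $a_i\notin N$. Since $N$ is characteristic, $K:=H/N$ is a finite $p$-group, the natural maps $\Aut(H)\to\Aut(K)$, $\Aut^\partial(H)\to\Aut^\partial(K)$ and $PMCG^\partial(H)\to PMCG^\partial(K)$ are defined (the images $C_1,\dots,C_s$ of the peripheral subgroups in $K$ being used to define the target), and $\alpha\in\Aut_p(H)$ maps into $\Aut_p(K)$, so $\Phi$ maps into $PMCG^\partial_p(K)$. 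By the finite case just proved, $PMCG^\partial_p(K)$ is a finite $p$-group, and by the choice of $N$ the image of $\Phi$ in it is non-trivial. As $\Phi$ was arbitrary, $PMCG^\partial_p(H)$ is residually $p$-finite.

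I do not expect a serious obstacle here; the one point requiring a little care is checking that the homomorphism $PMCG^\partial(H)\to PMCG^\partial(H/N)$ restricts to $PMCG^\partial_p(H)\to PMCG^\partial_p(H/N)$, i.e. that $\Aut_p$ is respected — this is immediate because $K_p$ is characteristic in $H$ and maps onto $K_p(H/N)$, so an automorphism trivial on $H/K_p$ induces one trivial on $(H/N)/K_p(H/N)$ — and the observation that the injection of Remark \ref{rem:emb_of_aut} carries $\Aut^\partial_p(H)$ into $H^s\rtimes\Aut_p(H)$ (it does, since that injection is the identity on the $\Aut(H)$-coordinate). Alternatively, one could phrase the whole argument through Remark \ref{rem:emb_of_aut} and Lemma \ref{lem:resp-sbgp}: $PMCG^\partial_p(H)$ embeds in $H^{s-1}\rtimes\Aut_p(H)$, which contains $H^{s-1}\rtimes \Aut_p(H)$ with $\Aut_p(H)$ residually $p$-finite by Lemma \ref{lem:aut_p-rpf} and $H^{s-1}$ residually $p$-finite, but since residual $p$-finiteness is not stable under semidirect products this route needs the finite-quotient argument above anyway, so I would stick with the direct proof.
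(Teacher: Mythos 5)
Your proof is correct and follows essentially the same route as the paper: for the finite case, use the embedding of Remark \ref{rem:emb_of_aut} into a semidirect product with $\Aut_p(H)$ and apply Lemma \ref{lem:Hall}, and for the residually $p$-finite case, run the argument of Lemma \ref{rfd} with a characteristic $N\lhd H$ of $p$-power index and reduce to the finite $p$-group case just proved. The only cosmetic difference is that you pass through $\Aut_p^\partial(H)\hookrightarrow H^s\rtimes\Aut_p(H)$ and then quotient, whereas the paper applies the $PMCG_p^\partial(H)\hookrightarrow H^{s-1}\rtimes\Aut_p(H)$ embedding directly.
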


\begin{proof}
$PMCG_p^\partial(H)$ embeds into $H^{s-1}\rtimes\Aut_p(H)$ (see Remark \ref{rem:emb_of_aut}); this group is a finite $p$-group by Lemma \ref{lem:Hall}. For the second assertion, argue as in Lemma \ref{rfd}, mapping
$PMCG_p^\partial(H)$ to $PMCG_p^\partial(H/N)$ with $H/N$ a finite $p$-group.
\end{proof}

  Our next goal is Lemma \ref{lem:omcg2} below, which is an analogue of Lemma \ref{omcg}. We need to prove two auxiliary statements first.

\begin{lem}\label{lem:surf_gps-rp} Fundamental groups of closed hyperbolic surfaces are residually $p$-finite for all primes $p$.
\end{lem}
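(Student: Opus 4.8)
The plan is to reduce everything to closed \emph{orientable} hyperbolic surfaces, that is, to $\Sigma_g$ with $g\ge 2$, using the classical fact (going back to Baumslag) that $S_g:=\pi_1(\Sigma_g)$ is fully residually free. Granting this, fix $1\ne x\in S_g$: there is a homomorphism $\varphi\colon S_g\to F$ onto a free group $F$ with $\varphi(x)\ne 1$, and since free groups are residually $p$-finite for every prime $p$ (as recalled above), some homomorphism of $F$ onto a finite $p$-group does not kill $\varphi(x)$. Composing, $x$ survives in a finite $p$-quotient of $S_g$; as $x$ was arbitrary, $S_g$ is residually $p$-finite for every $p$. (Alternatively, one may invoke Labute's theorem that $S_g$ is residually torsion-free nilpotent, together with the fact that finitely generated torsion-free nilpotent groups are residually $p$-finite for every $p$; or an embedding of $S_g$ into a right-angled Artin group.)

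For a closed \emph{non-orientable} surface of negative Euler characteristic, that is, $N_k$ with $k\ge 3$, I would pass to finite covers of $p$-power index and use Lemma~\ref{lem:resp-sbgp}. If $k\ge 4$, then $\pi_1(N_k)$ is itself fully residually free (by the classification of closed surface groups that are limit groups), and the first paragraph applies verbatim. For $k=3$: if $p=2$, the orientation double cover is $\Sigma_2$, so $\pi_1(N_3)$ has a normal subgroup of index $2$ isomorphic to $S_2$, which is residually $2$-finite by the above, hence $\pi_1(N_3)$ is residually $2$-finite by Lemma~\ref{lem:resp-sbgp}; if $p$ is odd, then $H_1(N_3)$ surjects onto $\mathbb{Z}$, hence onto $\mathbb{Z}/p$, yielding a normal subgroup $K\lhd\pi_1(N_3)$ of index $p$ which is the fundamental group of a $p$-fold cover of $N_3$, i.e. of a closed surface of Euler characteristic $-p$; this surface is non-orientable (its Euler characteristic being odd), so $K\cong\pi_1(N_{p+2})$ with $p+2\ge 5\ge 4$, whence $K$ is fully residually free, hence residually $p$-finite, and Lemma~\ref{lem:resp-sbgp} gives the same conclusion for $\pi_1(N_3)$.

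I expect the exceptional non-orientable surface $N_3$ to be the only genuine obstacle: $\pi_1(N_3)$ is not even residually free (it has a presentation $\langle a,b,c\mid c^{2}[a,b]\rangle$, and a nontrivial commutator is not a proper square in a free group, so every homomorphism to a free group must kill $c$), which is why one is forced to pass through $p$-power-index covers; one must then check that the covers used are indeed normal and of $p$-power index, so that Lemma~\ref{lem:resp-sbgp} applies. Everything else is a direct citation (the orientable case) together with elementary Euler-characteristic bookkeeping, and the whole argument could equally be organized uniformly around the known classification of closed surface groups that are limit groups, treating $N_3$ by the covering trick above.
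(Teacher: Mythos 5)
Your proof is correct and follows essentially the same route as the paper: use that all closed hyperbolic surface groups except $\pi_1(N_3)$ are residually free (hence residually $p$-finite via free groups), and handle $N_3$ by passing to a degree-$p$ normal cover (which exists since $H_1(N_3)\cong \Z^2\times\Z/2\Z$) and invoking Lemma~\ref{lem:resp-sbgp}. The paper treats the cover uniformly without splitting into $p=2$ and $p$ odd, but the extra bookkeeping you do is harmless.
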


\begin{proof} Let $\Sigma$ be a closed hyperbolic surface. Then $\pi_1(\Sigma)$ is residually free, except for the case when $\Sigma=\Sigma_{-1}$ is the closed non-orientable surface  of genus $3$ (and Euler characteristic $-1$), see \cite{GBaumslag-62, BBaumslag-67}. Since free groups are residually $p$-finite for every prime $p$ (cf.\  \cite[6.1.9]{Robinson}), the lemma
 follows for all $\Sigma \neq \Sigma_{-1}$.
On the other hand, for any prime $p$,  there is    a normal cover of degree $p$ of $\Sigma_{-1}$ (because the abelianization of $\pi_1(\Sigma_{-1})$ is isomorphic to $\mathbb{Z}^2 \times \Z/2\Z$). This cover
is a surface of higher genus,   so its fundamental group is residually $p$-finite by the previous argument. Hence $\pi_1(\Sigma_{-1})$ is residually $p$-finite by Lemma \ref{lem:resp-sbgp}.
\end{proof}

\begin{lem}\label{lem:O_n-resp} Let $p$ be a prime, and $n$ be a power of $p$. Let $\Sigma_n$ be a closed hyperbolic 2-orbifold   whose singularities are cone points of order $n$.
Then $O_n:=\pi_1(\Sigma_n)$ is residually $p$-finite.
\end{lem}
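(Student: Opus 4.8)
The plan is to deduce this from the closed-surface case (Lemma~\ref{lem:surf_gps-rp}) by means of Lemma~\ref{lem:resp-sbgp}: it is enough to exhibit a torsion-free normal subgroup $N\lhd O_n$ of $p$-power index. Indeed, such an $N$ is then the fundamental group of a finite cover of $\Sigma_n$ which, being torsion-free, is a closed hyperbolic surface, so $N$ is residually $p$-finite by Lemma~\ref{lem:surf_gps-rp}, whence $O_n$ is residually $p$-finite by Lemma~\ref{lem:resp-sbgp}. Since every finite subgroup of the $2$-orbifold group $O_n$ is conjugate into one of the cyclic subgroups $\langle x_1\rangle,\dots,\langle x_m\rangle\cong\Z/n\Z$ generated by the cone points, it suffices in turn to find an epimorphism $\varphi$ from $O_n$ onto a finite $p$-group $Q$ whose restriction to each $\langle x_i\rangle$ is injective, and then put $N=\ker\varphi$. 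If $\Sigma_n$ has no cone points it is a closed hyperbolic surface and we are done immediately, so assume $m\ge 1$.

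Suppose first that $m\ge 2$. Take the standard presentation $O_n=\langle\, S \mid x_i^{\,n}=1\ (1\le i\le m),\ w\,x_1\cdots x_m=1 \,\rangle$, where $S$ is the generating set and $w$ is a product of commutators of generators in the orientable case, or of squares of generators in the non-orientable case. Let $A=\bigoplus_{s\in S}\Z/n\Z$ be the free $\Z/n\Z$-module on $S$, and put
\[
Q=A/\langle\,\overline{w\,x_1\cdots x_m}\,\rangle,\qquad \overline{w\,x_1\cdots x_m}=\overline{w}+\overline{x_1}+\dots+\overline{x_m},
\]
where the bar denotes the image of a word in $A$ and $\overline{w}$ is supported on the surface generators $S\setminus\{x_1,\dots,x_m\}$ (so $\overline{w}=0$ in the orientable case). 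This $Q$ is a finite abelian $p$-group, and since all defining relations of $O_n$ hold in $Q$ there is an epimorphism $\varphi\colon O_n\to Q$. Because $A$ is free over $\Z/n\Z$ and $m\ge 2$, comparing, for $i'\ne i$, the $\overline{x_{i'}}$-coordinates of $t\,\overline{x_i}=s\,\overline{w\,x_1\cdots x_m}$ forces $s=0$ and then $t=0$; thus $\langle\overline{x_i}\rangle\cap\langle\,\overline{w\,x_1\cdots x_m}\,\rangle=\{0\}$ for every $i$, so $\varphi$ is injective on each $\langle x_i\rangle$, as required.

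Suppose now $m=1$. The argument above breaks down — e.g.\ if $\Sigma_n$ is a torus with one cone point then $x_1$ is a commutator, so it dies in every abelian quotient of $O_n$ and no abelian $p$-group quotient can record the full order of $x_1$. To circumvent this, first pass to a cover. The cone generator $x_1$ equals $w^{-1}$ with $w$ a product of commutators (orientable case) or of squares (non-orientable case), so one can choose an epimorphism $O_n\to\Z/p\Z$ whose kernel contains $x_1$ (every such epimorphism has this property in the orientable case). The associated regular degree-$p$ cover $\Sigma_n'\to\Sigma_n$ is a closed hyperbolic $2$-orbifold in which the single cone point of $\Sigma_n$ lifts to $p$ cone points, each again of order $n$; hence $\Sigma_n'$ has $p\ge 2$ cone points of order $n$, and $\pi_1(\Sigma_n')$ is residually $p$-finite by the case already treated. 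Since $\pi_1(\Sigma_n')\lhd O_n$ has index $p$, Lemma~\ref{lem:resp-sbgp} gives that $O_n$ is residually $p$-finite.

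The heart of the argument is the construction in the case $m\ge 2$: one must ``$p$-unwrap'' all the cone points simultaneously inside a single finite $p$-quotient, and the reason this succeeds is that the long relator involves the torsion generators only through the product $x_1\cdots x_m$, so no individual $\overline{x_i}$ is killed. The failure of this for $m=1$ is exactly what forces the preliminary cover, and verifying that this reduction (together with the existence of the required surjection onto $\Z/p\Z$, which is where one should be slightly careful in the non-orientable case) is the only delicate point.
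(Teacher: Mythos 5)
Your proof is correct, and it takes a genuinely different (though similar in spirit) route from the paper's. The paper removes a small disc neighborhood of each cone point to get a compact surface $\Sigma$ with boundary, so that $\pi_1(\Sigma)$ is free; it then produces a finite $p$-quotient of $\pi_1(\Sigma)$ in which every boundary curve has image of order exactly $n$ — via the mod-$n$ abelianization when there are at least two boundary components, and by citing a lemma of Stebe (see also Martino) when there is only one — and notes that the corresponding cover of $\Sigma$ glues up to a closed surface covering $\Sigma_n$ of $p$-power degree. You instead work directly with the closed orbifold group $O_n$ and construct a finite abelian $p$-quotient injective on every cone stabilizer, observing that this is enough because every torsion element of $O_n$ is conjugate into some $\langle x_i\rangle$; your $Q$ is precisely $H_1(O_n,\Z/n\Z)$, so at bottom both proofs use the mod-$n$ abelianization, applied to a free group in the paper and to $O_n$ itself here. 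Your reduction of the single-cone-point case ($m=1$) via a preliminary degree-$p$ cover is a nice elementary replacement for the citation of Stebe's lemma, and it is self-contained. The one detail you flag but do not complete — the existence of an epimorphism $O_n\to\Z/p\Z$ killing $x_1$ in the non-orientable case — does go through: with the presentation $\langle y_1,\dots,y_g,x_1\mid x_1^n,\,y_1^2\cdots y_g^2x_1\rangle$ one computes $H_1(O_n)\cong\Z^{g-1}\oplus\Z/2n\Z$ with the image of $x_1$ lying in the finite summand, and since hyperbolicity of $\Sigma_n$ forces $g\ge 2$ (as $\chi(\Sigma_n)=1-g+1/n<0$), the free $\Z^{g-1}$ summand is nontrivial and any nonzero map from it to $\Z/p\Z$ (extended by $0$ on the torsion part) does the job. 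It would be worth spelling this out, since it is the only place where the reduction could conceivably fail.
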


\begin{proof}

Let $\Sigma$ be a compact surface obtained by removing a neighborhood of each conical point.
We may map $\pi_1(\Sigma)$ to a finite $p$-group so that the fundamental group of every boundary component has image of order exactly $n$:
if $\Sigma$ has only one boundary component, this
follows from \cite[Lemma 1]{Ste} (see also   \cite[Lemma 4.1]{Martino}); if there are more, the fundamental group of each boundary component is a free generator of  $\pi_1(\Sigma)$, and we
map $\pi_1(\Sigma)$ to $H_1(\pi_1(\Sigma),\Z/n\Z)$, its abelianization mod $n$.

The corresponding normal covering of $\Sigma$ extends to a covering of $\Sigma_n$ by a closed surface,
because its restriction to every component of $\partial \Sigma$ has degree exactly $n$.
The fundamental group of this surface  is residually $p$-finite by Lemma \ref {lem:surf_gps-rp}. Its index in  $O_n$ is a power of $p$, so $O_n$ is  residually $p$-finite by Lemma \ref{lem:resp-sbgp}.
\end{proof}

Now suppose, as in Subsection \ref{surf},   that $H$ is the fundamental group of a compact  surface $\Sigma$ with negative Euler characteristic and $s\ge 1$ boundary components.
Let $C_1,\dots, C_s$ be the fundamental groups of these components, considered as subgroups of $H$.
Let $\calt_H \leqslant PMCG^\partial(H)$ be the corresponding group of twists. Note that $\calt_H \subseteq PMCG_p^\partial(H)$ for any prime $p$.
We have the following analogue of Lemma \ref{omcg}:

\begin{lem}\label{lem:omcg2} Let $p$ be a prime. Then the quotient $PMCG_p^\partial(H) /n\calt_H$ is residually $p$-finite for every sufficiently large power $n$ of $p$.
\end{lem}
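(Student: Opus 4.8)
The plan is to follow the proof of Lemma~\ref{omcg} line by line, systematically replacing ``residually finite'' by ``residually $p$-finite'' and using Lemma~\ref{lem:O_n-resp} where that proof used residual finiteness of a non-elementary Fuchsian group. Fix a power $n$ of $p$ large enough that $\chi(\Sigma)+\frac{s}{n}<0$; then the closed $2$-orbifold $\Sigma_n$ obtained from $\Sigma$ by replacing each boundary component by a cone point of order $n$ is hyperbolic (see \cite[Thm.\ 13.3.6]{Th}), so $O_n:=\pi_1(\Sigma_n)=H/\llangle c_1^n,\dots,c_s^n\rrangle^H$ is a non-elementary subgroup of $\mathrm{Isom}(\Hy^2)$. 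Hence $O_n$ has trivial center, the image of each $C_i$ in $O_n$ equals its own centralizer, and $O_n$ is residually $p$-finite by Lemma~\ref{lem:O_n-resp} (here we use that $n$ is a power of $p$). The subgroup $\llangle c_1^n,\dots,c_s^n\rrangle^H$ is invariant under $PMCG(H)$, and any $\alpha\in\Aut_p(H)$ induces an element of $\Aut_p(O_n)$ because $O_n/([O_n,O_n]O_n^p)$ is a quotient of $H/([H,H]H^p)$; so, just as in Lemma~\ref{omcg}, there is a commutative diagram of central extensions from $\{1\}\to\calt_H\to PMCG_p^\partial(H)\to PMCG_p(H)\to\{1\}$ to $\{1\}\to\calt_{O_n}\to PMCG_p^\partial(O_n)\to PMCG_p(O_n)\to\{1\}$, in which $\calt_{O_n}\cong(\Z/n\Z)^s$ is generated by the twists around the cone points and the left-hand vertical map $\calt_H\cong\Z^s\to\calt_{O_n}\cong(\Z/n\Z)^s$ is reduction modulo $n$; its kernel is exactly $n\calt_H$.

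Next I would chase this diagram. The map $PMCG_p^\partial(H)\to PMCG_p^\partial(O_n)$ kills $n\calt_H$, and the map $PMCG_p^\partial(H)\to PMCG_p(H)$ kills $\calt_H$; by commutativity and injectivity of $\calt_{O_n}\hookrightarrow PMCG_p^\partial(O_n)$, the intersection of the kernels of these two maps meets $\calt_H$ exactly in $n\calt_H$, hence equals $n\calt_H$. Therefore every non-trivial element $\Phi$ of $PMCG_p^\partial(H)/n\calt_H$ has non-trivial image in $PMCG_p^\partial(O_n)$ or in $PMCG_p(H)$, and composing a separating map to a finite $p$-group with the relevant homomorphism out of $PMCG_p^\partial(H)$ yields a homomorphism $PMCG_p^\partial(H)/n\calt_H\to(\text{finite }p\text{-group})$ not killing $\Phi$ (both composites kill $n\calt_H$). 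Now $PMCG_p^\partial(O_n)$ is residually $p$-finite by Lemma~\ref{lem:fg+resp->pmsg_p-resp}, since $O_n$ is finitely generated and residually $p$-finite. As for $PMCG_p(H)$: since $H$ is a finitely generated free group, the identification of $PMCG(H)$ with the mapping class group of $\Sigma$ (Subsection~\ref{surf}) identifies $PMCG_p(H)$ with the level-$p$ congruence subgroup of $\mathrm{Mod}(\Sigma)$ (the mapping classes acting trivially on $H_1(\Sigma;\Z/p\Z)$); by Paris's theorem \cite{Paris}, $PMCG_p(H)$ — or at least a subgroup of $p$-power index in it — is residually $p$-finite, whence $PMCG_p(H)$ itself is residually $p$-finite by Lemma~\ref{lem:resp-sbgp}. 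Putting this together, $PMCG_p^\partial(H)/n\calt_H$ is residually $p$-finite; as this works for every sufficiently large power $n$ of $p$, the lemma follows.

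The main obstacle — the point at which this argument goes genuinely beyond the residual-finiteness statement of Lemma~\ref{omcg} — is the residual $p$-finiteness of $PMCG_p(H)$ for $H$ free. In Lemma~\ref{omcg} the corresponding input (residual finiteness of $PMCG(H)$) was immediate from Grossman's theorem that $\Out$ of a finitely generated free group is residually finite; here, since $\Out$ of a free group is only known to be \emph{virtually} residually $p$-finite (Lubotzky \cite{Lub}), one must use that $PMCG(H)$ is an honest surface mapping class group and invoke Paris's considerably more delicate result together with the elementary observation (Lemma~\ref{lem:resp-sbgp}) that residual $p$-finiteness passes up across a normal subgroup of $p$-power index. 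A secondary point to verify carefully, exactly as in Lemma~\ref{omcg}, is that capping off with cone points of order $n$ produces $\calt_{O_n}\cong(\Z/n\Z)^s$ with the vertical twist map reducing modulo $n$, so that its kernel is precisely $n\calt_H$ and no larger — this uses that $\Sigma_n$ is hyperbolic, so that $O_n$ has trivial center and each $C_i$ maps to a self-centralizing subgroup.
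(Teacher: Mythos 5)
Your argument has exactly the same architecture as the paper's proof: pass to $O_n=\pi_1(\Sigma_n)$ for $n$ a sufficiently large power of $p$, verify that $\ker\theta=n\calt_H$, and reduce residual $p$-finiteness of $PMCG_p^\partial(H)/n\calt_H$ to that of $PMCG_p^\partial(O_n)$ (via Lemmas~\ref{lem:O_n-resp} and \ref{lem:fg+resp->pmsg_p-resp}) and of $PMCG_p(H)$. The diagram chase and the observation that $\Aut_p(H)$ descends to $\Aut_p(O_n)$ are both fine.

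Where you diverge is in the treatment of $PMCG_p(H)$, and here you make things harder than they need to be because of a misreading of what is known. You argue that, since Lubotzky only gives \emph{virtual} residual $p$-finiteness of $\Out(F)$ for $F$ free, one must re-interpret $PMCG(H)$ as a surface mapping class group and invoke Paris's result on MCGs, then bootstrap with Lemma~\ref{lem:resp-sbgp}. But Paris's Theorem~1.4 (the result the paper cites) already states that $\Out_p(F)$ itself is residually $p$-finite when $F$ is a finitely generated free group, not merely that $\Out(F)$ has \emph{some} residually $p$-finite subgroup of finite index; it pins down the canonical level-$p$ subgroup. So the paper simply notes $PMCG_p(H)\leqslant\Out_p(H)$ and is done.

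Your workaround is not a genuine gap, but as written it has two imprecisions worth flagging. First, $PMCG(H)$ is not $\mathrm{Mod}(\Sigma)$ but its pure subgroup (no permutation of boundary components), so $PMCG_p(H)$ is not literally ``the level-$p$ congruence subgroup of $\mathrm{Mod}(\Sigma)$''; one would need either to work with the pure MCG throughout or to argue that $PMCG_p(H)$ sits inside a residually $p$-finite congruence subgroup and use that subgroups of residually $p$-finite groups are residually $p$-finite. Second, the appeal to Lemma~\ref{lem:resp-sbgp} goes in the wrong direction for the situation you describe: that lemma passes residual $p$-finiteness \emph{up} from a normal subgroup of $p$-power index, whereas if Paris gives you a \emph{containing} residually $p$-finite group you only need the trivial fact that subgroups inherit the property. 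Both issues evaporate once one uses Paris's Theorem~1.4 for $\Out_p$ of a free group directly.
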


\begin{proof}
The   proof is   similar to that of Lemma \ref{omcg},  using $PMCG_p$ instead of $PMCG$.
The kernel of $\theta:\calt_H\to\calt_{O_n}$ is $n\calt_H$,
 and we need to know that $PMCG_p(H)$ and $PMCG_p^\partial(O_n)$ are residually $p$-finite.

Clearly  $PMCG_p(H) \leqslant \Out_p(H)$, which is residually $p$-finite by a result of L. Paris \cite[Thm.\ 1.4]{Paris} since $H$ is a free group.
  On the other hand, the group $PMCG_p^\partial(O_n)$ is residually $p$-finite by Lemmas \ref{lem:O_n-resp} and \ref{lem:fg+resp->pmsg_p-resp}.
\end{proof}

We also need to consider abelian groups.
Let $A$ be a free abelian group of finite rank with a chosen family of subgroups $C_1,\dots,C_s$. For any prime $p$, consider the corresponding groups $\Aut_p^\partial(A)$, $PMCG_p^\partial(A)$, and the normal subgroup
$\calt_A\lhd PMCG_p^\partial(A)$, defined as in Subsection \ref{relaut}. Note that $\calt_A$ is naturally isomorphic to the quotient of $A^s$ by its diagonal subgroup, hence to $A^{s-1}$.

\begin{lem}\label{lem:omcg3} Let $p$ be a prime. The quotient $PMCG_p^\partial(A) /n\calt_A$ is residually $p$-finite for every power $n$ of $p$.
\end{lem}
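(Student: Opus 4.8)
The plan is to follow the pattern of Lemma~\ref{lem:fg+resp->pmsg_p-resp}, using the embedding of Remark~\ref{rem:emb_of_aut}, and then to deal with the extra quotient by $n\calt_A$ directly. Since $A$ is abelian, the injection $PMCG^\partial(A)\to A^{s-1}\rtimes\Aut(A)$ of Remark~\ref{rem:emb_of_aut} restricts to an injection $PMCG_p^\partial(A)\hookrightarrow A^{s-1}\rtimes\Aut_p(A)$ (the second coordinate lies in $\Aut_p(A)$ by the definition of $PMCG_p^\partial$), and under it $\calt_A$ maps isomorphically onto $A^{s-1}\times\{1\}$, so $n\calt_A$ maps onto $nA^{s-1}\times\{1\}$. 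Since $nA^{s-1}$ is characteristic in $A^{s-1}$ it is normalised by $\Aut_p(A)$, hence normal in $A^{s-1}\rtimes\Aut_p(A)$; passing to the quotient yields an embedding
\[
PMCG_p^\partial(A)/n\calt_A\ \hookrightarrow\ E:=\bigl(A^{s-1}/nA^{s-1}\bigr)\rtimes\Aut_p(A).
\]
As residual $p$-finiteness is inherited by subgroups, it suffices to prove that $E$ is residually $p$-finite (if $n=1$ then $E$ embeds in $\Aut_p(A)$ and there is nothing to do).

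Write $F:=A^{s-1}/nA^{s-1}\cong(A/nA)^{s-1}$, a finite abelian $p$-group because $n$ is a power of $p$. The key observation is that the conjugation action of $\Aut_p(A)$ on $F$ has image a finite $p$-group. Indeed, this action is the diagonal action induced by the reduction homomorphism $\Aut_p(A)\to\Aut(A/nA)$, and, since $nA\subseteq pA$, every element of $\Aut_p(A)$ induces on $A/nA$ an automorphism congruent to the identity modulo $p(A/nA)$, i.e.\ an element of $\Aut_p(A/nA)$; by Hall's Lemma~\ref{lem:Hall} the latter is a finite $p$-group. Consequently the kernel $Q_0$ of the action $\Aut_p(A)\to\Aut(F)$ has $p$-power index in $\Aut_p(A)$.

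Now consider, inside $E=F\rtimes\Aut_p(A)$, the subgroup $F\times Q_0$ (a genuine direct product, since $Q_0$ acts trivially on $F$). It is normal in $E$: conjugation by $F$ stabilises it because $Q_0$ centralises $F$, and conjugation by $\Aut_p(A)$ stabilises it because $Q_0\lhd\Aut_p(A)$ and $F\lhd E$. Its index equals $|\Aut_p(A):Q_0|$, a power of $p$. Moreover $F\times Q_0$ is residually $p$-finite: $F$ is a finite $p$-group, $Q_0\leqslant\Aut_p(A)$ is residually $p$-finite by Lemma~\ref{lem:aut_p-rpf} applied to the finitely generated residually $p$-finite group $A$, and direct products of residually $p$-finite groups are residually $p$-finite. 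By Lemma~\ref{lem:resp-sbgp}, $E$ is residually $p$-finite, hence so is $PMCG_p^\partial(A)/n\calt_A$. The only point that needs genuine care is the second paragraph: residual $p$-finiteness does \emph{not} pass through extensions by finite $p$-groups in general, so the argument really relies on the fact that $\Aut_p(A)$ acts $p$-nilpotently on $A^{s-1}/nA^{s-1}$ — this is where $n\equiv0\pmod p$ and Hall's lemma are used.
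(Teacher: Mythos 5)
Your proof is correct, but it takes a genuinely different route from the paper's. The paper argues via the commutative diagram of short exact sequences relating $PMCG_p^\partial(A)$ to $PMCG_p^\partial(A/nA)$ and $PMCG_p(A)$: the map $\theta\colon\calt_A\to\calt_{A/nA}$ has kernel $n\calt_A$, so $PMCG_p^\partial(A)/n\calt_A$ embeds into the product $PMCG_p(A)\times PMCG_p^\partial(A/nA)$, and one then quotes Lemma~\ref{lem:aut_p-rpf} for the first factor and Lemma~\ref{lem:fg+resp->pmsg_p-resp} (Hall) for the second. This mirrors exactly the structure of the surface-group Lemma~\ref{lem:omcg2}, which is presumably why the authors chose it. Your argument instead embeds $PMCG_p^\partial(A)/n\calt_A$ directly into $(A^{s-1}/nA^{s-1})\rtimes\Aut_p(A)$ via Remark~\ref{rem:emb_of_aut}, and then analyses this semidirect product. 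This exploits the abelianity of $A$ in an essential way: $\calt_A$ maps onto $A^{s-1}\times\{1\}$ precisely because normalising the first coordinate to $1$ multiplies $\alpha$ by an inner automorphism, which is trivial when $A$ is abelian. (For the surface case the same normalisation produces a non-trivial inner automorphism in the $\Aut(H)$ coordinate, so $\calt_H$ does not land in $H^{s-1}\times\{1\}$, and your approach would not transfer to Lemma~\ref{lem:omcg2}.) Your route has the advantage of isolating the real mechanism explicitly, namely that $\Aut_p(A)$ acts on $(A/nA)^{s-1}$ through a finite $p$-group (because $nA\subseteq pA$ and Hall), so one can split off a normal $p$-power-index subgroup that is a genuine direct product and apply Lemma~\ref{lem:resp-sbgp}. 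The paper's diagram route is shorter once Lemma~\ref{lem:fg+resp->pmsg_p-resp} is in hand, and has the virtue of running parallel to the surface case, but it leaves the semidirect-product subtlety implicit. Both are valid; yours is the more self-contained of the two.
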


\begin{proof}
Consider the following commutative diagram of short exact sequences:
 $$
 \xymatrix{
\{1\}\ar[r]&\calt_A\ar[r]\ar[d]^\theta&PMCG_p^\partial(A)\ar[r]\ar[d]& PMCG_p (A)\ar[r]\ar[d]&\{1\}\\
 \{1\}\ar[r]& \calt _{A/nA}\ar[r]& PMCG_p^\partial(A/nA)\ar[r]& PMCG_p (A/nA)\ar[r]& \{1\}.
}
$$
The map
$\theta:\calt_A\to\calt_{A/nA}$ sends $A^{s-1}$ to $(A/nA)^{s-1}$, so its kernel is $n\calt_A$ and the proof is reduced to showing that $PMCG_p(A) $ and $PMCG_p^\partial(A/nA)$ are residually $p$-finite.
Now  $PMCG_p(A) \leqslant \Out_p(A)=\Aut_p(A)$ is residually $p$-finite by Lemma \ref{lem:aut_p-rpf}, and
$PMCG_p^\partial(A/nA)$ is a finite $p$-group  by  Lemma \ref{lem:fg+resp->pmsg_p-resp}.
\end{proof}

\begin{lem}[cf.\  Lemma \ref{alg}]\label{alg2}
  Consider a finite set $V$ and groups $P_v$, $v \in V$, with normal subgroups $T_v$ free abelian of finite ranks. Let $P=\prod_{v \in V} P_v$ and $  T=\prod_{v \in V} T_v \leqslant P$ be their direct products.

  Suppose that $p$ is a prime number and $Z \leqslant T$ is a subgroup such that $T/Z$ contains no $q$-torsion if $q\ne p$ is a prime.
If $P_v/nT_v$ is residually $p$-finite  for all  $v \in V$ and for every sufficiently large power $n$  of $p$, then $Z$ is closed in the pro-$p$ topology of $P$.
In particular, if $Z$ is normal in $P$, then $P/Z$ is residually $p$-finite.
 \end{lem}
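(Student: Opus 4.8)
The plan is to follow the proof of Lemma \ref{alg} almost verbatim, with ``$p$-power index'' systematically replacing ``finite index''. The one place where a genuinely new argument is needed is the reduction to subgroups of $p$-power index in $T$: whereas \emph{every} subgroup of a finitely generated free abelian group is closed in its profinite topology, this fails for the pro-$p$ topology (for instance $2\Z$ is not closed in the pro-$3$ topology of $\Z$), and it is exactly here that the hypothesis that $T/Z$ has no $q$-torsion for $q\ne p$ enters. So the conceptually new, but easy, point is this reduction, while the rest is a line-by-line translation.

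First I would reduce to the case where $T/Z$ is a finite $p$-group. Since $T$ is free abelian of finite rank, $T/Z$ is a finitely generated abelian group; the assumption that it has no $q$-torsion for primes $q\ne p$ forces $T/Z\cong\Z^r\oplus F$ with $F$ a finite abelian $p$-group. As $\Z$ and $F$ are residually $p$-finite and residual $p$-finiteness is stable under finite direct products, $T/Z$ is residually $p$-finite, so the trivial subgroup of $T/Z$ is an intersection of subgroups of $p$-power index. Pulling these back, $Z$ is the intersection of the subgroups $Z'$ with $Z\leqslant Z'\leqslant T$ and $T/Z'$ a finite $p$-group; an intersection of closed sets being closed, it suffices to prove that each such $Z'$ is closed in the pro-$p$ topology of $P$.

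So assume $T/Z$ is a finite $p$-group. Then $nT\leqslant Z$ whenever $n$ is a power of $p$ divisible by $\exp(T/Z)$, and by further enlarging $n$ I may assume $n$ is large enough that $P_v/nT_v$ is residually $p$-finite for every $v\in V$. Because $nT_v$ is characteristic in $T_v$, hence normal in $P_v$, residual $p$-finiteness of $P_v/nT_v$ says exactly that $nT_v$ is an intersection of $p$-power-index normal subgroups of $P_v$, i.e.\ $nT_v$ is closed in the pro-$p$ topology of $P_v$; by the usual direct-product argument $nT=\prod_{v\in V}nT_v$ is then closed in the pro-$p$ topology of $P$. Finally $T/nT$ is finite (as $n$ is finite and $\operatorname{rk}T<\infty$), so $[Z:nT]<\infty$ and $Z$ is a finite union of cosets of $nT$; translates of a closed set being closed, $Z$ is closed in $P$. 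With the reduction of the previous paragraph this proves $Z$ is closed in all cases.

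For the last sentence, suppose in addition $Z\lhd P$. Writing $Z=\bigcap_j U_j$ with each $U_j$ an open subgroup containing $Z$, choose inside each $U_j$ a normal subgroup $N_j\lhd P$ of $p$-power index; then $Z\leqslant ZN_j\leqslant U_j$, so $Z=\bigcap_j ZN_j$ exhibits $Z$ as an intersection of normal subgroups of $p$-power index, whence $P/Z$ is residually $p$-finite. I do not expect a serious obstacle here; the only thing to watch throughout is that intersections of $p$-power-index subgroups need not have $p$-power index unless the subgroups are normal (cf.\ Remark \ref{car}) — this is why the argument is phrased in terms of normal subgroups — and that the choice of ``$n$ sufficiently large'' must be made compatibly with the requirement $nT\leqslant Z$.
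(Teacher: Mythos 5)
Your proof is correct and follows essentially the same route as the paper: reduce to the case where $T/Z$ is a finite $p$-group (using, via the no-$q$-torsion hypothesis, that $T/Z$ is residually $p$-finite), then mimic Lemma \ref{alg} with ``sufficiently large $n$ a power of $p$'' in place of ``sufficiently large $n$.'' The paper states this in two sentences; you have simply spelled out the details, including the elementary final step from ``$Z$ closed and normal'' to ``$P/Z$ residually $p$-finite.''
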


 \begin{proof}
This is similar to the proof of Lemma \ref{alg}.  One first proves the result when $Z$ has $p$-power index in $T$. In the general case,
$T/Z$ being residually $p$-finite  guarantees that $Z$ is the intersection of (normal) subgroups of $p$-power index in $T$.
 \end{proof}

We are now ready to prove the main theorem of this section.

\begin{thm}\label{thm:virt_rp} If some finite index subgroup of $G$ is a   one-ended  toral relatively hyperbolic group,
then $\Out(G)$ is virtually residually $p$-finite for any prime $p$.
\end{thm}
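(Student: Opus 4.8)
The plan is to deduce Theorem~\ref{thm:virt_rp} from the case in which $G$ itself is a one-ended toral relatively hyperbolic group (Theorem~\ref{resp}), and to prove the latter by the pro-$p$ analogue of the argument in Section~\ref{hyp}. For the reduction: if $G_0\leqslant G$ is a finite-index subgroup that is one-ended toral relatively hyperbolic, then, replacing $G_0$ by $\bigcap_{g\in G}gG_0g^{-1}$, we may assume $G_0\lhd G$; a finite-index subgroup of a one-ended toral relatively hyperbolic group is again one-ended toral relatively hyperbolic. Moreover $G_0$ is centerless, since a nontrivial central element $z$ would give $Z_{G_0}(z)=G_0$, forcing $G_0$ to be elementary (centralizers of nontrivial elements in relatively hyperbolic groups are elementary), contrary to one-endedness. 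So Lemma~\ref{lem:outnormsub}(1) applies and a finite-index subgroup of $\Out(G)$ is a quotient of a subgroup of $\Out(G_0)$ by a finite normal subgroup; as virtual residual $p$-finiteness passes to subgroups, to finite-index overgroups, and (via Lemma~\ref{Gsurfini}) to quotients by finite normal subgroups, it is enough to treat $G=G_0$.

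So let $G$ be one-ended toral relatively hyperbolic with peripheral family $\calp$. Discarding the virtually cyclic members of $\calp$ does not affect one-endedness, so we may assume no $P_i$ is virtually cyclic; then $\Out(G;\calp)$ has finite index in $\Out(G)$, so it suffices to prove $\Out(G;\calp)$ is virtually residually $p$-finite. Consider the canonical elementary JSJ tree $T$ relative to $\calp$ (Subsection~\ref{JSJ}). If $T$ is trivial, then either $G$ is rigid, so $\Out(G;\calp)$ is finite, or $G$ is a closed hyperbolic surface group, in which case $\Out(G)$ is virtually residually $p$-finite by Paris~\cite{Paris} (orientable case) and, in the non-orientable case, by passing to the characteristic centerless orientation double cover and invoking Lemma~\ref{lem:outnormsub}(1). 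Hence we may assume $T$ nontrivial. By Lemma~\ref{reduc} the subgroup $\Out^r(G)=\lambda_{E,QH}(\Pi)$, with $\Pi:=\prod_{v\in V_E\cup V_{QH}}PMCG^\partial(G_v)$, has finite index in $\Out(G;\calp)$; writing $Z$ for the kernel of $\lambda_{E,QH}|_\Pi$, we have $Z\subseteq\calt_\Pi:=\prod_v\calt_{G_v}$ and $\Out^r(G)\cong\Pi/Z$. Since $G$ is torsion-free, each $G_v$ with $v\in V_E$ is free abelian of finite rank, and each $G_v$ with $v\in V_{QH}$ is $\pi_1$ of a compact surface of negative Euler characteristic with $s_v\geqslant 1$ boundary components, these being exactly the incident edges; so each $\calt_{G_v}$ is free abelian of finite rank and $\calt_\Pi$ is free abelian. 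Set $\Pi_p:=\prod_v PMCG^\partial_p(G_v)$; it has finite index in $\Pi$ (Remark~\ref{rem:fg->pmcg_p-fi}) and contains $\calt_\Pi$, hence $Z$, so $\Pi_p/Z$ has finite index in $\Out^r(G)$.

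The crucial observation is that $\calt_\Pi/Z$ is itself free abelian. Indeed $\calt_\Pi/Z$ is the group of twists $\calt$ of $T$, which by Subsection~\ref{autt} is $\prod_{e\in E}Z_{G_{o(e)}}(G_e)$ modulo the vertex and edge relations. Every edge $\varepsilon$ joins an abelian vertex $w\in V_E$ to a non-elementary vertex $u$, so $Z_{G_w}(G_e)=G_w$ while $Z_{G_u}(G_e)=Z(G_e)=G_e$ by maximality of $G_e$ in $G_u$, and $Z(G_u)=\{1\}$ for $u\notin V_E$; the edge relations therefore eliminate the $G_e$-coordinates in favour of the $G_w$-coordinates, and the only surviving vertex relations are the diagonal embeddings $G_w\hookrightarrow G_w^{\deg w}$, so $\calt\cong\prod_{w\in V_E}G_w^{\deg w}/G_w^{\mathrm{diag}}$, which is free abelian. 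Thus the hypothesis of Lemma~\ref{alg2} that $\calt_\Pi/Z$ have no $q$-torsion for $q\neq p$ holds automatically, and I can apply that lemma with $P=\Pi_p$, $P_v=PMCG^\partial_p(G_v)$, $T_v=\calt_{G_v}$, $T=\calt_\Pi$ and the subgroup $Z$: the requirement that $P_v/nT_v$ be residually $p$-finite for all sufficiently large powers $n$ of $p$ is Lemma~\ref{lem:omcg2} at QH vertices (via Lemma~\ref{lem:surf_gps-rp}) and Lemma~\ref{lem:omcg3} at elementary vertices. Hence $Z$ is closed in the pro-$p$ topology of $\Pi_p$, so $\Pi_p/Z$ is residually $p$-finite; being of finite index in $\Out^r(G)$, which is of finite index in $\Out(G;\calp)$, which is of finite index in $\Out(G)$, this shows $\Out(G)$ is virtually residually $p$-finite.

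The main obstacle is the torsion bookkeeping: for mere residual finiteness Lemma~\ref{alg} needed nothing beyond separability, whereas Lemma~\ref{alg2} genuinely needs $\calt_\Pi/Z$ to carry no prime-to-$p$ torsion, and the point where care is required is establishing that in the torsion-free toral situation the group of twists is honestly free abelian — i.e.\ reading off the vertex and edge relations precisely from Subsection~\ref{autt} together with the structure of the cylinder tree in Subsection~\ref{JSJ} (in particular that edge groups are self-centralizing in the non-elementary vertex groups and that elementary vertex groups are abelian). Once this is in place, the remaining technical content is already packaged in Lemmas~\ref{lem:omcg2}, \ref{lem:omcg3} and~\ref{alg2}, exactly paralleling the torsion-free hyperbolic case of Theorem~\ref{rfhyp}.
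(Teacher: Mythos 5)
Your argument follows the paper's own proof very closely: reduce to $G$ itself toral via Lemma~\ref{lem:outnormsub}, pass to the canonical JSJ decomposition and to $\Out^r(G)$ via Lemma~\ref{reduc}, and then feed Lemmas~\ref{lem:omcg2} and \ref{lem:omcg3} into Lemma~\ref{alg2}. The one place you usefully deviate is the verification that $\calt=\calt_\Pi/Z$ has no prime-to-$p$ torsion: the paper simply cites \cite[Cor.\ 4.4]{GL6}, whereas you compute $\calt$ directly from the vertex and edge relations of Subsection~\ref{autt}, using that elementary vertex groups $G_w$ are abelian (so $Z_{G_w}(G_e)=G_w$) and that edge groups are maximal elementary in, hence self-centralizing in, the non-elementary vertex groups (so $Z_{G_u}(G_e)=Z(G_e)=G_e$ and $Z(G_u)=\{1\}$). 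This gives $\calt\cong\prod_{w\in V_E}G_w^{\deg w}/G_w^{\mathrm{diag}}$, visibly free abelian; I checked the computation and it is correct, and it is a nice self-contained substitute for the citation.

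There is, however, a genuine gap in your reduction and base case. You assert that $G_0$ is centerless because a nontrivial central element would force $G_0$ to be elementary, ``contrary to one-endedness.'' But an elementary (parabolic) subgroup can certainly be one-ended: $\Z^n$ with $n\geq 2$ is a one-ended toral relatively hyperbolic group. The correct dichotomy is that $G_0$ is either centerless or abelian, and in the abelian case Lemma~\ref{lem:outnormsub}(1) does not apply since it requires a centerless normal subgroup. Correspondingly, your analysis of the trivial-JSJ case omits the elementary/abelian possibility. The paper handles this separately: if $G_0$ is abelian then $G$ is virtually polycyclic, so $\Out(G)$ embeds in some $GL(n,\Z)\cong\Aut(\Z^n)$ by \cite{Wehr}, which is virtually residually $p$-finite by Lemma~\ref{lem:aut_p-rpf}. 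Once that case is inserted your proof goes through, but as written the claim ``$G_0$ is centerless'' is false and the abelian case is unaddressed.
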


\begin{proof}
First suppose that $G$ itself is   torsion-free and hyperbolic relative to a family $\calp=\{P_1,\dots,P_k\}$ of free abelian groups of finite rank. As in the proof of Corollary \ref{main1},
we can assume that no $P_i\in\calp$ is cyclic,  and restrict to $\Out(G;\calp)$ because it has finite index in $\Out(G)$. Consider the canonical  JSJ tree $T$  relative to $\calp$ over abelian groups as in Subsection \ref{JSJ}.

If $T$ consists of a single point then either $G$ is rigid, or $G$ is the fundamental group of a closed hyperbolic surface $\Sigma$, or $G$ is a finitely generated free abelian
group.
In the first case  $\Out(G)$
is finite (see  \cite {GL6} for instance).
In the second case, if $\Sigma$ is orientable then $\Out_p(G)$ is residually $p$-finite by \cite[Thm.\ 1.4]{Paris},
and if $\Sigma$ is non-orientable, then it possesses an orientable cover $\Sigma'$ of degree $2$. Since the group $\Out(\pi_1(\Sigma'))$ is virtually residually $p$-finite by \cite[Thm.\ 1.4]{Paris},
and $\pi_1(\Sigma')$  is a centerless normal subgroup of finite index of $G$,
we can use Lemmas \ref{lem:outnormsub} and \ref{Gsurfini}  to conclude that $\Out(G)$ is virtually residually $p$-finite. Finally, if $G$ is a free abelian group of finite rank,
then $\Out_p(G)=\Aut_p(G)$ is residually $p$-finite by Lemma \ref{lem:aut_p-rpf}.

Thus we can suppose that the tree $T$ is non-trivial. In this case we know (cf. Lemma~\ref{reduc} and Remark \ref{rem:fg->pmcg_p-fi}) that
$\Out_p^r(G)$, the image by $\lambda_{E,QH}$ of   $\prod _{v\in V_E\cup V_{QH}}PMCG_p^\partial(G_v)$, has finite index in $\Out(G;\calp)$.
We apply Lemma \ref{alg2} to $P=\prod _{v\in V_E\cup V_{QH}}PMCG_p^\partial(G_v)$, with $T_v=\calt_v$.

We  know that each $PMCG_p^\partial(G_v)/n\calt_v$ is residually $p$-finite for $n$ a large power of $p$,   by
Lemmas \ref{lem:omcg2} and \ref{lem:omcg3}. The quotient of $\prod _{v\in V_E\cup V_{QH}}\calt_v$ by $Z=\ker\lambda_{E,QH}$ is the whole group of twists $\calt$ by Lemma \ref{tpre}, it is torsion-free (see Corollary 4.4 of \cite{GL6}).
Thus
$\Out_p^r(G)=P/Z$
is residually $p$-finite by   Lemma \ref{alg2}. It has finite index in $\Out(G)$, so $\Out(G)$ is virtually residually $p$-finite.

Now suppose that $G$ contains a toral relatively hyperbolic group $G_0$ with finite index. We may assume that $G_0$ is normal. If $G_0$ is abelian, then $\Out(G)$ is contained in some $GL(n,\Z)$ by \cite{Wehr},
so it is virtually residually $p$-finite by Lemma \ref{lem:aut_p-rpf}   (as $GL(n,\Z)\cong \Aut(\Z^n)$).  Otherwise $G_0$ has trivial center and we apply   Lemmas \ref{lem:outnormsub} and   \ref{Gsurfini}.
 \end{proof}

\subsection{Groups with infinitely many ends}\label{subs:inf_ends}

In this subsection we   prove Theorem \ref{thm:resp-inf_ends}: \emph{if $G$ is a finitely generated group with infinitely many ends, and $G$ is virtually residually $p$-finite for some
prime number $p$, then $\Out(G)$ is virtually residually $p$-finite}.
 The argument will use the following ``pro-$p$'' analogue of Lemma \ref{lem:cs->rf_out}:

\begin{lem}\label{lem:out-resp_crit}
Let $p$ be a prime. Given a finitely generated group $G$ and $\alpha\in\Aut_p(G)$, suppose that there is a homomorphism $\psi:G \to K$ with $K$ a
finite $p$-group such that $\psi(g)$ is not conjugate to $\psi(\alpha(g))$ in $K$. Then there is a
homomorphism $\phi: {\Out_p}(G) \to L$ with $L$ a finite $p$-group such that
$\phi(\hat\alpha) \neq 1$ in $L$, where $\hat \alpha$ denotes the image of $\alpha$ in $\Out_p(G)$.
\end{lem}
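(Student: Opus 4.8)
The plan is to mimic the proof of Lemma \ref{lem:cs->rf_out}, but to work with the pro-$p$ topology: instead of a characteristic subgroup of finite index I would use a characteristic subgroup of $p$-power index, and instead of $\Out$ I would pass to $\Out_p$, so that P.\ Hall's theorem (Lemma \ref{lem:Hall}) makes the target a finite $p$-group.

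First I would note that $\ker\psi$ is normal of $p$-power index in $G$, since $G/\ker\psi$ embeds in the finite $p$-group $K$. As $G$ is finitely generated, Remark \ref{car} then yields a characteristic subgroup $N\lhd G$ of $p$-power index with $N\le\ker\psi$. Put $Q:=G/N$, a finite $p$-group, let $\varphi:G\to Q$ be the quotient map, and note that $\psi$ factors as $\psi=\bar\psi\circ\varphi$ for some $\bar\psi:Q\to K$.

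Next, since $N$ is characteristic there is the usual induced homomorphism $\theta:\Aut(G)\to\Aut(Q)$, $\theta(\beta)(hN)=\beta(h)N$. The one point that needs checking is that $\theta$ carries $\Aut_p(G)$ into $\Aut_p(Q)$: this holds because the image of $K_p(G)=[G,G]G^p$ in $Q$ lies inside $K_p(Q)=[Q,Q]Q^p$, so if $\beta(h)h^{-1}\in K_p(G)$ for all $h$ then $\theta(\beta)(hN)(hN)^{-1}\in K_p(Q)$ for all $h$. Since moreover $\theta(\Inn(G))\subseteq\Inn(Q)$, the restriction of $\theta$ descends to a homomorphism $\phi:\Out_p(G)\to \Out_p(Q)=:L$; and $L$, being a quotient of $\Aut_p(Q)$, is a finite $p$-group by Lemma \ref{lem:Hall}.

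Finally I would verify $\phi(\hat\alpha)\ne1$, i.e.\ that $\theta(\alpha)$ is not inner in $Q$. If it were, then $\varphi(g)$ and $\theta(\alpha)(\varphi(g))=\varphi(\alpha(g))$ would be conjugate in $Q$, hence $\psi(g)=\bar\psi(\varphi(g))$ and $\psi(\alpha(g))=\bar\psi(\varphi(\alpha(g)))$ would be conjugate in $K$, contradicting the hypothesis. I do not expect any real obstacle: the argument is a direct transcription of Lemma \ref{lem:cs->rf_out}, and the only genuinely new ingredients are the existence of a characteristic subgroup of $p$-power index (Remark \ref{car}) and the fact that $\Aut_p$ of a finite $p$-group is a $p$-group (Lemma \ref{lem:Hall}); the only thing requiring a moment's thought is that $\theta$ respects the passage from $\Aut$ to $\Aut_p$.
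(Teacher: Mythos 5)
Your proof is correct and takes essentially the same route as the paper: the paper's proof simply says to repeat the argument of Lemma \ref{lem:cs->rf_out} with $\Aut_p$, $\Out_p$, a characteristic subgroup of $p$-power index (Remark \ref{car}), and Lemma \ref{lem:Hall}. You have spelled out the one point the paper leaves implicit, namely that the induced map $\theta$ carries $\Aut_p(G)$ into $\Aut_p(Q)$, which is a welcome clarification but not a different approach.
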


\begin{proof} The proof is almost identical to the proof of Lemma \ref{lem:cs->rf_out}, except one needs to use $\Aut_p$ and $\Out_p$ instead of $\Aut$ and $\Out$, together with the fact that
$\Out_p(H)$ is a finite $p$-group for any finite $p$-group $H$, which immediately follows from Lemma \ref{lem:Hall}.
\end{proof}

\begin{proof}[Proof of Theorem \ref{thm:resp-inf_ends}] Recall that by Stallings' theorem for groups with infinitely many ends \cite{Stallings},
the group $G$ splits as an amalgamated product or as an HNN-extension over a finite subgroup $C \leqslant G$.
Since $G$ is virtually residually $p$-finite we can find a finite index normal subgroup $H\lhd G$ such that $H \cap C=\{1\}$ and $H$ is residually $p$-finite.
It follows from the generalized  Kurosh theorem  (cf.\ \cite[I.7.7]{DD}   or \cite[Thm.\  8.27]{Cohen-book})
that $H=A*B$, where $A$ and $B$ are non-trivial
finitely generated residually $p$-finite groups. Note that $H$ has trivial center (as does any non-trivial free product), and so by Lemmas~\ref{lem:outnormsub}
and \ref{Gsurfini} it is enough to prove that $\Out(H)$ is virtually residually $p$-finite.

Observe that $H$ is hyperbolic relative to $\{A,B\}$ and consider any automorphism $\alpha \in \Aut_p(H) \setminus\Inn(H)$. Again, since $H$ splits as a non-trivial free product, $H$ contains no non-trivial finite normal subgroups, hence
$E(H)=\{1\}$. Therefore, according to Lemma~\ref{lem:comm_aut_def}, there exists $g \in H$ such that
both $g$ and $h:=\alpha(g)$ are loxodromic in $H$ and $g$ is not commensurable with $h$ in $H$. Since $A$ and $B$ are residually $p$-finite, applying Lemma \ref{lem:non-comm},
we can find normal subgroups $A'\lhd A$ and $B'\lhd B$ such that
$A_1:=A/A'$ and $B_1:=B/B'$ are finite $p$-groups and the images of $g$ and $h$ are  non-commensurable in the free product $H_1:=A_1*B_1$.

  We claim that $H_1$ is conjugacy $p$-separable, i.e.,  given two non-conjugate $h,h'\in H$ there exist a finite $p$-group $K$ and a homomorphism $\xi:H_1 \to K$ such that
$\xi(h)$ is not conjugate to $\xi(h')$ in $K$. Indeed, by the Kurosh subgroup theorem, the kernel of the natural map $A_1*B_1 \to A_1 \times B_1$ is free,
thus $H_1$ is an extension of a finitely generated free group by the finite $p$-group $A_1 \times B_1$. Hence, by a theorem of E. Toinet \cite[Thm.\   1.7]{Toinet}, $H_1$ is conjugacy $p$-separable
(in fact, the full strength  of Toinet's result is not needed here: conjugacy $p$-separability of free products of finite $p$-groups can be derived from the conjugacy $p$-separability of
the free group via a short argument, similar to the one used by V. Remeslennikov in \cite[Thm.\ 2]{Rem}).

Let $\eta: H \to H_1$ denote the natural homomorphism with
$\ker \eta=\llangle A',B'\rrangle^H$.
Let $\psi:=\xi \circ \eta:H \to K$. Then $\psi(h)=\psi(\alpha(g))$ is not conjugate to $\psi(g)$ in $K$ by construction.
Therefore by Lemma \ref{lem:out-resp_crit} there is a finite $p$-group $L$ and a homomorphism
$\phi:\Out_p(H) \to L$ such that $\phi(\hat\alpha) \neq 1$ in $L$, where $\hat \alpha$
is the image of $\alpha$ in
$\Out_p(H)$. Thus we have shown that $\Out_p(H)$
is residually $p$-finite. Since $H$ is finitely generated, $|\Out(H):\Out_p(H)|<\infty$, and so $\Out(H)$ is virtually residually $p$-finite, as required.
\end{proof}

Before concluding let us discuss one application of Theorem \ref{thm:resp-inf_ends}.
In a recent paper \cite{A-F} Aschenbrenner and Friedl proved that the fundamental group of any compact $3$-manifold $M$ is residually $p$-finite for all but finitely many
primes $p$. Recalling Lubotzky's theorem \cite[Prop.\  2]{Lub},  they derived that $\Aut(\pi_1(M))$ is virtually residually $p$-finite and mentioned that the similar fact for $\Out(\pi_1(M))$
is not yet known.  Theorem~\ref{thm:resp-inf_ends} implies that if a compact orientable $3$-manifold  $M$ is not irreducible, then
$\Out(\pi_1(M))$ is virtually residually $p$-finite (for all but finitely many primes $p$). Indeed, since $M$ is not irreducible, either it is  $\mathbb{S}^2\times\mathbb{S}^1$ or it
decomposes into a connected sum of prime manifolds. In the former case $\pi_1(M) \cong \mathbb{Z}$, and in the latter case $\pi_1(M)$ splits as a non-trivial free product.
Thus either $\Out(\pi_1(M))$ is finite, or $\pi_1(M)$ has infinitely many ends, and so $\Out(\pi_1(M))$ is virtually residually $p$-finite for all but finitely many $p$'s
by Theorem~\ref{thm:resp-inf_ends} (using the result of Aschenbrenner and Friedl \cite{A-F} mentioned above).
Therefore, in order to prove that $\Out(\pi_1(M))$ is virtually residually $p$-finite  for all compact orientable $3$-manifolds $M$, it is enough to consider only irreducible manifolds.

As a finishing remark, one can recall the theorem of Rhemtulla \cite{Rhem} stating that if a group is residually $p$-finite for infinitely many primes $p$, then it is bi-orderable. Unfortunately our methods do not
allow to deduce that $\Out(G)$ has a single finite index subgroup which is residually $p$-finite for infinitely many $p$'s. This is because we rely on Lemma \ref{lem:Hall}, requiring one to pass
to the subgroup $\Out_p(G)$, the index of which generally depends on the prime $p$.

\small

\end{document}